\newcommand{\commentout}[1]{}
\numberwithin{equation}{section}
 \newtheoremstyle{numberedstyle}
   {9pt}
   {9pt}
   {\normalfont}
   {}
   {\bfseries}
   {.}
   {\newline}
   {}
\newcommand{\HH}{\mathbb{H}}
\newcommand{\V}{\mathcal{V}}
\newcommand{\modd}[1]{|{\rm d}#1|}
\newcommand{\area}{\operatorname{area}}
\newcommand{\const}{\operatorname{const}}
\newcommand{\areacyl}{\area_{\operatorname{cyl}}}
\theoremstyle{changebreak}
\newtheorem{thm}{Theorem}[section]%
\newtheorem{lem}[thm]{Lemma}%
\newtheorem{cor}[thm]{Corollary}%
\newtheorem{prop}[thm]{Proposition}%
\newtheorem{conj}[thm]{Conjecture}%
\newtheorem{obs}[thm]{Observation}%
\newtheorem{proposition}[thm]{Proposition}
\newcommand{\edim}{\operatorname{edim}}
\theoremstyle{defnbreak}
\newtheorem{question}[thm]{Question}%
\newtheorem{defn}[thm]{Definition}%
\newcommand{\B}{\mathcal{B}}
\newcommand{\classS}{\mathcal{S}}
\newcommand{\id}{\operatorname{id}}
\title[Invariance of order and the area property]{On invariance of order  and the area property \\ for finite-type entire functions}
\author{Adam Epstein}
\address{Mathematics Institute, University of Warwick, Coventry CV4 7AL, UK}
\email{a.l.epstein@warwick.ac.uk}
\author{Lasse Rempe-Gillen}
\address{Dept.\ of Mathematical Sciences, University of Liverpool, Liverpool L69 7ZL, UK}
\email{l.rempe@liverpool.ac.uk}
\thanks{The second author was supported by EPSRC Fellowship EP/E052851/1 and a Philip Leverhulme Prize.}
\subjclass[2010]{Primary 30D20; Secondary 30D05,30D15,30D35}
\renewcommand{\P}{\mathcal{P}}
\newcommand{\hyp}{\operatorname{hyp}}
\begin{document}

\begin{abstract}Let $f\colon\C\to\C$ be an entire function that has only finitely many critical and asymptotic values. Up to \emph{topological equivalence}, the
  function $f$ is determined by combinatorial information, more precisely by an infinite graph known as a \emph{line-complex}. In this note,
  we discuss the natural question whether the order of growth of an entire function is determined by this combinatorial information.

 The search for conditions that imply a positive answer to this question leads us to the 
   \emph{area property}, which turns out to be related to many interesting and important questions
    in conformal dynamics and function theory. These include a conjecture of Eremenko and Lyubich, the measurable dynamics of entire functions, and
    pushforwards of quadratic differentials.

  We also discuss evidence that invariance of
   order and the area property fail in general.
\end{abstract}
\maketitle

 \section{Introduction} 

The \emph{order} $\rho(f)$ of a meromorphic function $f\colon\C\to \Ch$ is an
  important quantity in classical value-distribution theory
  \cite{nevanlinna}.
  In the special case where $f\colon\C\to\C$ is an entire function, 
   the order can be defined as 
\[
  \rho(f) \defeq \limsup_{z\to\infty}\frac{\log_+\log_+ |f(z)|}{\log |z|}
    \in [0,\infty]  
\]
 (where $\log_+ x = \max(0,\log x)$). 
Any polynomial or rational function has order $0$,
but there are also many transcendental entire and meromorphic functions with this
property. On the other hand, the maximum modulus of an entire function 
 can grow arbitrarily quickly; in particular, there are many functions
 of \emph{infinite order}. 

The set $S(f)$ of \emph{singular values} of an entire function $f$ is the 
  smallest closed set $S\subset\C$ such that
   \[ f\colon f^{-1}(\C\setminus S)\to \C\setminus S\]
 is a covering map; equivalently, $S(f)$ is the closure of the set of all critical and asymptotic values of $f$. 
 This set is of vital importance for both the function-theoretical 
  and iterative study of transcendental entire (and meromorphic)
  functions.

It is a guiding principle
 of both 1-dimensional holomorphic dynamics and 3-dimen\-sional hyperbolic
 geometry that combinatorics determines geometry, under suitable
 finiteness assumptions. In this note, we consider a potential extension of this principle to value-distribution 
 theory that was first proposed by the first author over fifteen years ago, but has not so far been discussed in print. 
 The natural setting for our considerations is the \emph{Speiser class} 
    \[ \classS \defeq \{f\colon \C\to\C \text{ transcendental, entire}\colon S(f)\text{ is finite}\}, \]
 which has been extensively studied both in dynamics and function theory. We shall also refer to such functions 
  as \emph{finite type maps}, in adopting terminology standard in holomorphic dynamics. We caution that the word 
\emph{type} has an entirely different meaning in value-distribution theory. 
   It is well-known that every function $f\in\classS$ has $\rho(f)\geq 1/2$; 
    compare~\cite[Proof of Corollary~2]{walteralexsingularities} or 
    \cite[Lemma~3.5]{ripponstallardmeromorphicdimensions}.
 
To a function $f\in\classS$, one can associate an infinite planar graph, known as the \emph{line-complex}, which encodes the topological mapping
  behaviour of $f$.\footnote{
{
We remark that the line-complex is uniquely defined only if one additionally fixes a suitable marking, which can be represented 
 by a Jordan curve through the singular values.}}
{%
   From this combinatorial data, 
  one can reconstruct the function $f$, up to pre-\ and post-composition by
   homeomorphisms. That is, two functions $f,g\in\classS$ 
   have the same line-complex if and only if they are \emph{topologically equivalent}
   in the sense of Eremenko and Lyubich:}

 \begin{defn}[Topological equivalence] \label{defn:topequ}
  Two entire functions $f$ and $g$ are called \emph{topologically equivalent} if there are order-preserving homeomorphisms $\phi$ and $\psi$ such that
    $\psi\circ f = g\circ \phi$.
 \end{defn}

{
  For this reason, we shall not require the formal definition of line-complexes~--
   for which we refer the reader to \cite[Chapter~7]{goldbergostrovskii}~--
   but will instead use the notion of topological equivalence, 
    which is easier both to define
    and to work with in our context. (See Section~\ref{sec:examples} for a discussion of 
    the properties of topological equivalence.)}

 It is natural to ask which properties of $f$ are combinatorially determined, and, in particular, whether this is the case for the order:

 \begin{question}[Invariance of order] \label{question:order}
   Let $f\in\classS$ with $\rho(f)<\infty$, and let $g$ be topologically equivalent to $f$. Is $\rho(f)=\rho(g)$?
 \end{question}

\smallskip

 For transcendental \emph{meromorphic} functions, the 
order is given by 
\[
\rho (f)=\limsup_{r\rightarrow \infty }\frac{\log T(r,f)}{\log r} 
\]
where $T(r,f)$ is the \emph{Nevanlinna characteristic} of $f$. (We 
  emphasize that knowledge of Nevanlinna theory will not be required for the
  remainder of the paper.) 
  Question \ref{question:order} is partly motivated by the fact
  that the answer is positive in an important, classical
  case: that of meromorphic functions with
   rational Schwarzian derivative. {Indeed, 
   in this case the order can be directly determined from the combinatorial information
   of the function~-- more precisely, $\rho(f)=\ell/2$, where $\ell$ is the number of \emph{logarithmic singularities}~-- and hence is indeed invariant. (See 
   Corollary \ref{cor:rationalschwarzianinvariant} and the discussion that precedes it.)}
  {More generally, there are a number of classical subclasses of $\classS$ that were defined in terms
    of the structure of their line-complexes  (e.g.\ maps with finitely many \emph{simply- and doubly periodic ends} \cite{wittich}, 
    and more generally \emph{asymptotically periodic ends}
   \cite{goldbergostrovskii}). In these cases, it seems to have been implicitly understood that the order depends only on the 
     line complex, and hence that invariance of order holds for these classes.
  However,  the answer to Question \ref{question:order} for
  \emph{general} meromorphic finite-type functions is  negative 
   by work of K\"unzi \cite{kunziorder}.}

In addition to the Speiser class, the larger \emph{Eremenko-Lyubich class}
   \[ \B \defeq \{f\colon \C\to\C \text{ transcendental, entire}\colon S(f)\text{ is bounded}\} \]
 has also been studied extensively in complex dynamics. 
 In this class, there do exist cases where the order is not
 invariant under topological equivalence. 

\begin{thm}[Counterexamples in class $\B$] \label{thm:changeinB}
 There exist two finite-order functions {$f,g\in\B$} such that
  {$f$ and $g$} are topologically equivalent, but {$\rho(f)\neq \rho(g)$}. 
\end{thm}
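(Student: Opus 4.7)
The plan is to use quasiconformal surgery to realize two topologically equivalent entire functions as different qc deformations of a single combinatorial model, exploiting the freedom afforded by the possibly infinite singular set in the class $\B\setminus\classS$.

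Given $f\in\B$ and any Beltrami coefficient $\nu$ on $\C$ with $\|\nu\|_\infty<1$, I would construct a topologically equivalent entire function $g$ as follows. Let $\psi\colon\C\to\C$ be the normalized quasiconformal homeomorphism with complex dilatation $\nu$, and let $\phi\colon\C\to\C$ be the normalized quasiconformal homeomorphism with complex dilatation $\mu:=f^{*}\nu$. Then $g:=\psi\circ f\circ\phi^{-1}$ is entire: pulling back Beltrami coefficients through the composition, the dilatation of $g$ vanishes identically. The defining identity $\psi\circ f=g\circ\phi$ witnesses topological equivalence, and $S(g)=\psi(S(f))$, so $g\in\B$ whenever $f\in\B$.

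To force $\rho(g)\neq\rho(f)$, the coefficient $\nu$ must produce nontrivial asymptotic stretching at infinity. When $f\in\classS$, one expects (consistently with Question~\ref{question:order}) that this is impossible, since the finiteness of $S(f)$ prevents $f^{*}\nu$ from accumulating dilatation at infinity in any meaningful way. For $f\in\B\setminus\classS$, however, one can take a function whose singular values form a bounded infinite set, and choose $\nu$ with dilatation concentrated on small disks around each singular value. The pullback $f^{*}\nu$ then has dilatation on a sequence of preimage disks accumulating at infinity along the tracts of $f$; with a suitable model $f$, the normalized qc homeomorphism $\phi$ exhibits asymptotic power-law stretching of some exponent $\alpha\neq 1$, and consequently $\rho(g)=\alpha\cdot\rho(f)$.

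The hard part will be the quantitative verification at the end. One must select the model $f\in\B\setminus\classS$ with tract geometry regular enough that the classical qc distortion theory (Teichm\"uller--Wittich, or Ahlfors' distortion theorem) can control the behavior of $\phi$ at infinity, and then translate the power-law stretch of $\phi^{-1}$ into a genuine change of order of $g$, typically via a Phragm\'en--Lindel\"of argument inside the logarithmic tracts. Care is needed to rule out an accidental compensation between $\psi$ and $\phi^{-1}$ that would cancel the stretching and preserve $\rho(f)=\rho(g)$; this forces certain normalization conventions and constrains the precise geometry of the support of $\nu$.
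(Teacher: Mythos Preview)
Your proposal outlines a plausible framework but is not a proof: the entire content lies in what you call ``the hard part'', and you do not carry it out. You do not exhibit a specific $f\in\B$, a specific Beltrami coefficient $\nu$, or any verification that the resulting $\phi$ has nontrivial asymptotic stretching. The obstacle is real: by the Teichm\"uller--Wittich theorem, if the support of $f^*\nu$ has finite cylindrical area then $\phi$ is asymptotically conformal and the order is preserved. So you would need to engineer $f$ and $\nu$ so that this area is infinite \emph{and} so that the resulting $\phi$ has a definite power-law stretch rather than some irregular behaviour; nothing in your sketch indicates how to do either, and the phrase ``with a suitable model $f$'' is doing all the work.

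The paper avoids this analytic difficulty entirely by a concrete construction. Take a one-parameter family of quadratic polynomials $p_a(z)=az+z^2$ with an attracting fixed point (say $0<|2-a|<1$); any two such maps are quasiconformally conjugate by classical surgery on the attracting basin. A qc conjugacy between $p_a$ and $p_b$ lifts, via the functional equation, to a qc equivalence between their Poincar\'e functions $f_a$ and $f_b$ at the repelling fixed point $0$. These Poincar\'e functions lie in $\B$ (since $S(f_a)=\P(p_a)$ is bounded when the Julia set is connected), and their order is given \emph{explicitly} by $\rho(f_a)=\log 2/\log|a|$. Choosing $|a|\neq|b|$ finishes the proof with no asymptotic estimates needed. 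Your abstract surgery does underlie this picture, but the point is that the Poincar\'e-function formalism supplies both the example and the order computation for free.
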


{These examples arise from complex dynamics; 
   more precisely they are given by
  \emph{Poin\-car\'e functions} (maximally extended linearizing maps) associated to the repelling 
   periodic cycles of polynomials. 
   As we shall see, for a polynomial with connected Julia set, these Poincar\'e functions belong to 
  the Eremenko-Lyubich class. Furthermore,
   an orientation-preserving topological conjugacy between polynomials induces topological 
   equivalences between corresponding Poincar\'e functions
    (Proposition \ref{prop:linearizerstop}).
   On the other hand, in this situation the order is determined by the multiplier of the associated 
   periodic cycle, which may change under a topological
   conjugacy; see Corollary~\ref{cor:changeinB}.}

 Note that this construction cannot be extended to yield counterexamples
  to invariance of order in the class $\classS$. Indeed, a Poincar\'e function
  is in $\classS$ if and only if the corresponding polynomial
  is \emph{postcritically finite}, but postcritically finite maps
  are rigid by the Thurston Rigidity Theorem  \cite{douadyhubbardthurston}. We are able to 
  give a purely function-theoretic explanation of this phenomenon by considering
  an important geometric property for entire functions.

\subsection*{The area property}
 The following result, which is a consequence
  of the
   well-known Teichm\"uller-Wittich Theorem, will allow us to 
  verify invariance of order for certain functions $f\in\classS$. 
 \begin{thm}[Invariance of order and the area property] \label{thm:area}
   Let $f\in\classS$, and suppose that 
    \begin{equation}\label{eqn:area}
       \int_{f^{-1}(K)\setminus \D} \frac{\dif x \dif y}{|z|^2} < \infty 
    \end{equation}
  for every compact set $K\subset \C\setminus S(f)$. 

  Then the order of $f$ is invariant under topological equivalence.
   (Here $\D=\{|z|<1\}$ denotes the unit disc.)
 \end{thm}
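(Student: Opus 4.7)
The plan is to upgrade the topological equivalence to a quasiconformal one with controlled dilatation at infinity, and then to invoke the Teichm\"uller--Wittich--Belinski theorem. Suppose $g$ is topologically equivalent to $f$ via $(\phi_0,\psi_0)$, so that $\psi_0(S(f))=S(g)$. Since each of $S(f)$ and $S(g)$ is finite, one can construct an orientation-preserving quasiconformal homeomorphism $\psi\colon\mathbb{C}\to\mathbb{C}$ which coincides with a translation on a small disk $U_i$ around each $s_i\in S(f)$ (sending it to the appropriate $t_i\in S(g)$), equals the identity outside some large disk $D_R$, and realises the same combinatorial bijection on singular values as $\psi_0$. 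A standard covering-space lifting argument (using that $f$ and $g$ restrict to coverings of the respective punctured planes and are topologically equivalent) then produces a unique orientation-preserving quasiconformal homeomorphism $\phi$ satisfying $\psi\circ f=g\circ\phi$, whose Beltrami coefficient is the $f$-pullback of $\mu_\psi$. In particular $|\mu_\phi(z)|=|\mu_\psi(f(z))|$ almost everywhere, so the support of $\mu_\phi$ is contained in $f^{-1}(K)$, where $K\defeq \overline{D_R}\setminus\bigcup_i U_i$ is a compact subset of $\mathbb{C}\setminus S(f)$.

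The area hypothesis, applied to this $K$, then gives
\[
\int_{\{|z|>1\}} \frac{|\mu_\phi(z)|}{|z|^2}\,\dif x\,\dif y \;\leq\; \|\mu_\psi\|_\infty \int_{f^{-1}(K)\setminus\D} \frac{\dif x\,\dif y}{|z|^2} \;<\; \infty.
\]
By the Teichm\"uller--Wittich--Belinski theorem, any quasiconformal self-homeomorphism of $\mathbb{C}$ fixing $\infty$ whose Beltrami coefficient satisfies this integrability condition is \emph{conformal at infinity}: the limit $c\defeq \lim_{z\to\infty}\phi(z)/z$ exists and is a nonzero complex number. In particular $|\phi(z)|=(|c|+o(1))\,|z|$ as $z\to\infty$, and the analogous asymptotic holds for $\phi^{-1}$ with constant $1/|c|$.

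Since $\psi$ is the identity outside $D_R$, the relation $g\circ\phi=\psi\circ f$ reduces to $g(\phi(z))=f(z)$ whenever $|f(z)|>R$. Taking maxima over the circle $|w|=r$ and substituting $w=\phi(z)$, one obtains
\[
\log M(r,g) \;=\; \log M\bigl((1+o(1))\,r/|c|,\,f\bigr) + O(1)\qquad(r\to\infty).
\]
Taking logarithms once more, dividing by $\log r$, and passing to the $\limsup$ gives $\rho(g)=\rho(f)$.

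The principal technical obstacle is the first step: arranging $\psi$ to be quasiconformal with support disjoint from $S(f)$ while preserving the combinatorics, and verifying that the lift $\phi$ is globally quasiconformal across the critical points of $f$. The latter follows from a local analysis of the branched cover $f(z)=z^d$ in suitable coordinates; the former is a quasiconformal interpolation argument standard in class $\classS$. Once this is in place, the verification of the integrability condition, the invocation of Teichm\"uller--Wittich--Belinski, and the comparison of maximum moduli are each routine.
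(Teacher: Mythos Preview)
Your proof is correct and follows essentially the same route as the paper: replace the witnessing pair by a quasiconformal pair $(\phi,\psi)$ with $\mu_\psi$ supported on a compact $K\subset\C\setminus S(f)$, note that $\mu_\phi$ is then supported on $f^{-1}(K)$, apply the area hypothesis together with Teichm\"uller--Wittich to make $\phi$ asymptotically conformal, and read off $\rho(f)=\rho(g)$. The paper packages the construction of $\psi$ abstractly (Observation~\ref{obs:isotopy} plus the isotopy-lifting of Proposition~\ref{prop:topequ}\ref{item:isotopy}) and the final comparison via Proposition~\ref{prop:topequ}\ref{item:asconforder}, whereas you build $\psi$ explicitly as the identity near $\infty$ and compare maximum moduli directly; one small caution is that ``realises the same combinatorial bijection'' must mean \emph{isotopic to $\psi_0$ rel $S(f)$}, not merely matching the bijection on singular values, for the lift $\phi$ to exist.
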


 The condition (\ref{eqn:area}) means that the \emph{cylindrical area}
    $\areacyl(f^{-1}(K)\setminus \D)$~-- 
  i.e.\ area with respect to the conformal metric $\dif s=\modd{z}/|z|$ on the
   punctured plane $\C^*$~-- is finite.
 Note that this condition 
  makes perfect sense not just for a function $f\in\classS$, but also for general 
  entire functions $f$, and in particular for 
    $f\in\B$:
 \begin{defn}[The area property]\label{defn:areaproperty}
   We say that an entire function $f$ has the \emph{area property} 
     if~\eqref{eqn:area} holds 
  for every compact set $K\subset \C\setminus S(f)$. 

  If $f\in\B$ and this property holds for every compact subset of the unbounded connected component of $\C\setminus S(f)$, we say that
   $f$ has the area property \emph{near infinity}.
 \end{defn}

 The area property and some variants thereof appear to be closely connected to
   a number of interesting questions in complex function theory and complex dynamics. In particular, a similar question was stated by 
   Eremenko and Lyubich:

 \begin{conj}[Detection of asymptotic values{\cite[p.\ 1009]{alexmisha}}] \label{conj:eremenkolyubich}
   Suppose that $f\in\classS$ is such that, for some $R>0$, 
       \begin{equation}\label{eqn:eremenkolyubich}
    \liminf_{r\to\infty} \frac{1}{\log r} \int_{\{z\in\C\colon 1\leq |z|\leq r\text{ and }|f(z)|\leq R\}} \frac{\dif x \dif y}{|z|^2} > 0.  \end{equation} 
   Then $f$ has a finite asymptotic value.
 \end{conj}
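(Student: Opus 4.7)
The plan is to argue by contrapositive: assume $f\in\classS$ has no finite asymptotic value, and show that for every $R>0$ the liminf in~\eqref{eqn:eremenkolyubich} equals zero.

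The first step is a reduction to bounded components. Choose $R'>R$ with $\partial D_{R'}\cap S(f)=\emptyset$; this is possible because $S(f)$ is finite. The claim is that every component of $f^{-1}(\overline{D_{R'}})$ is bounded. If instead $W$ were an unbounded component, then for each $n$ one can select an unbounded connected component $W_{n}$ of $W\cap\{|z|>n\}$ and string together paths through points $z_{n}\in W_{n+1}\subset W_n$, producing a continuous curve $\gamma\colon[0,\infty)\to W$ with $|\gamma(t)|\to\infty$ on which $|f|\le R'$; a standard shrinking argument, using that $f\colon f^{-1}(\C\setminus S(f))\to\C\setminus S(f)$ is a covering (so local inverses exist away from $S(f)$) together with Fatou boundary values on a simply connected unbounded subdomain of $W$, then produces a refined asymptotic path on which $f$ tends to some $a\in\overline{D_{R'}}$, contradicting the standing hypothesis. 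Hence every component of $\{|f|\leq R\}$ is bounded.

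Next I pass to logarithmic coordinates, writing $F(\zeta)=f(e^{\zeta})$, which is $2\pi i$-periodic. Since $|\dif z|^{2}/|z|^{2}=|\dif\zeta|^{2}$, the integral in~\eqref{eqn:eremenkolyubich} equals the Euclidean area of $\{\zeta\colon 0\leq\operatorname{Re}\zeta\leq\log r,\ |F(\zeta)|\leq R\}$ inside a fundamental strip of width~$2\pi$. By the previous step this set is a disjoint union of components $C_{\alpha}$ bounded in the $\operatorname{Re}\zeta$-direction. Discarding the finitely many components that contain critical points of $F$, the Koebe distortion theorem applied to the conformal map $F\colon C_{\alpha}\to D_{R}$ gives the pointwise bound $\area(C_{\alpha})\leq c\,|F'(\zeta_{\alpha})|^{-2}$, where $\zeta_{\alpha}\in C_{\alpha}$ is the preimage of~$0$ and $c$ depends only on $R$ and~$R'$.

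The main obstacle, and the reason this problem has remained a conjecture, is to show that
\[
\sum_{0\leq\operatorname{Re}\zeta_{\alpha}\leq T}\;|F'(\zeta_{\alpha})|^{-2}\;=\;o(T)
\]
along a subsequence $T\to\infty$. This is a quantitative statement about the joint distribution of the zeros of $F$ together with the derivative values there, and it appears to require a genuinely new ingredient. Candidate tools include a refined Ahlfors length-area inequality adapted to the covering structure of $F$ on the logarithmic cylinder; a Nevanlinna-theoretic bound relating the characteristic $T(r,f)$ to the Euclidean sizes of the lakes $\{|f|\leq R\}$; or a combinatorial argument exploiting the line complex, translating the absence of finite asymptotic values into a structural constraint on the unbounded faces and thence into the desired area estimate. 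I expect this final estimate to be the decisive and most delicate part of the argument, and I do not see how to carry it out using only the tools sketched above.
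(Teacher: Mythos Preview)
The statement you are trying to prove is a \emph{conjecture}, not a theorem, and the paper does not prove it. On the contrary, the paper \emph{disproves} it: Proposition~\ref{thm:eremenkolyubichcounterexample} exhibits a function $f\in\classS$ (the Poincar\'e function of $p(z)=z^2-1$ at the fixed point $(1+\sqrt 5)/2$) that has no finite asymptotic values but nonetheless satisfies~\eqref{eqn:eremenkolyubich}. So any purported proof of the conjecture must contain an error; the only question is where.

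Your argument already fails at the first step. You assert that if $f\in\classS$ has no finite asymptotic value then every component of $f^{-1}(\overline{D_{R'}})$ is bounded, but this is false for the counterexample above. Indeed, write $\lambda$ for the multiplier at the fixed point; the functional equation $f(\lambda z)=p(f(z))$ shows that $f^{-1}(K(p))$ is invariant under $z\mapsto\lambda z$, so the component of $f^{-1}(K(p))$ containing~$0$ is sent to itself by this dilation and is therefore unbounded. Taking $R'$ large enough that $K(p)\subset\overline{D_{R'}}$ gives an unbounded component of $f^{-1}(\overline{D_{R'}})$, yet $f$ has no finite asymptotic values. The ``standard shrinking argument'' you invoke does not apply: a curve to infinity on which $|f|$ is bounded does not by itself produce an asymptotic value, and the covering/Iversen-type reasoning you sketch breaks down because this unbounded component contains infinitely many critical points (of unbounded multiplicity, as noted in the remark following Proposition~\ref{thm:eremenkolyubichcounterexample}). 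Consequently, the later Koebe step, which treats the components $C_\alpha$ as conformal images of a disc, is also unjustified. The ``main obstacle'' you correctly identify as decisive is in fact insurmountable, because the conclusion is false.
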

 In other words, suppose that $f\in\classS$ has no finite asymptotic values. Then Conjecture~\ref{conj:eremenkolyubich} 
   {would imply that the part of the 
   logarithmic area of $f^{-1}(K)\setminus \D$ at modulus at most $r$ does not grow too quickly (although the total logarithmic area is allowed
    to be infinite, in contrast to the area property)}. Conversely, suppose that the area property holds for $f$, and that 
   additionally the multiplicity of the critical
   points of $f$ is uniformly bounded. Then the integral in (\ref{eqn:eremenkolyubich}) is bounded,
   and hence $f$ satisfies Conjecture \ref{conj:eremenkolyubich} (see Lemma~\ref{lem:boundedcriticality}).

 The area property is often easy to verify, allowing us to establish a positive answer to Question \ref{question:order} in such cases. 
  In particular, we can show that the Poincar\'e function $f$ for a polynomial $p$ 
  with connected Julia set typically
  has the area property, even when invariance of order fails. Recall that, as mentioned above, such $f$ belongs to the class $\B$
   and has finite order (see~\eqref{eqn:logD} below). 
 \begin{thm}[The area property for linearizers] \label{thm:linearizers}
   Let $f\in\B$ be the Poincar\'e function associated to a repelling periodic 
   point of a polynomial $p$ with connected Julia set.
   Then $f$ has the area property near infinity. Furthermore, 
    $f$ has the area property if and only if 
    $p$ does not have any Siegel discs.

  In particular, if $p$ is postcritically finite, then $f\in\classS$ and the order of $f$ is invariant under topological equivalence. 
 \end{thm}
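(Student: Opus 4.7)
The plan is to exploit Schr\"oder's functional equation. After replacing $p$ by an appropriate iterate we may assume $\alpha := f(0)$ is a repelling fixed point of $p$ with multiplier $\lambda$, so $f(\lambda z) = p(f(z))$. Because $p$ has connected Julia set, all critical values of $p$ lie in $K(p)$, and the standard description of $S(f)$ for linearizers puts both critical and asymptotic values of $f$ inside $K(p)$; hence the unbounded component of $\C \setminus S(f)$ contains $A(\infty)$. The central observation is that $\dif x \dif y / |z|^2$ is invariant under $z\mapsto \lambda z$: decomposing $\{|z|\geq 1\}$ into the dynamical annuli $A_m := \{|\lambda|^m \leq |z| < |\lambda|^{m+1}\}$, the functional equation gives $f^{-1}(K) \cap A_m = \lambda^m \cdot (f^{-1}(p^{-m}(K)) \cap A_0)$ and summing yields
\[
\areacyl\bigl(f^{-1}(K) \setminus \D\bigr) = \int_{A_0} \#\bigl\{m\geq 0 : p^{\circ m}(f(z)) \in K\bigr\} \, \frac{\dif x \dif y}{|z|^2}.
\]
Thus the area property over $K$ reduces to a uniform bound on the return counts of $p$-orbits to $K$.

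For $K \subset A(\infty)$ compact, Green's function $G_p$ satisfies $G_p|_K \subset [a,b]$ with $a > 0$, and $G_p \circ p = d\cdot G_p$ forces the return count at any point to be at most $1 + \lceil \log(b/a)/\log d\rceil$, yielding the area property near infinity. For $K$ in a bounded component of $\C\setminus S(f)$, $K$ lies in a bounded Fatou component of $p$. Assuming $p$ has no Siegel discs, every cycle of bounded Fatou components is attracting, super-attracting, or parabolic; the periodic cycle lies in $S(f)$, so $K$ is at positive distance from it, and the corresponding K\oe{}nigs, B\"ottcher, or Fatou coordinate plays the role of $G_p$ to give a uniform return-count bound. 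Preperiodic components reduce to the periodic case, and by Sullivan there are no wandering components, so the area property holds.

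Conversely, if $p$ has a Siegel disc $D$, pick a compact $K \subset D \setminus S(f)$ with non-empty interior; such a $K$ exists because $S(f) \cap D$ is contained in the closure of a countable union of $p$-orbits and is therefore nowhere dense in $D$. Since $p|_D$ is conjugate to an irrational rotation, for every $z$ with $f(z) \in D$ the orbit $\{p^{\circ m}(f(z))\}$ equidistributes in $D$ and hence visits $K$ infinitely often. The open set $f^{-1}(D)$ is non-empty and satisfies $\lambda \cdot f^{-1}(D) \subset f^{-1}(D)$, so it contains points of arbitrarily large modulus; applying the formula on $\{|z| \geq |\lambda|^k\}$ for a $k$ large enough that $f^{-1}(D) \cap \{|\lambda|^k \leq |z| < |\lambda|^{k+1}\}$ is non-empty, the integrand is $+\infty$ on a positive-measure open set, so $\areacyl(f^{-1}(K) \setminus \D) = +\infty$ and the area property fails.

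The step I expect to be most delicate is the uniform return-count bound in the parabolic case, where the Fatou coordinate is defined only within the attracting petals and one must patch petal estimates together with hyperbolic contraction on the remaining compact part of the Fatou component. Finally, the postcritically finite assertion follows at once: for PCF $p$, the postcritical set $P(p)$ is finite, so $S(f) \subset P(p)$ is finite and $f$ (of finite order $\log d/\log|\lambda|$) is transcendental entire, giving $f \in \classS$; PCF polynomials admit no Siegel discs, so the area property holds, and Theorem~\ref{thm:area} delivers invariance of order.
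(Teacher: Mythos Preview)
Your integral identity and the reduction to forward-orbit return counts is a genuinely different route from the paper's. The paper passes to the infinitesimal formulation (Proposition~\ref{prop:infinitesimal}) and relates the resulting sum over $f^{-1}(w)$ to the Poincar\'e series of $p$ at exponent~$2$; the key observation is that for $w\in F(p)$ outside any Siegel disc, a small disc $D\ni w$ has pairwise disjoint backward images $p^{-n}(D)$, so their total spherical area is finite. Your approach is the natural forward-orbit dual of this, and for $K\subset A(\infty)$ or $K$ inside a bounded Fatou component it is arguably more direct.

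There is, however, a gap in your case analysis. The dichotomy ``$K\subset A(\infty)$'' versus ``$K$ in a bounded component of $\C\setminus S(f)$'' is not exhaustive: the unbounded component of $\C\setminus S(f)=\C\setminus\P(p)$ typically contains $J(p)\setminus\P(p)$ (every such point lies on $\partial A(\infty)$, hence in the same component as $A(\infty)$), and may also meet bounded Fatou components whose boundaries are not entirely contained in $\P(p)$. For compact $K$ meeting $J(p)$, the return count is \emph{not} uniformly bounded --- repelling periodic points in $K\cap J(p)$ have infinite return count, and nearby points have arbitrarily large return count --- so your argument, as written, establishes the area property only over $A(\infty)$, not the area property near infinity. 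The fix is to invoke the fact that the infinitesimal condition~\eqref{1overzf} is constant on components of $\C\setminus S(f)$ (the last clause of Proposition~\ref{prop:infinitesimal}), which lets you replace any $w\in J(p)$ by some $w'\in A(\infty)$ in the same component; this is exactly the reduction the paper performs at the outset of its proof. One further imprecision in your Siegel argument: orbits under an irrational rotation equidistribute on their invariant \emph{circle}, not in all of $D$, so ``visits $K$ infinitely often'' does not follow for every $f(z)\in D$. You should instead choose $K$ to contain an arc of the invariant circle through some specific $f(z_0)$, and then use continuity to obtain an open neighbourhood of $z_0$ on which $N_K\circ f=\infty$.
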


The preceding theorem provides examples of functions in the class $\B$ where 
  the area property fails. These examples 
  rely in an essential way on the fact that the singular set of $f$
  (which includes the boundary of any Siegel disc of $p$) disconnects 
  the plane. So the theorem leaves open the possibility that 
   the area property holds 
   \emph{near infinity} for all finite-order functions $f\in\B$, 
   which would imply the general area property, and hence invariance of order, when $f\in\classS$. 
   We show that this is not the case:
 \begin{thm}[Counterexamples to the area property near infinity] \label{thm:counterexample}
  There exists a function $f\in\B$ of finite order such that $f$ does not
  have the area property near infinity. 
 \end{thm}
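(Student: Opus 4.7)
My plan is to construct $f$ by designing a specific tract geometry containing ``bubbles'' of Euclidean size comparable to distance from the origin, and realizing the model as an entire function in $\B$ via Bishop's quasiconformal folding theorem. The bubbles will be placed at geometrically spaced heights to force the cylindrical area integral to diverge, while the tract's logarithmic width will be bounded to keep the order of $f$ finite.

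First I would reduce the question to a computation in a single tract. Fix $R>\max_{w\in S(f)}|w|$ and let $T$ be a component of $f^{-1}(\{|w|>R\})$. Then $\tilde F:=\log f|_T\colon T\to \H:=\{\Re\zeta>\log R\}$ is a conformal isomorphism with inverse $\phi=\tilde F^{-1}$. For a compact $K\subset\{|w|>R\}$, the change of variables $\zeta=\tilde F(z)$ gives
\[
\int_{f^{-1}(K)\cap T}\frac{\dif x\,\dif y}{|z|^2}
\;=\;\int_{K^*}\frac{|\phi'(\zeta)|^2}{|\phi(\zeta)|^2}\,\dif A(\zeta),
\]
where $K^*:=\exp^{-1}(K)\cap\H$ decomposes as $\bigsqcup_{k\in\mathbb{Z}}(\log K+2\pi ik)$. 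By Koebe's distortion theorem, $|\phi'|/|\phi|\asymp\operatorname{dist}(\phi,\partial T)/(|\phi|\cdot\operatorname{dist}(\zeta,\partial\H))$, so the integral diverges when $T$ is ``fat'' (distance to boundary comparable to the modulus of $\phi$) at an infinite sequence of points whose $\tilde F$-images have bounded real part and imaginary parts tending to infinity.

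Concretely, I would take the model tract $T=\{\Re z>0\}\cup\bigcup_{k\geq 1}D(z_k,c|z_k|)$ where $z_k=2^k i$ and $c<\tfrac14$ is a small constant. The discs straddle the imaginary axis, attach to $\{\Re z>0\}$ across single arcs, and are pairwise disjoint by geometric spacing, so $T$ is simply connected. Normalizing $\tilde F$ so that $\tilde F(z_k)=\zeta_k:=1+2\pi ik$, Koebe's theorem applied on $D(\zeta_k,\tfrac14)\subset\H$ gives $|\phi'(\zeta_k)|\asymp c|z_k|$ and hence $|\phi'/\phi|\asymp c$ throughout that disc. Choosing $K$ to be a sufficiently small disc around a base value $w_0$ with $\log w_0=1$, each translate $\log K+2\pi ik$ meets the corresponding Koebe disc in positive area, so the integral is bounded below by $\sum_k c^2\cdot\operatorname{area}(\log K)=\infty$.

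For finite order I would show that, under the principal branch of the logarithm, $\log T$ is contained in a strip of bounded height: $\log\{\Re z>0\}=\{|\Im u|<\pi/2\}$, and each bubble contributes only a bump of size $O(c)$ above this strip. The conformal isomorphism from a strip of height $\pi+O(c)$ onto $\H$ behaves asymptotically like $u\mapsto\exp(\pi u/(\pi+O(c)))$, so in $z$-coordinates $\tilde F(z)\sim z^{\pi/(\pi+O(c))}$, giving $f=\exp\tilde F$ of order strictly less than $1$. Bishop's quasiconformal folding theorem (or an analogous class-$\B$ realization result) then produces an entire $f\in\B$ whose tract agrees with $T$ up to bounded quasiconformal distortion; the construction preserves both the finite order and the divergence of the cylindrical area integral up to multiplicative constants. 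The main obstacle is the tension between fat-bubble geometry (required for divergent area) and finite order, resolved by the key observation that Euclidean bubbles of radius $\asymp|z_k|$ correspond to bumps of only bounded size in the $u=\log z$ plane, so the log-tract remains confined to a strip of fixed height.
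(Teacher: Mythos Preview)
Your overall strategy---build a tract with ``fat'' features that force the cylindrical-area integral to diverge, then realize the model as an entire function via an approximation/folding theorem---is exactly the one the paper uses. But your argument has a genuine gap at the most delicate point. You write: ``Normalizing $\tilde F$ so that $\tilde F(z_k)=\zeta_k:=1+2\pi ik$.'' This cannot be imposed: a conformal isomorphism $T\to\H$ is determined up to three real parameters, so you cannot prescribe the images of infinitely many points. What you actually need is that, for some fixed normalization of $\tilde F$, the points $\tilde F(z_k)$ have uniformly bounded real part \emph{and} that enough of the translates $\log K+2\pi ik$ land where $|\phi'/\phi|$ is bounded below. Neither is automatic. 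Heuristically, your bubbles sit at geometric heights $2^k$ in $T$, while the translates of $\log K$ sit at arithmetic heights $2\pi k$ in $\H$; under the roughly logarithmic correspondence $\H\to\log T$, one must control which translates land inside a bump and how deep. Your proposal supplies none of this analysis, and all subsequent Koebe estimates rest on the unjustified normalization.

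The paper resolves precisely this issue by a different mechanism. Working in logarithmic coordinates from the outset, it attaches side-chambers to a central strip via openings of \emph{variable} size $\eps_k$, one free parameter per chamber, and then tunes each $\eps_k$ so that the chamber midpoint $\zeta_k$ satisfies $\re G(\zeta_k)=1$ exactly. This guarantees both the bounded-real-part condition and (via a hyperbolic-distance argument) that the associated collections $\mathcal{M}_k$ of translates are pairwise disjoint and each contributes $\asymp 1/k$ to the sum, yielding divergence. Your bubbles, with fixed relative size $c$ and wide openings, carry no such tunable parameter. A secondary issue: your realization step is only schematic. The paper invokes a specific approximation theorem (which requires the image domain to be $\{x>-14\log_+|y|\}$, not $\H$) and then carefully transfers the failure of the area property from the model to the entire function using Proposition~\ref{prop:areapropertyqc}; whatever result you invoke, you must verify its hypotheses for your tract and argue that the quasiconformal correction does not restore the area property.
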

 \begin{remark}
  The counterexample is constructed by precisely the 
   same method as that used by the second author to construct a 
   hyperbolic entire function with maximal
   hyperbolic dimension constructed in 
   \cite{approximationhypdim}. Indeed, it can be shown that the
    counterexample from that paper violates the area property. For some remarks
   concerning connections between the area property and measurable dynamics,
   see Section~\ref{sec:remarks}. 
 \end{remark}

 We consider Theorem 
  \ref{thm:counterexample} to provide  strong evidence that
  invariance of order does not hold in general. 

\begin{conj}[Non-invariance of order] \label{conj:counterexample}
  There exist entire functions $f,g\in\classS$ such that
   $f$ and $g$ are topologically equivalent, but
   $\rho(f)\neq \rho(g)$. 
\end{conj}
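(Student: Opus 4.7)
Since the final statement is an open conjecture rather than a theorem, this is necessarily a research plan. The overall strategy is to combine the two independent pieces of evidence assembled in the paper: Theorem \ref{thm:changeinB} shows that in class $\B$ one \emph{can} change the order by a topological equivalence, and Theorem \ref{thm:counterexample} shows that the hypothesis of Theorem \ref{thm:area} can fail for a finite-order function in $\B$. By Theorem \ref{thm:area}, failure of the area property is a \emph{necessary} condition for non-invariance of order in $\classS$, so the plan is to produce a finite-order map $f\in\classS$ that violates the area property and then construct a topologically equivalent $g\in\classS$ whose order differs.

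Step 1: Construct $f\in\classS$ for which (\ref{eqn:area}) diverges. The natural attempt is to adapt the construction underlying Theorem \ref{thm:counterexample}, replacing the infinite singular set by a finite one. Concretely, one would start from a model of logarithmic tracts whose cylindrical area at infinity diverges, but truncate and glue them so that only finitely many singular values arise. The second author's approximation method from \cite{approximationhypdim} already produces $\classS$-approximants in related settings, so the goal is to perform that approximation while preserving enough of the tract geometry to keep the integral in (\ref{eqn:area}) infinite.

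Step 2: Produce a topologically equivalent $g\in\classS$ of different order. Once the area property fails, the Teichm\"uller--Wittich machinery used in Theorem \ref{thm:area} breaks down, and in principle the cylindrical preimage of a compact $K\subset\C\setminus S(f)$ carries enough conformal mass near infinity to support a nontrivial quasiconformal deformation. Following the standard Eremenko--Lyubich template, one would pull back by a homeomorphism of $\C$ that stretches selected tracts according to a nontrivial power law at infinity, solve the Beltrami equation, and identify the resulting map with a $g$ topologically equivalent to $f$ via Definition~\ref{defn:topequ}. Verifying that $\rho(f)\neq\rho(g)$ would amount to tracking how the stretching affects the maximum modulus inside the modified tracts.

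The main obstacle is the rigidity of $\classS$ relative to $\B$. Two difficulties must be overcome simultaneously. First, the quasiconformal deformation must preserve finiteness of the singular set, so the support and the shape of the Beltrami coefficient need to be tuned to the combinatorics of the line-complex rather than chosen freely as in class $\B$. Second, the induced homeomorphisms $\phi,\psi$ in Definition~\ref{defn:topequ} must extend as orientation-preserving homeomorphisms of the entire plane, not merely of a neighbourhood of infinity, which places global topological constraints on the allowable stretchings. Finally, even granting a suitable deformation, one must rule out that $g$ secretly has the same order as $f$ by some cancellation; this is a genuinely analytic estimate that Thurston-type rigidity does \emph{not} preclude, since the relevant $f$ cannot be postcritically finite, but that will require explicit control on the tract widths. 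I expect that progress will come from models with strip-like ends of polynomially varying width, for which the order can be computed directly from the tract geometry and compared across a quasiconformal deformation.
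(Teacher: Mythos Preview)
The statement is a \emph{conjecture} in the paper, and the paper offers no proof: it assembles the evidence (Theorems~\ref{thm:changeinB} and~\ref{thm:counterexample}) and records the expectation. You correctly recognize this and present a research plan rather than a proof. Your plan tracks exactly the heuristic the paper itself uses to justify the conjecture --- the area property is sufficient for invariance of order (Theorem~\ref{thm:area}), so one should construct a finite-order map in $\classS$ violating it and then deform. The paper's ``Subsequent work'' paragraph notes that a proof was later announced by Bishop~\cite{bishoporder}, whose quasiconformal-folding methods are broadly in the spirit of your Step~1.

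One substantive point your plan understates: for $f\in\classS$, the maps quasiconformally equivalent to $f$ form, modulo affine equivalence, a \emph{finite-dimensional} complex manifold (the Eremenko--Lyubich parameter space~\cite{alexmisha}). Hence Step~2 cannot proceed by choosing a Beltrami coefficient supported on $f^{-1}(K)$ and solving; any deformation that stays inside $\classS$ must move within this finite-dimensional family. This is the precise form of the ``rigidity of $\classS$ relative to $\B$'' you flag, and it reframes the task as showing that order varies along that family rather than constructing an ad hoc dilatation. Your phrasing ``pull back by a homeomorphism \ldots\ stretch selected tracts \ldots\ solve the Beltrami equation'' suggests more freedom than is actually available; the real work is to build $f$ so that the finitely many parameters genuinely affect the asymptotic geometry of the tracts.
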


\subsection*{Subsequent work}
 While this article was being prepared, a proof of Conjecture
  \ref{conj:counterexample} was announced by Chris Bishop \cite{bishoporder}. His work 
  also includes a counterexample to Conjecture~\ref{conj:eremenkolyubich}. {%
   Since his results were announced, we noticed that Poincar\'e functions can be used to give 
   an alternative counterexample to the latter conjecture; we include the short argument in
   Proposition~\ref{thm:eremenkolyubichcounterexample}.}

 The area property is also closely connected with the question of whether the exceptional set in a certain 
   theorem
   of Littlewood can be chosen to be finite. Geyer \cite{geyerlittlewood} has independently considered Poincar\'e functions for
   polynomials with Siegel discs, as in Theorem \ref{thm:linearizers}, to construct class $\B$ counterexamples to this property. 
   We refer to his paper for a discussion of the precise question. 

\subsection*{Structure of the article}
  In Section \ref{sec:examples}, we introduce a number of definitions and preliminaries, and in particular recall some 
   key facts concerning topological and quasiconformal equivalence. We also review the classical case of maps with polynomial or
    rational
   Schwarzian derivative, as studied by Nevanlinna and Elfving. In Section~\ref{sec:linearizers}, we study the 
   basic properties of Poincar\'e functions and prove Theorem~\ref{thm:changeinB}. The area property is studied
   in detail in Section \ref{sec:area}, where we prove Theorem~\ref{thm:area} and discuss a number of equivalent 
   formulations of~\eqref{eqn:area} that will be helpful in the following. We then return to the study of Poincar\'e functions and the
   proof of Theorem \ref{thm:linearizers}, which turns out to rely on a connection between the area property for the linearizer and the 
   Poincar\'e series of the original polynomial. Theorem \ref{thm:counterexample} is proved in Section \ref{sec:counterexample}, using 
   a construction from \cite{approximationhypdim}.

  Finally, Section \ref{sec:qd} discusses consequences of the area property for pushforwards of \emph{quadratic differentials}, and
   Section \ref{sec:remarks} touches on a number of topics that are connected to our considerations, but go beyond the main scope of the
   article.

\subsection*{Basic notation}
  We shall assume that the reader is familiar with basic facts from
   complex geometry \cite{forster}, hyperbolic geometry
   \cite{beardonminda} and the theory
   of quasiconformal maps \cite{ahlforsqc}. In particular, we shall use the following elementary fact.
\begin{obs}[Quasiconformal maps isotopic to a given homeomorphism]\label{obs:isotopy}
 Let $\phi:\Ch\to\Ch$ 
   be an orientation-preserving homeomorphism, and let $E\subset\Ch$ be finite.

 Then
  there exists a quasiconformal homeomorphism $\tilde{\phi}:\Ch\to\Ch$ isotopic to $\phi$ relative $E$ and conformal near $E$. 
   If $\# E \leq 3$, then $\tilde{\phi}$ can be chosen to be a M\"obius transformation. 
\end{obs}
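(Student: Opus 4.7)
My plan is to construct $\tilde\phi$ in two stages: first handle $\#E\le 3$ directly via M\"obius maps, then treat general finite $E$ by smoothing $\phi$ globally and modifying it conformally in neighbourhoods of the points of $E$.

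When $\#E\le 3$, the M\"obius group acts simply triply transitively on $\Ch$ by orientation-preserving conformal automorphisms, so there is a M\"obius transformation $M$ with $M|_E=\phi|_E$. It then suffices to show $M$ is isotopic to $\phi$ rel $E$; after composing with $M^{-1}$, this reduces to the classical fact that every orientation-preserving self-homeomorphism of $\Ch$ fixing at most three points pointwise is isotopic to the identity rel those points. For $\#E\le 2$ this follows from contractibility of the group of orientation-preserving self-homeomorphisms of $\C$ or $\C^*$; for $\#E=3$ one cuts $\Ch$ along arcs between the three fixed points and applies the Alexander trick to each resulting disk.

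For arbitrary finite $E$, the first step is to replace $\phi$ by a diffeomorphism. By the standard smoothing theorem for homeomorphisms of smooth surfaces, $\phi$ is isotopic rel $E$ to a $C^\infty$-diffeomorphism $\psi\colon\Ch\to\Ch$; being a diffeomorphism of a compact surface, $\psi$ automatically has bounded dilatation and is quasiconformal. The second step modifies $\psi$ on pairwise disjoint disks $D_e$ about each $e\in E$. Choosing local charts so that $e$ and $\psi(e)$ correspond to $0$, the germ of $\psi$ at $0$ has derivative $A=d\psi(0)\in GL_2^+(\R)$. Since $GL_2^+(\R)$ is path-connected and contains $\C^*$ as the subgroup of complex-linear maps, there is a smooth path $A_t$ in $GL_2^+(\R)$ from $A$ to multiplication by some $c_e\in\C^*$. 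Using a cutoff that replaces $\psi$ by its differential $Az$ on a concentric disk, and then deforming $Az$ radially via $A_t$ to $c_e z$ on a yet smaller disk, one obtains a diffeomorphism of $D_e$ that equals $\psi$ near $\partial D_e$, equals $z\mapsto c_e z$ near $0$, and is isotopic to $\psi|_{D_e}$ rel $\{e\}\cup\partial D_e$. Gluing these local modifications into $\psi$ produces $\tilde\phi$.

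The main obstacle is ensuring the local modification is everywhere a diffeomorphism: a naive convex combination of $\psi$ and $z\mapsto c_e z$ can develop critical points. The resolution is to interpolate \emph{derivatives} within the connected group $GL_2^+(\R)$ and assemble the map radially, so the differential always lies in $GL_2^+(\R)$; choosing a sufficiently small outer disk controls the error terms $\psi(z)-Az=O(|z|^2)$ and $d\psi(z)-A=O(|z|)$, and a gentle radial cutoff keeps the derivative of the interpolant invertible. Bounded dilatation and the required isotopy class are then automatic.
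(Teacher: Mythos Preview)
The paper does not actually prove this observation: it is listed among the ``basic facts'' the reader is assumed to know (alongside the Koebe Distortion Theorem and the standard hyperbolic estimate), with no argument supplied. So there is nothing to compare against, and your task was simply to supply a proof from scratch.

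Your argument is essentially correct. The treatment of $\#E\le 3$ via triple transitivity of the M\"obius group and triviality of the relevant mapping class groups is fine, and so is the global smoothing step. The only place where the write-up is loose is the second half of the local modification. Once you have replaced $\psi$ by its linearisation $Az$ on a small disk, you interpolate radially via $\Psi(z)=A_{\rho(|z|)}z$. Computing the differential gives
\[ d\Psi(z)=A_{\rho(|z|)}+\rho'(|z|)\,\bigl(\dot A_{\rho(|z|)}z\bigr)\,\tfrac{z^{T}}{|z|}, \]
and the rank-one correction has operator norm comparable to $|\rho'(r)|\cdot r\cdot\|\dot A\|$. This is small (so that $d\Psi\in GL_2^+(\R)$ and, separately, the images of concentric circles are nested, giving global injectivity) only if $\rho$ varies slowly \emph{in the variable $\log r$}; that is, the inner and outer radii of the transition annulus must differ by a large multiplicative factor. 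A bump function supported on a single dyadic annulus would not suffice. Your phrase ``a gentle radial cutoff'' is ambiguous on this point; if it is read as gentle in $\log r$, the proof goes through.

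A less computational alternative for the local step: since the once-punctured disk has trivial mapping class group relative to its boundary, \emph{any} quasiconformal map $D_e\to\psi(D_e)$ agreeing with $\psi$ on $\partial D_e$ and sending $e$ to $\psi(e)$ automatically lies in the correct isotopy class. One can then build such a map directly---a Riemann map on a small inner disk, glued across an intermediate annulus to $\psi$ by any smooth extension---without tracking derivatives.
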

 If $\phi$ is a quasiconformal map, then (following Bishop) the \emph{quasiconstant} of $\phi$
    is the smallest number $K$ such that $\phi$ is $K$-quasiconformal. Furthermore,
    we denote the complex dilatation of $\phi$ by $\mu_{\phi}$. 

The Koebe Distortion Theorem 
   \cite[Theorem~1.3]{pommerenke} will also be used frequently. 
   An important consequence of this theorem (and the Schwarz Lemma) is 
   the \emph{standard estimate} \cite[Theorems 8.2 and 8.6]{beardonminda}
   on the hyperbolic metric in a simply-connected domain: if $V\subset\C$ is simply-connected and $\rho_V$ denotes the density of the hyperbolic
   metric of $V$, then 
       \begin{equation}\label{eqn:standardestimate} \frac{1}{2\dist(z,\partial V)} \leq \rho_V(z) \leq \frac{2}{\dist(z,\partial V)}. \end{equation}

  Throughout the paper, the complex plane, the punctured plane,
   the Riemann sphere and
   the unit disc are denoted $\C$, $\C^*$, $\Ch$ and $\D$, respectively. 

  We shall sometimes use ``$\const$'' to indicate a constant in a formula. For example, $f(x) \leq \const\cdot |x|$ should be read as 
   ``there exists a constant $C$ such that $f(x)\leq C\cdot|x|$''. We also write $a\asymp b$ to mean that $a$ and $b$ differ by at most a 
   multiplicative constant; i.e. $a \leq \const\cdot\, b \leq \const\cdot\, a$. The notation $f(z)\sim g(z)$ (as $z\to\infty$) means that
   $\lim_{z\to\infty} f(z)/g(z) = 1$.

\subsection*{Acknowledgments} 
We thank {Chris Bishop}, David Drasin, {Lukas Geyer}, Kevin
Pilgrim, Stas Smirnov and, especially, Alex Eremenko {interesting discussions, encouragement
and assistance. We would also like to thank the referees for helpful comments and corrections.}

\section{Maps in the Speiser and Eremenko-Lyubich class}

\label{sec:examples}

\subsection*{Singular values}
 Let $f\colon\C\to\C$ be a transcendental entire function. A point
  $z\in\C$ is called a \emph{regular value} if 
  there is an open {$U\ni z$} such that 
  $f$ maps each component of $%
  f^{-1}(U)$ homeomorphically onto $U$. Otherwise $z$ is called a
  (finite) \emph{singular value}, and the set of all such singular values
  is denoted by $S(f)$. Note that, since the set of regular values is open,
  this coincides with the definition of $S(f)$ given in the introduction.

 Denote the sets of \emph{critical} and \emph{asymptotic} values
  of $f$ by 
\begin{align*}
C(f)&\defeq\left\{ x\in \C\colon x=f(w)\text{ for some }w\in \C\text{ with }f^{\prime
}(w)=0\right\}\qquad\text{and}\\
A(f)&\defeq\left\{ 
\begin{array}{c}
x\in \C\colon x=\lim_{t\rightarrow 1}f(\gamma (t))\text{ for some path } \\ 
 \gamma \colon [0,1)\rightarrow \C\text{ with $\gamma(t)\to\infty$ as $t\to 1$}
\end{array}
\right\},
\end{align*}
 respectively. Then it follows from covering theory that
   \[ S(f) = \overline{C(f)\cup A(f)}. \]
  (Clearly $C(f)\cup A(f)\subset S(f)$, and if $x$ has a neighborhood 
   not intersecting $C(f)\cup A(f)$, then $x$ is a regular value by
   the Monodromy Theorem.) 

\subsection*{Topological and quasiconformal equivalence}
  Note that all three sets, $C(f)$, $A(f)$ and $S(f)$, are defined
 topologically, and hence are preserved by topological equivalence. 

\begin{obs}[Topological equivalence respects singular values]
 Suppose that $f$ and $g$ are topologically equivalent, say
  $\psi\circ f = g\circ\phi$. Then
  $A(g)=\psi(A(f))$, $C(g)=\psi(C(f))$ and $S(g)=\psi(S(f))$.
\end{obs}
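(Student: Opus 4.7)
My plan is to reduce all three identities to a single pair of inclusions and then handle the asymptotic and critical cases separately, using that topological equivalence is symmetric (if $\psi\circ f = g\circ\phi$, then $\psi^{-1}\circ g = f\circ\phi^{-1}$) to obtain the reverse inclusions for free. Once $A(g)=\psi(A(f))$ and $C(g)=\psi(C(f))$ are established, the statement $S(g)=\psi(S(f))$ is immediate: $\psi$ is a homeomorphism, hence commutes with closure, so $\psi(S(f))=\psi(\overline{C(f)\cup A(f)})=\overline{C(g)\cup A(g)}=S(g)$.

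For asymptotic values, the key observation is that any homeomorphism $\phi\colon\C\to\C$ is automatically proper, since the inverse image of a compact set is again compact; equivalently, $\phi$ extends to a homeomorphism of $\Ch$ fixing $\infty$. So if $x=\lim_{t\to 1}f(\gamma(t))$ with $\gamma(t)\to\infty$, then $\tilde\gamma\defeq\phi\circ\gamma$ still tends to infinity, and
\[ g(\tilde\gamma(t))=g(\phi(\gamma(t)))=\psi(f(\gamma(t)))\xrightarrow[t\to 1]{}\psi(x), \]
showing $\psi(x)\in A(g)$. The reverse inclusion follows from the symmetric equivalence, giving $A(g)=\psi(A(f))$.

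The delicate point is critical values, because the definition of $C(f)$ uses $f'$, whereas $\phi$ and $\psi$ are only assumed to be homeomorphisms. The remedy is to give a purely topological characterization of critical points that is visibly invariant under the equivalence. Concretely, for a nonconstant holomorphic function $f$, a point $w\in\C$ is a critical point if and only if $f$ fails to be injective on every neighborhood of $w$ (this is a standard consequence of the local normal form $f(z)=f(w)+c(z-w)^k+\cdots$, which makes $f$ a $k$-to-$1$ branched covering near $w$). Local injectivity at $w$ is preserved by pre- and post-composition with homeomorphisms, so $w$ is a critical point of $f$ if and only if $\phi(w)$ is a critical point of $g$. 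Applying $g$ and using $g\circ\phi=\psi\circ f$, we obtain $C(g)=\psi(C(f))$.

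The main (and only) obstacle is this reconciliation between the analytic definition of critical points and the merely topological nature of the equivalence, and the local-injectivity characterization dispatches it cleanly; everything else is formal manipulation of the relation $\psi\circ f=g\circ\phi$ together with the properness of a self-homeomorphism of the plane.
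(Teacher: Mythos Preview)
Your proof is correct. The paper does not give a detailed argument for this observation; it simply precedes the statement with the remark that ``all three sets, $C(f)$, $A(f)$ and $S(f)$, are defined topologically, and hence are preserved by topological equivalence.'' Your write-up makes this precise in exactly the intended way: asymptotic values are topological because a self-homeomorphism of $\C$ is proper, critical points are topological because for nonconstant holomorphic maps they coincide with points of local non-injectivity, and $S(f)$ then follows since homeomorphisms commute with closure. So your approach is the same as the paper's, only with the details spelled out.
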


Recall the definition of the \emph{Speiser class} and the
  \emph{Eremenko-Lyubich class} from the introduction:  
\begin{align*}
   \classS &\defeq \{f\colon\C\to\C \text{ transcendental, entire}\colon S(f)\text{ is finite}\};\\
    \B &\defeq \{f\colon\C\to\C \text{ transcendental, entire}\colon S(f)\text{ is bounded}\}. 
\end{align*}

 One of the key properties of the class $\classS$ with respect to
  topological equivalence is that the 
  maps $\phi$ and $\psi$ in Definition \ref{defn:topequ} can always be
  chosen to be quasiconformal (see Proposition \ref{prop:topequ}~\ref{item:qcequ} below). 
  For functions with infinitely many singular values, this need no longer be 
  true, and it makes sense to introduce the following
  definition (see \cite{boettcher}):
 \begin{defn}[Quasiconformal equivalence]
   Two entire functions $f$ and $g$ are called \emph{quasiconformally equivalent} if there are quasiconformal homeomorphisms
     $\phi,\psi\colon \C\to\C$ such that $\psi\circ f = g\circ \phi$.

   We shall refer to $\phi$ and $\psi$ from this definition or from 
    Definition \ref{defn:topequ} as \emph{witnessing homeomorphisms}. 
 \end{defn}

 {The following facts regarding topological and quasiconformal equivalence are mostly folklore
   (although, for the class $\classS$,  parts~\ref{item:isotopy} and~\ref{item:qcequ} essentially
    appear in \cite[Section~3]{alexmisha}). 
     We provide the short proofs for completeness.}
 \begin{prop}[Properties of topological equivalence]
  \begin{enumerate}[{\normalfont(a)}]
    \item Suppose that $f$ and $g$ are topologically equivalent, with witnessing homeomorphisms $\phi$ and $\psi$. If a homeomorphism
      $\tilde{\psi}\colon\C\to\C$ is
       isotopic to $\psi$ relative $S(f)$, then there exists a homeomorphism $\tilde{\phi}$, isotopic to $\phi$ relative
      $f^{-1}(S(f))$, such that
      $\tilde{\phi}$ and $\tilde{\psi}$ are also witnessing homeomorphisms for $f$ and $g$. If $\tilde{\psi}$ is quasiconformal, respectively conformal,
     then $\tilde{\phi}$ is also. \label{item:isotopy}
    \item If $f$ and $g$ are quasiconformally equivalent and $f$ has finite positive order, then $g$ also has finite positive order. More precisely,
  \[
     0<\frac{1}{K}\leq \frac{\rho (g)}{\rho (f)}\leq K<\infty,
\]
   where $K$ is the quasiconstant of $\phi$. \label{item:order}
    \item Suppose that $f$ and $g$ are quasiconformally equivalent and that the 
      witnessing homeomorphism $\phi$ is \emph{Lipschitz at $\infty$}; i.e.,
      $|\phi(z)|\asymp |z|$ for sufficiently large $z$.
      Then
      $\rho(f)=\rho(g)$. \label{item:asconforder}
    \item If $f,g\in\classS$ are topologically equivalent, then they are quasiconformally equivalent. If $\# S(f)=2$, then $\phi$ and $\psi$ can be chosen
     to be affine (and $\rho(f)=\rho(g)$). \label{item:qcequ}
  \end{enumerate}\label{prop:topequ}
 \end{prop}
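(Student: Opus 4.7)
The plan is to prove the four parts in order.

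For part~\ref{item:isotopy}, I would use isotopy lifting through the covering map $f\colon\C\setminus f^{-1}(S(f))\to \C\setminus S(f)$. Let $(\psi_t)_{t\in[0,1]}$ be an isotopy from $\psi_0=\psi$ to $\psi_1=\tilde{\psi}$ fixing $S(f)$ pointwise. The family $t\mapsto\psi_t\circ f$ lifts through $g$, starting from $\phi$, to a continuous family $\phi_t\colon \C\setminus f^{-1}(S(f))\to \C\setminus g^{-1}(S(g))$ with $g\circ\phi_t=\psi_t\circ f$, by pointwise covering-space path-lifting. At $z\in f^{-1}(S(f))$, the constraint $g(\phi_t(z))=\psi_t(f(z))=f(z)$ is $t$-independent and forces $\phi_t(z)$ to lie in the discrete set $g^{-1}(f(z))$; continuity of the lift pins down $\phi_t(z)\equiv\phi(z)$. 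Set $\tilde{\phi}=\phi_1$. When $\tilde{\psi}$ is (quasi)conformal, the local identity $\tilde{\phi} = g^{-1}\circ\tilde{\psi}\circ f$ (away from the critical points of $f$) shows $\tilde{\phi}$ is (quasi)conformal off a discrete set, hence everywhere by removability of isolated singularities.

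For part~\ref{item:order}, I invoke the classical quasiconformal distortion estimate: any $K$-quasiconformal homeomorphism $\eta$ of $\Ch$ fixing $\infty$ satisfies $(\log|z|)/K - O(1) \leq \log|\eta(z)| \leq K\log|z| + O(1)$ as $|z|\to\infty$. Applied to $\psi$ this gives $\log\log|\psi(w)| = \log\log|w| + O(1)$ for $|w|$ large, and applied to $\phi$ it bounds $\log|\phi(z)|$ between $(\log|z|)/K$ and $K\log|z|$ up to additive constants. Substituting $w=\phi(z)$ into the definition of $\rho(g)$ and using $\psi\circ f=g\circ\phi$,
\[
\rho(g)=\limsup_{z\to\infty}\frac{\log\log|\psi(f(z))|}{\log|\phi(z)|}=\limsup_{z\to\infty}\frac{\log\log|f(z)|+O(1)}{\log|\phi(z)|},
\]
and the distortion estimates on $\phi$ yield $\rho(f)/K \leq \rho(g)\leq K\rho(f)$. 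Part~\ref{item:asconforder} then follows immediately: the hypothesis $|\phi(z)|\asymp|z|$ upgrades the estimate to $\log|\phi(z)|=\log|z|+O(1)$, so the limsup above equals $\rho(f)$ exactly.

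For part~\ref{item:qcequ}, since $S(f)$ is finite, Observation~\ref{obs:isotopy} supplies a quasiconformal $\tilde{\psi}$ isotopic to $\psi$ relative $S(f)$; then part~\ref{item:isotopy} produces a quasiconformal $\tilde{\phi}$, witnessing the quasiconformal equivalence of $f$ and $g$. When $\#S(f)=2$, I extend everything to $\Ch$ by $\psi(\infty)=\infty$ and apply Observation~\ref{obs:isotopy} with three marked points to obtain a M\"obius $\tilde{\psi}$ fixing $\infty$, i.e.\ affine; then part~\ref{item:isotopy} yields an entire bijection $\tilde{\phi}$, necessarily affine, and part~\ref{item:asconforder} gives $\rho(f)=\rho(g)$. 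The main obstacle is the isotopy lifting in part~\ref{item:isotopy}: one must verify that the pointwise covering-space lift assembles into a continuous family \emph{and} extends continuously across the ramification set $f^{-1}(S(f))$. Both hinge on the fact that $\psi_t$ fixes $S(f)$ pointwise for every $t$, together with the discreteness of the fibres of $g$.
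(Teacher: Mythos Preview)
Your approach matches the paper's in all four parts: isotopy lifting for~\ref{item:isotopy}, the H\"older distortion estimate for quasiconformal maps for~\ref{item:order} and~\ref{item:asconforder}, and Observation~\ref{obs:isotopy} combined with~\ref{item:isotopy} for~\ref{item:qcequ}. Two minor points: in~\ref{item:isotopy} you write $\psi_t(f(z))=f(z)$, but it should be $\psi(f(z))$ (the isotopy is relative $S(f)$, meaning $\psi_t|_{S(f)}=\psi|_{S(f)}$, not that $\psi_t$ is the identity there)---the $t$-independence you need still holds; and the paper carries out the extension across $f^{-1}(S(f))$, which you correctly flag as the main obstacle, via a concrete local argument (taking a small disk $D$ on which $f$ is a proper branched cover and tracking the lifted curve inside $D$), rather than the discreteness-plus-continuity sketch you give, which as written is slightly circular since $\phi_t(z)$ is not yet defined at such $z$.
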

 \begin{remark}
   It follows from the final statement in \ref{item:qcequ} that the answer to Question~\ref{question:order} is always positive when
     $\# S(f)=2$. 
 \end{remark}
 \begin{proof}
  Part~\ref{item:isotopy} follows by lifting the isotopy. More precisely, let $(\psi_t)_{t\in[0,1]}$ 
   be an isotopy from $\psi$ to $\tilde{\psi}$. Then, on every component
     of $U \defeq f^{-1}(\C\setminus S(f))$, there is a unique lift $(\phi_t)_{t\in[0,1]}$ 
     of this isotopy (since $f$ is a covering map on each such component). So we
     have an isotopy $\phi_t\colon U\to \phi(U)$, and only need to show that the maps $\phi_t$ extend continuously to $\partial U$ and agree with $\phi$ there.

   We may assume without loss of generality that $\psi=\phi=\id$. 
     Let $z_0\in \partial U$, and set $w_0 \defeq f(z_0)$; we must show that
    $\phi_t(z)\to z_0$ as $z\to z_0$ in $U$. Let $D$ be a small simply-connected neigborhood of $z_0$, chosen to 
     ensure that $f\colon D\to f(D)\eqdef V$ is a proper map with no critical points except possibly at $z_0$. We must show that $\phi_t(z)\in D$ when
     $z$ is sufficiently close to $z_0$. By continuity of $f$ and the isotopy, if $z$ is close enough to $z_0$, and $w\defeq f(z)$, then $\psi_t(w)\in V$ for all $t$.
      (Recall that $\psi_t(w_0)=w_0$ for all $t$.) The point $z_t=\phi_t(z)$ is obtained by analytic continuation of $f^{-1}$ along the 
      curve $t\mapsto \psi_t(w)$, which is entirely contained in $V$. Hence it follows that $z_t\in D$ for all $t$, as desired.

   Away from the critical points of $f$,
     the homeomorphism $\phi$ can be written as a composition of $\psi$ with an inverse branch of $f^{-1}$. Hence, 
    if $\psi$ is quasiconformal resp.\ conformal, then $\phi$ is also (with the same quasiconstant).

  Claim \ref{item:order} follows from the H\"older property of quasiconformal
     mappings (see e.g.\ \cite[Theorem 2 in Chapter III]{ahlforsqc}). Indeed, we have $|w|^{1/K}/C\leq |\phi(w)| \leq C\cdot |w|^K$ for a suitable constant
     $C$ and all sufficiently large $w$, and similarly for $\psi$. To use this in the formula for the
     order of $g$, let us write $z=\phi(w)$. We have 
      \begin{align*} \frac{\log_+\log_+ |g(z)|}{\log|z|} &=
            \frac{\log_+\log_+ |\psi(f(w))|}{\log |\phi(w)|} \leq
            \frac{\log_+\log_+ C\cdot |f(w)|^K}{\log \frac{|w|^{1/K}}{C}} \\ &=
            K\cdot \frac{O(1) + \log_+\log_+ |f(w)|}{O(1) + \log |w|}, \end{align*}
       and hence $\rho(g)\leq K\rho(f)$. The opposite inequality follows on reversing 
     the roles of $f$ and $g$. 
    Item \ref{item:asconforder} is immediate from the same computation. 

   The final claim follows from \ref{item:isotopy} and Observation \ref{obs:isotopy}. 
 \end{proof}%

\subsection*{Maps with polynomial Schwarzian derivative}
 The investigations of F.\ and R.\ Nevanlinna concerning the inverse problem of 
  value-distribution
  theory 
  involved a study \cite{fnevanlinnaschwarzian,nevanlinnaschwarzian}
  of those transcendental meromorphic functions $f\colon\mathbb{C%
}\rightarrow \widehat{\mathbb{C}}$ whose \emph{Schwar\-zian derivative }$\mathcal{%
S}_{f}=\left( \frac{f^{\prime \prime }}{f^{\prime }}\right) ^{\prime }-\frac{%
1}{2}\left( \frac{f^{\prime \prime }}{f^{\prime }}\right) ^{2}$ is a
polynomial. They characterized these maps as the meromorphic functions with
finitely many ``logarithmic ends'', or logarithmic singularities: that is, $f$ is a map of finite
 type, all singular values of $f$ are asymptotic (rather than critical) values, and furthermore
 for every $a\in S(f)$ and every sufficiently small disc $D\ni a$, 
 the number of connected components
 of $f^{-1}(D)$ that are not mapped homeomorphically by $f$ is finite. Such a component
 is called a ``logarithmic tract'' and corresponds to a unique ``logarithmic singularity'' (see \cite{walteralexsingularities} for more 
  details concerning the classification of inverse function singularities). 

For entire functions, the
condition on $\mathcal{S}_{f}$ reduces to the requirement that $f$ has
polynomial \emph{nonlinearity }$\mathcal{N}_{f}=\frac{f^{\prime \prime }}{%
f^{\prime }}$. 

Slightly more generally, G.\ Elfving \cite{elfving}
allowed finitely many
critical points in addition to the finitely many logarithmic singularities, to obtain
the class of transcendental meromorphic functions $f\colon\Ch \to \Ch$
with rational Schwarzian derivative. (Compare also \cite{adamschwarzian,langleyschwarzian}.) 
The finite poles of $\mathcal{S}_{f}$
are precisely the critical points of $f$; in fact,
  \[ \mathcal{S}_{f}(\zeta )=%
    \frac{m}{(z-\zeta )^{2}}+O\left( \frac{1}{z-\zeta }\right)\]
near a point $\zeta$ where $\deg _{\zeta }f=m$.
The corresponding entire functions have
rational nonlinearity, with 
  \[ \mathcal{N}_{f}(\zeta )=\frac{m}{z-\zeta }%
+O\left( 1\right)\] near such a point $\zeta $. 

{A calculation of asymptotics
 from the defining differential equations (see pp.
 298-303 of \cite{nevanlinna}, and pp. 391-393 of \cite{hille}) allows one to
 determine the order of these functions explicitly in terms of the degree of the Schwarzian at $\infty$. The latter,
  in turn, can be expressed in terms of the number of logarithmic singularities of $f$:}

\begin{proposition}[Order of maps with rational Schwarzian]\label{prop:schwarzian}
Let $f\colon \C \to \Ch $ be a transcendental meromorphic function with
rational Schwarzian derivative. {Then 
  $\rho (f)=\frac{2+\deg_{\infty }\mathcal{S}_{f}}{2} = \frac{\ell}{2}$, where $\ell$ is the number of logarithmic singularities of $f$.}
In particular,
if $f$ is entire with rational nonlinearity, then 
 $\rho (f)=1 + {\deg_{\infty} \mathcal{N}_{f}}$. 
\end{proposition}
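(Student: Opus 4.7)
The plan is to represent $f$ through a linear second-order ODE and then apply classical asymptotic analysis near $\infty$. It is a standard fact that if $\mathcal{S}_f = \Phi$ is meromorphic, then $f$ is recovered (up to post-composition with a M\"obius transformation) as the ratio $f = w_1/w_2$ of two linearly independent solutions of the linear equation $w'' + \tfrac{1}{2}\Phi\, w = 0$. For $\Phi = \mathcal{S}_f$ rational with $d \defeq \deg_\infty \mathcal{S}_f$, this ODE has an irregular singular point at $\infty$, together with regular singular points at the finitely many critical points of $f$ (where $\mathcal{S}_f$ has a double pole).

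Next, I would invoke the WKB (Liouville--Green) method, as carried out in the references of Nevanlinna and Hille cited above. In each Stokes sector around $\infty$, the equation admits a fundamental system with asymptotic expansions of the form
\[ w_{\pm}(z) \sim z^{-d/4}\exp\!\left(\pm\tfrac{2\sqrt{-c/2}}{d+2}\, z^{(d+2)/2}\right), \]
where $c$ is the leading coefficient of $\mathcal{S}_f$ at $\infty$. This shows that $w_1$ and $w_2$ are meromorphic of order exactly $(d+2)/2$ (with poles only at the finitely many poles of $\Phi$), and consequently the representation $f = w_1/w_2$ yields the upper bound
\[ \rho(f) \leq \tfrac{d+2}{2}. \]

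For the identification $\ell = d+2$, I would count Stokes sectors: there are exactly $d+2$ of them around $\infty$, and in each such sector the ratio $w_1/w_2$ tends to a definite limit in $\Ch$, producing a logarithmic tract of $f$ and hence a logarithmic singularity of $f^{-1}$ over that value. By the Elfving--Nevanlinna characterization recalled above, these logarithmic singularities together with the finitely many critical values account for all of $S(f)$. The matching lower bound $\rho(f) \geq \ell/2$ then follows from the classical Denjoy--Carleman--Ahlfors theorem, which bounds the number of direct transcendental singularities of the inverse by $\max(1, 2\rho)$. Combined with the upper bound, this gives $\rho(f) = \ell/2 = (d+2)/2$.

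Finally, the entire case is immediate from the identity $\mathcal{S}_f = \mathcal{N}_f' - \tfrac{1}{2}\mathcal{N}_f^2$: if $\mathcal{N}_f$ is rational of degree $n$ at $\infty$, then the quadratic term dominates, forcing $\deg_\infty \mathcal{S}_f = 2n$, and hence $\rho(f) = 1 + n$. The main obstacle throughout is the rigorous derivation of the WKB asymptotics and the careful matching of solutions across Stokes lines at the irregular singular point $\infty$; however, this is a delicate but classical computation handled in full in the cited references, so for the proof in the paper it suffices to quote it.
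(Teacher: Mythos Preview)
Your proposal is correct and aligns with the paper's treatment: the paper does not prove this proposition at all, but merely refers to the asymptotic analysis in Nevanlinna and Hille, which is precisely the ODE reduction plus WKB computation you sketch. Your outline (ratio of solutions of $w'' + \tfrac{1}{2}\mathcal{S}_f\,w = 0$, Stokes-sector count giving $\ell = d+2$, and the relation $\mathcal{S}_f = \mathcal{N}_f' - \tfrac{1}{2}\mathcal{N}_f^2$ for the entire case) is exactly what those references contain, so you are in effect expanding the citation rather than departing from it; the one minor addition is that you invoke Denjoy--Carleman--Ahlfors explicitly for the lower bound, whereas the classical treatment reads off both bounds directly from the asymptotic formulae for $w_\pm$ in each sector.
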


\begin{cor}[Invariance of order for maps with rational Schwarzian]
   \label{cor:rationalschwarzianinvariant}
 Let $f\colon \C \to \Ch $ be a transcendental meromorphic function with
rational Schwarzian derivative. 
Then $\rho (f)=\rho (g)$ for any topologically equivalent map 
 $g$.
\end{cor}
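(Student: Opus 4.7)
The plan is to reduce everything to Proposition \ref{prop:schwarzian}, which supplies the explicit formula $\rho=\ell/2$ in terms of the number $\ell$ of logarithmic singularities. The task then splits into two claims: first, that the property of having rational Schwarzian derivative is a topological invariant among meromorphic functions; second, that $\ell$ is a topological invariant. Granted both, applying Proposition \ref{prop:schwarzian} to $f$ and to $g$ separately gives $\rho(f)=\ell(f)/2=\ell(g)/2=\rho(g)$.

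For the first claim, I would invoke the Nevanlinna--Elfving characterization recalled just before the proposition: a meromorphic function has rational Schwarzian derivative precisely when it is of finite type and, for every singular value $a$ and every sufficiently small disc $D\ni a$, each component of $f^{-1}(D)$ is mapped either homeomorphically onto $D$, as a finite-degree branched cover over $D$ (ramified at a critical point above $a$), or as a universal cover of the punctured disc $D\setminus\{a\}$ (a logarithmic tract). Write $\psi\circ f=g\circ\phi$ for witnessing homeomorphisms. Since singular values are preserved by topological equivalence, $S(g)=\psi(S(f))$ is finite, so $g$ is of finite type; and $\phi$ restricts to a homeomorphism between $f^{-1}(\Ch\setminus S(f))$ and $g^{-1}(\Ch\setminus S(g))$ intertwining the two unbranched coverings. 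For each singular value $a$ of $f$ and each sufficiently small disc $D\ni a$ with $\overline{D}\cap S(f)=\{a\}$, this forces $\phi$ to induce a bijection between the components of $f^{-1}(D)$ and the components of $g^{-1}(\psi(D))$ preserving the topological type of each component. Consequently $g$ also satisfies the Nevanlinna--Elfving condition, so it too has rational Schwarzian derivative, and the number of logarithmic tracts (and therefore the number $\ell$ of logarithmic singularities) is the same for $f$ and for $g$.

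The corollary then follows by applying Proposition \ref{prop:schwarzian} to both functions. The main obstacle is the component-by-component correspondence above: one must verify that $\phi$, defined a priori only on preimages of $\Ch\setminus S(f)$, extends continuously across the preimages of $S(f)$ and matches up the local covering structures of $f$ and $g$. This is a routine lifting argument for covering maps, analogous to the end-lifting argument used in the proof of Proposition \ref{prop:topequ}\,(a), and presents no essential difficulty once formulated correctly.
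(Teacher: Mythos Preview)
Your proposal is correct and follows essentially the same approach as the paper: observe that the Nevanlinna--Elfving characterization (finite type, finitely many critical points, finitely many logarithmic singularities) is purely topological, deduce that $g$ also has rational Schwarzian derivative with the same number $\ell$ of logarithmic singularities, and apply Proposition~\ref{prop:schwarzian} to both maps.

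One small remark: the ``obstacle'' you flag at the end is not actually present. In Definition~\ref{defn:topequ} the witnessing homeomorphisms $\phi,\psi$ are \emph{global} homeomorphisms of the plane (or sphere), not merely defined on the complement of the singular set; so there is no extension problem, and the component-by-component correspondence between preimages of small punctured discs is immediate from the relation $\psi\circ f = g\circ\phi$.
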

\begin{proof}
 Clearly the numbers of logarithmic singularities over infinity, logarithmic singularities over finite asymptotic values, and of
  critical points are preserved
 under topological equivalence. In particular, the function $g$ also has rational Schwarzian derivative, and hence 
 $\rho(f)=\rho(g)$ by Proposition \ref{prop:schwarzian}
\end{proof} 

\section{Poincar\'e functions: Non-invariance of order in \texorpdfstring{$\B$}{B}}\label{sec:linearizers}

 Let $h\colon \C\to\C$ be an entire function, and let $\zeta\in\C$ be a repelling
  fixed point
  of $h$. That is, $h(\zeta)=\zeta$ and $|\lambda|>1$,
    where $\lambda =h^{\prime }(\zeta )$ is the associated \emph{multiplier}.

 Then 
  there exists a unique (up to restriction) 
  conformal map $f$, defined near $0$, such that
  $f(0)=\zeta$, $f'(0)=1$ and
   \begin{equation} \label{eqn:functionalequation}
      f(\lambda z) = h(f(z)). \end{equation}
  (See e.g.\ \cite[Theorem 6.3.2]{beardon}.)
 Using (\ref{eqn:functionalequation}), we can extend
  $f$ to an entire function $\C\to\C$ satisfying
  (\ref{eqn:functionalequation}) for all $z\in\C$.

\begin{defn}[Poincar\'e function]
 The linearizing semiconjugacy $f\colon \C\to\C$ as above is called
  the \emph{Poincar\'e function} of $f$ at $\zeta$. 
\end{defn}

If $h$ is a polynomial of degree $D$, then it is easy to verify that
  $f$ has finite order 
\begin{equation}
\rho (f)=\frac{\log D}{\log |\lambda |}  \label{eqn:logD}
\end{equation}
(see \cite[Formula~(4)]{eremenkosodin}). 
 Moreover, the singular set $S(f)$ coincides with the postcritical set $%
\P(h) = \cl{\bigcup_{n=1}^{\infty }h^{n}(C(h))}$ of $h$ 
  \cite[Proposition~3.2]{helenajoern}\footnote{%
{We remark that, in the proof of part (ii) of \cite[Proposition~3.2]{helenajoern}, an
   equality is stated for the singular set $S(h\circ f)$ (using our notation above) that does not
  appear justified in the case where $h$ is transcendental entire. However, this equality is not, in fact, used later in the
  proof, so that the proposition remains correct as stated. 
   (Also note that, both in our paper and in \cite{helenajoern}, the result is usually applied only 
  when $h$ is a polynomial.)}}
. In particular, $f$ has finite type if
  and only if $h$ is \emph{postcritically finite}, and $f$ belongs to the 
  Eremenko-Lyubich class if and only if 
   $\P(h)$ is bounded, which is equivalent to $J(h)$ being connected. Further 
   function-theoretic properties of Poincar\'e functions have been investigated by
   Drasin and Okuyama \cite{drasinokuyama}.

To prove Theorem \ref{thm:changeinB}, we observe that
 a conjugacy between polynomials (and, in fact, entire functions)
   will result in the topological equivalence of their linearizers.

\begin{prop}[Conjugacy implies equivalence of Poincar\'e functions] \label{prop:linearizerstop}
   Suppose that $h_1$ and $h_2$ are non-constant, non-linear entire functions, and that $h_1$ and $h_2$ are topologically conjugate
     via a homeomorphism
     $\psi\colon \C\to\C$; that is, $\psi\circ h_1 = h_2\circ \psi$. Let $x_1$ be a repelling  fixed
    point of $h_1$, set $x_2 \defeq \psi(x_1)$, and let
     $f_1,f_2\colon \C\to\C$ be the corresponding Poincar\'e functions of $h_1$ and $h_2$.

   Then there is a homeomorphism $\phi\colon \C\to\C$ such that $\psi\circ f_1 = f_2\circ \phi$. If $\psi$ is quasiconformal, then $\phi$ is also quasiconformal.
\end{prop}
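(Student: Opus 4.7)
The plan is to build $\phi$ in two stages: first define it locally near $0$ using the conformality of $f_2$ there, then extend globally by exploiting the linearization identity~\eqref{eqn:functionalequation} and the expansion of the multipliers $\lambda_i = h_i'(x_i)$ with $|\lambda_i|>1$ (noting that $|\lambda_2|>1$ is implicit in the statement, since otherwise $f_2$ would not exist). Since $f_2$ is conformal near $0$ with $f_2(0)=x_2$, I let $g$ denote its local inverse and set
\[
\phi_0(z) \defeq g(\psi(f_1(z)))
\]
on a sufficiently small disc $D_\delta$ around $0$; this is a homeomorphism onto a neighborhood of $0$ that fixes $0$, and it is $K$-quasiconformal whenever $\psi$ is, since $g$ and $f_1$ are both conformal. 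Combining $\psi\circ h_1 = h_2\circ\psi$ with the functional equations for $f_1$ and $f_2$ gives, for $z$ close enough to $0$,
\[
f_2(\lambda_2\,\phi_0(z)) = h_2(\psi(f_1(z))) = \psi(f_1(\lambda_1 z)) = f_2(\phi_0(\lambda_1 z)),
\]
and injectivity of $f_2$ near $0$ then yields the key local identity $\phi_0(\lambda_1 z)=\lambda_2\,\phi_0(z)$.

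Using this identity I extend $\phi_0$ to a map $\phi\colon\C\to\C$ by the rule $\phi(z) \defeq \lambda_2^{\,n}\phi_0(z/\lambda_1^{\,n})$ for any $n$ large enough that $z/\lambda_1^{\,n}\in D_\delta$; well-definedness (independence of $n$) is immediate from the local identity, and because $|\lambda_1|>1$, every $z$ admits such an $n$. To verify that $\phi$ is a homeomorphism, I use the same scaling to reduce bijectivity to that of $\phi_0$ on $D_\delta$: if $\phi(z_1)=\phi(z_2)$ then $\phi_0(z_1/\lambda_1^{\,n})=\phi_0(z_2/\lambda_1^{\,n})$ for large $n$, forcing $z_1=z_2$, and any target $w$ is hit because $\phi_0(D_\delta)$ is a neighborhood of $0$ and the rescalings $\lambda_2^{\,n}\phi_0(D_\delta)$ exhaust $\C$. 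Continuity of $\phi$ and of $\phi^{-1}$ is inherited locally from $\phi_0$.

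The required global identity $\psi\circ f_1 = f_2\circ\phi$ now falls out by iterating the linearization equations: for $z\in\C$ with $z/\lambda_1^{\,n}\in D_\delta$,
\[
\psi(f_1(z)) = \psi(h_1^{\,n}(f_1(z/\lambda_1^{\,n}))) = h_2^{\,n}(f_2(\phi_0(z/\lambda_1^{\,n}))) = f_2(\lambda_2^{\,n}\phi_0(z/\lambda_1^{\,n})) = f_2(\phi(z)).
\]
For the quasiconformal claim, the global extension is obtained from $\phi_0$ by pre- and post-composition with the affine maps $z\mapsto z/\lambda_1^{\,n}$ and $w\mapsto\lambda_2^{\,n}w$, which preserve the complex dilatation, so $\phi$ is $K$-quasiconformal on $\C$. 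The main technical point to watch is that the local functional equation for $\phi_0$ really does propagate to a single consistent homeomorphism of $\C$; the expansion $|\lambda_1|>1$ is precisely what makes every point of $\C$ reachable from $D_\delta$ by iterated scaling, and it is this feature that drives both the well-definedness of the extension and its bijectivity.
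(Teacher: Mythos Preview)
Your proof is correct, but the route is genuinely different from the paper's. The paper constructs $\phi$ globally by analytic continuation: starting from the local branch $\eta_2=f_2^{-1}$ near $x_2$, it continues $\eta_2$ along images $\psi\circ f_1\circ\alpha$ of curves $\alpha$ from $0$ to $z$, and then uses the conjugacy $\psi\circ h_1=h_2\circ\psi$ together with the functional equations for $f_1,f_2$ to prove that this continuation is well-defined (independent of the curve) and yields a homeomorphism away from the critical points of $f_1$, extending afterwards across those critical points. In the quasiconformal case the paper also offers a second, shorter argument via the measurable Riemann mapping theorem applied to the $h_1$-invariant Beltrami form $f_1^*\mu_\psi$.

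Your approach instead exploits the \emph{local} conjugacy relation $\phi_0(\lambda_1 z)=\lambda_2\,\phi_0(z)$ and the expansion $|\lambda_1|>1$ to propagate $\phi_0$ to all of $\C$ by scaling, bypassing any path-lifting or monodromy argument and avoiding special treatment of critical points. This is more elementary and, as a by-product, exhibits $\phi$ explicitly as a conjugacy between the linear maps $z\mapsto\lambda_1 z$ and $w\mapsto\lambda_2 w$ (something the paper only obtains in its alternative quasiconformal argument). The paper's continuation method, on the other hand, is more intrinsic to the covering structure of $f_1$ and $f_2$ and does not rely on knowing the multiplier $\lambda_2$ in advance.
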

\begin{proof}
Let 
   $\eta_2$ be the branch of $f_2^{-1}$ that takes $x_2$ to $0$.  We first 
    define $\phi(z)$, provided $z$ is not a critical point of $h_1$. 
    To do so, let $\alpha$ be a curve connecting $0$ and $z$ and not
    passing through any critical point of $f_1$. Then $\phi(z)$ is
     obtained by analytic continuation of $\eta_2$ along the curve
    $\psi\circ f_1\circ \alpha$.

  The fact that this analytic continuation is defined, and that it is independent of the curve $\alpha$, can be seen as follows. Let $\beta_1$
   be the concatenation of $\alpha$ with the reverse of $\hat{\alpha}$; then $\beta_1$
    is a closed curve
    beginning and ending at $0$. We must show that analytic continuation of $\eta$ along
    $\gamma \defeq \psi\circ f_1\circ \beta$ is possible and leads to $\eta$ rather than some
    other branch of $f_2^{-1}$. 
  
  In other words, 
   we must show that there is a curve $\beta_2$, beginning and ending at $0$ and not
   passing through any critical points of $f_2$, such that $f_2\circ\beta_2 = \gamma$.
    To do so, let $n$ be sufficiently large, and consider the curve
     \[ \gamma^n \defeq \psi \circ f_1\circ \lambda_1^{-n}\circ \beta_1, \]
    where $\lambda_1$ denotes (multiplication by) the multiplier of $h_1$ at $x_1$. 
    For sufficiently large $n$, the curve $\gamma^n$ is contained in a linearizing neighborhood
    of $h_2$ around $x_2$, so we can set $\beta_2^n \defeq \eta_2\circ \gamma^n$; this is a closed curve beginning and ending at $0$. Set
     \[ \beta_2 \defeq \lambda_2^n \circ \beta_2^n, \] 
    where $\lambda_2$ is the multiplier of $h_2$ at $x_2$. Then $f_2\circ \beta_2 = h_2^n\circ \gamma^n = \gamma$. Furthermore, since
    $\beta_1$ does not contain any critical points of $f_1$, and $\psi$ is a topological conjugacy (and hence sends critical points of $h_1$ to critical points
    of $h_2$), the curve $\beta_2^n$ does not contain any critical points of $h_2^n$. Hence $\beta_2$ does not contain any critical points of
    $f_2$, as claimed.

  This defines $\phi$ with the desired property on the complement of the set of critical points of $f_1$. It is easy to see (e.g.\ by applying the
    same construction, but reversing the roles of $f_1$ and $f_2$) that $\phi$ is a homeomorphism between
    the complement of the critical points of $f_1$ and the complement of the critical points of $f_2$. Since both sets are discrete,
   it follows that $\phi$ extends to a homeomorphism $\phi\colon \C\to\C$. (Alternatively, it is also easy to check directly that
   $\phi$ extends continuously to every critical point of $f_1$.)

  If $\psi$ is quasiconformal, then clearly $\phi$ is quasiconformal
  (as it is defined as a composition of locally quasiconformal maps). 
  In this case (which is the one we are mainly interested in), 
  there is an alternative and shorter proof of the proposition.
  Indeed, we can obtain the homeomorphism $\phi$ by solving
  the Beltrami equation for the pullback $f_1^* \mu_{\psi}$ of the complex dilatation 
  of $\psi$. Since $\mu_{\psi}$ is invariant under $h_1$, the
  pullback $f_1^* \mu_{\psi}$ is invariant under $\lambda_1$.  
  It follows that $\phi$ conjugates $\lambda_1$ to
  a linear map, and hence that $g \defeq \psi\circ f_1 \circ \phi^{-1}$ 
  semiconjugates $h_2$ to this linear map. Uniqueness of the
  Poincar\'e function implies $g=f_2$.
\end{proof}

Proposition \ref{prop:linearizerstop} implies Theorem \ref{thm:changeinB},
  in the following stronger form:
\begin{cor}[Non-invariance of order in class $\B$]\label{cor:changeinB}
  There exist two functions $f,g \in\B$ such that $f$ and $g$ are
   quasiconformally equivalent, but $\rho(f)\neq \rho(g)$.
\end{cor}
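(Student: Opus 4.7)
The plan is to deduce the corollary from Proposition \ref{prop:linearizerstop} together with formula (\ref{eqn:logD}). If $h_1$ and $h_2$ are quasiconformally conjugate polynomials of the same degree $D$, then Proposition \ref{prop:linearizerstop} supplies a quasiconformal equivalence between the associated Poincar\'e functions $f_i$ at any pair of corresponding repelling fixed points, while (\ref{eqn:logD}) forces $\rho(f_i) = \log D/\log|\lambda_i|$. Thus I need only exhibit two qc-conjugate polynomials whose corresponding repelling fixed points have multipliers of distinct moduli.

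Concretely, I would take $h_i(z) = z^2 + c_i$ for two parameters $c_1, c_2$ lying in the main cardioid of the Mandelbrot set, so that each $h_i$ has connected Julia set (in fact a quasicircle), one attracting and one repelling fixed point $\beta(c_i)$, and multiplier $\lambda(c_i) = 1 + \sqrt{1-4c_i}$ at $\beta(c_i)$ which varies non-trivially with $c_i$. For a suitable choice of $c_1 \neq c_2$ we have $|\lambda(c_1)| \neq |\lambda(c_2)|$, while connectivity of each $J(h_i)$ guarantees that the corresponding Poincar\'e function $f_i$ lies in $\B$. Formula (\ref{eqn:logD}) then yields $\rho(f_1) \neq \rho(f_2)$.

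The main obstacle is producing a quasiconformal conjugacy $\psi \colon \C \to \C$ between $h_1$ and $h_2$ that sends $\beta(c_1)$ to $\beta(c_2)$. This is the classical fact that any two polynomials in the same hyperbolic component of parameter space are qc-conjugate on all of $\C$. I would appeal to the Ma\~n\'e--Sad--Sullivan theorem together with Slodkowski's extension of holomorphic motions; alternatively, one can construct $\psi$ directly by combining B\"ottcher coordinates on the basin of infinity with Koenigs coordinates on the immediate basin of the attracting fixed point, interpolated quasiconformally across a fundamental annulus. Applying Proposition \ref{prop:linearizerstop} to this $\psi$ then produces the desired pair $(f_1, f_2)$ of qc-equivalent entire functions in $\B$ with $\rho(f_1) \neq \rho(f_2)$.
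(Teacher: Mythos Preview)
Your proposal is correct and follows essentially the same approach as the paper: both arguments take a one-parameter family of hyperbolic quadratic polynomials with an attracting fixed point, invoke the standard fact that any two such maps are quasiconformally conjugate, apply Proposition~\ref{prop:linearizerstop} to the repelling fixed points, and conclude via~\eqref{eqn:logD}. The paper uses the normalization $p_a(z)=az+z^2$ (so the repelling multiplier is exactly $a$) and cites \cite[Proposition~23 on p.~135]{ahlforsqc} for the quasiconformal conjugacy, which is precisely the direct B\"ottcher/Koenigs interpolation you outline.
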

\begin{proof}
  Consider the family of quadratic polynomials
     \[ p_{a}\colon  z\mapsto a z + z^2, \]
    with $0<|2-a|<1$. Then $p_{a}$ has a repelling fixed point 
    of multiplier $a$ at $0$, and an attracting
    fixed point of multiplier $2-a$ at $1-a$. It is well-known
    that any two elements of this family are quasiconformally
    conjugate; see \cite[Proposition 23 on p.\ 135]{ahlforsqc}.

  Let $f_{a}$ be the Poincar\'e function for $p_{a}$ at $0$, and consider two 
    polynomials in this family whose multipliers have different moduli; for example 
    $f \defeq f_{3/2}$ and $g \defeq f_{4/3}$.  By
    Proposition \ref{prop:linearizerstop}, $f$ and $g$ are
    quasiconformally equivalent, but
    $\rho(f)\neq \rho(g)$ by \eqref{eqn:logD}.
\end{proof}

\section{The area property}\label{sec:area}

\subsection*{The area property implies invariance of order}
 The \emph{Teichm\"uller-Wittich} Theorem states that, 
  if $\phi\colon \C\to\C$  is quasiconformal and
\[
\lim_{R\rightarrow \infty }\int_{|z|>R}\left| \frac{\mu _{\phi }(z)}{z^{2}}%
\right| \dif x \dif y=0 
\]
then $|\phi(z)|\sim a\cdot |z|$ for some $a>0$. (Recall that  $\mu_{\phi}$ is the complex dilatation of $\phi$.) 

   Belinski and Lehto later
 also showed that $\arg \phi(z) - \arg z$ has a limit under the same assumptions,
 proving that $\phi$ is in fact \emph{asymptotically conformal} at infinity; i.e.\
    $\phi(z)\sim az$ for some $a\in\C$. This is the
    \emph{Teichm\"uller-Wittich-Belinski-Lehto Theorem}, sometimes
    also known as the Teichm\"uller-Belinski Theorem (see \cite[Chapter V, \S 6]{lehtovirtanen}).

 The Teichm\"uller-Wittich Theorem 
  almost immediately leads to the proof of Theorem \ref{thm:area}, which we 
  shall now state somewhat more generally. In particular, we note the fact that the
  area property itself is 
  preserved under suitable quasiconformal equivalence.

\begin{prop}[Area property, invariance of order and qc equivalence]
  Suppose that the entire functions 
    $f$ and $g$ are quasiconformally equivalent, with witnessing homeomorphisms $\phi$ and $\psi$
      such that the dilatation
    of $\psi$ is supported on a compact subset $K\subset \C\setminus S(f)$.
    If $f$ has the area property, then $\rho(f)=\rho(g)$, and $g$ also has the
    area property. 

   In particular, if $f$ belongs to the class $\classS$ and has the area property, then
    $\rho(f)=\rho(g)$ for every function $g$ that is topologically
     equivalent to $f$.
\end{prop}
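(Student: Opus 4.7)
The plan is to apply the Teichm\"uller-Wittich theorem to $\phi$, using the area property for $f$ to verify its integrability hypothesis.

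The first step is to compute $\mu_\phi$ directly from the semiconjugacy $\psi\circ f = g\circ\phi$. Since $f$ and $g$ are holomorphic, a chain rule calculation yields
\[ \mu_\phi(z) \;=\; \mu_\psi(f(z)) \cdot \overline{f'(z)}/f'(z) \]
wherever $f'(z)\neq 0$. In particular $|\mu_\phi(z)| = |\mu_\psi(f(z))|$ is essentially bounded by $k \defeq \|\mu_\psi\|_\infty < 1$ and vanishes outside $f^{-1}(K)$. Consequently
\[ \int_{|z|>R} \left|\frac{\mu_\phi(z)}{z^2}\right| \dif x\, \dif y \;\leq\; k \int_{f^{-1}(K)\cap\{|z|>R\}} \frac{\dif x\, \dif y}{|z|^2}, \]
which tends to $0$ as $R\to\infty$ by the area property for $f$. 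The Teichm\"uller-Wittich-Belinski-Lehto theorem then gives $\phi(z)\sim a z$ at infinity for some $a\in\C^*$, so $\phi$ is Lipschitz at infinity, and Proposition~\ref{prop:topequ}\ref{item:asconforder} gives $\rho(f)=\rho(g)$.

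To transfer the area property to $g$, I would fix a compact $K'\subset\C\setminus S(g)$ and set $K''\defeq\psi^{-1}(K')$; this is a compact subset of $\C\setminus S(f)$ since $S(g)=\psi(S(f))$. Using $g^{-1}(K')=\phi(f^{-1}(K''))$ and changing variables $z=\phi(w)$, the target integral becomes
\[ \int_{g^{-1}(K')\setminus\D} \frac{\dif x\, \dif y}{|z|^2} \;=\; \int_{f^{-1}(K'')\setminus\phi^{-1}(\D)} \frac{J_\phi(w)}{|\phi(w)|^2} \dif u\, \dif v. \]
Since $|\phi(w)|\asymp|w|$ at infinity by the first step, and $\phi$ is conformal off $f^{-1}(K)$ with $\phi(w)\sim aw$, the integrand is pointwise of order $|w|^{-2}$ on the conformal part, while on the compactly-supported-modulo-$f^{-1}(K'')$ region $f^{-1}(K)\cap f^{-1}(K'')$ one invokes $K$-quasiconformality of $\phi$ together with the area property of $f$ applied to the compact set $K\cup K''\subset\C\setminus S(f)$. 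Finiteness of both contributions yields the area property for $g$.

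The final statement reduces to the first part as follows. Given $f\in\classS$ and $g$ topologically equivalent via witnesses $(\phi,\psi)$, I would use Observation~\ref{obs:isotopy} to replace $\psi$ by a quasiconformal homeomorphism $\tilde\psi$ isotopic to $\psi$ relative to the finite set $S(f)$ and conformal in a neighbourhood of $S(f)$; then $\mu_{\tilde\psi}$ has compact support in $\C\setminus S(f)$. Proposition~\ref{prop:topequ}\ref{item:isotopy} produces a matching $\tilde\phi$, and the quasiconformal case just proved applies. The main obstacle I anticipate is the area-property transfer: converting $\phi(w)\sim aw$ into uniform pointwise control of $|\phi'(w)|$ on the conformal complement of $f^{-1}(K)$ requires care, since this complement need not contain discs of large enough size to make a direct application of Koebe distortion automatic.
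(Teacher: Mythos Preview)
Your treatment of $\rho(f)=\rho(g)$ and of the reduction for $f\in\classS$ matches the paper's proof exactly.

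The gap you yourself flag in the area-property transfer is real and cannot be closed by the change-of-variables route you sketch. Asymptotic conformality $\phi(w)\sim aw$ gives no pointwise (or even locally integrable in the needed sense) control on $J_\phi$. On the conformal part $f^{-1}(K'')\setminus f^{-1}(K)$, the complement of $f^{-1}(K)$ can be arbitrarily thin (indeed $f^{-1}(K)$ typically has infinite Euclidean area), so Koebe does not yield a uniform bound on $|\phi'|$; on the non-conformal part $f^{-1}(K)\cap f^{-1}(K'')$, quasiconformality alone gives only local integrability of $J_\phi$, not a bound comparable to $1$. Neither piece of your splitting can be controlled as stated.

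The paper instead isolates the required fact as a separate result (Proposition~\ref{prop:areapropertyqc}): if a quasiconformal map $\phi$ has dilatation supported on a set $A$ of finite cylindrical area, then $\areacyl(\phi(A)\setminus\D)<\infty$. Its proof decomposes $A$ into dyadic annular pieces, rescales each to the unit disc, and invokes the Gehring--Reich/Astala area-distortion lemma (if $h\colon\D\to\D$ is $K$-quasiconformal with $h(0)=0$ and dilatation supported on $E$, then $\area(h(E))\leq C(K)\,\area(E)$) at every scale; summing the resulting geometric series gives the claim. To deduce the area property for $g$ at a compact $K'\subset\C\setminus S(g)$, one applies this with $A\defeq f^{-1}(K\cup K'')$, which contains both the support of $\mu_\phi$ and $f^{-1}(K'')$ and has finite cylindrical area by the area property for $f$; since $g^{-1}(K')=\phi(f^{-1}(K''))\subset\phi(A)$, finiteness follows. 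The missing ingredient is thus a genuine area-distortion theorem, not a derivative-distortion estimate.
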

\begin{proof}
 Since $g\circ\phi = \psi\circ f$, and $f$ and $g$ are holomorphic, the
  dilatation of $\phi$ is supported on $f^{-1}(K)$.  
   By the area property, this set has finite cylindrical area.
   Hence the Teichm\"uller-Wittich
     Theorem and \ref{prop:topequ} \ref{item:asconforder} imply 
    that indeed $\rho(f)=\rho(g)$.

   If furthermore $f\in\classS$, then by Observation \ref{obs:isotopy} there is
    a quasiconformal homeomorphism $\tilde{\psi}$ that is isotopic to $\psi$ relative $S(f)$
    and whose dilatation is supported away from the singular values.
    According to Proposition \ref{prop:topequ} 
    \ref{item:isotopy}, there is $\tilde{\phi}$ such that
     $\tilde{\psi}$ and $\tilde{\phi}$ are witnessing homeomorphisms for
    the quasiconformal equivalence of $f$ and $g$. 
    So $\tilde{\phi}$ is asymptotically conformal and $\rho(f)=\rho(g)$, 
    as claimed.

  It remains to show that the map $g$ also has the area property. This follows from a geometric fact 
   concerning quasiconformal mappings, which we state separately as Proposition \ref{prop:areapropertyqc} below
   for future reference.
\end{proof}
\begin{prop}[Quasiconformal mappings and the area property] \label{prop:areapropertyqc}
  Suppose that $\phi$ is a quasiconformal mapping, and that the dilatation of $\phi$
    is supported on a set $A\subset\C\setminus\D$ of finite cylindrical area. 
   Then also $\areacyl(\phi(A)\setminus\D)<\infty$. 
 \end{prop}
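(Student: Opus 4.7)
The plan is to transfer the problem to logarithmic coordinates, where cylindrical area becomes Euclidean area and the Teichm\"uller-Wittich-Belinski-Lehto theorem turns $\phi$ into a map asymptotic to a translation; a Beltrami/Beurling argument then bounds the relevant Jacobian integral.

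First I would apply the substitution $w = \phi(z)$ to write
\[ \areacyl(\phi(A) \setminus \D) = \int_{A \cap \phi^{-1}(\C \setminus \D)} \frac{J_\phi(z)}{|\phi(z)|^2} \dif x\, \dif y. \]
Since $|\mu_\phi|\le k<1$ is supported on $A$, the hypothesis gives $\int_A |\mu_\phi|/|z|^2 \dif x\,\dif y \le k\,\areacyl(A) < \infty$, so the Teichm\"uller-Wittich theorem yields $|\phi(z)| \asymp |z|$ at infinity. Discarding a bounded portion of $A$ (whose contribution is trivially finite since $\phi$ is proper), it suffices to bound $\int_A J_\phi(z)/|z|^2 \dif x\,\dif y$.

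Setting $\zeta = \log z$ and $\Phi(\zeta) := \log\phi(e^\zeta)$ (single-valued on a right half-plane after a suitable choice of branch, and satisfying $\Phi(\zeta + 2\pi i) = \Phi(\zeta)+2\pi i$), a chain-rule computation gives $J_\Phi = |z|^2 J_\phi/|\phi|^2$; using $|\phi(z)|\asymp |z|$, the preceding integral transforms (per $2\pi i$-period) into $\int_{\tilde A} J_\Phi\,\dif\xi_1\,\dif\xi_2 = \operatorname{area}(\Phi(\tilde A))$, where $\tilde A := \log A$ has area $\areacyl(A)<\infty$ per period. Moreover, the Belinski-Lehto refinement yields $\Phi(\zeta) - \zeta \to \log a$ at infinity, making $\Phi$ a quasiconformal perturbation of a translation.

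The core step is to show $\operatorname{area}(\Phi(\tilde A)) < \infty$ per period. Writing $h := \Phi_\zeta - 1$, the Beltrami equation $\Phi_{\bar\zeta} = \mu_\Phi \Phi_\zeta$ combined with the inversion $\Phi_\zeta = 1 + S\Phi_{\bar\zeta}$ (where $S$ is the Beurling transform, an $L^2$-isometry on the cylinder) yields the fixed-point equation $h = S(\mu_\Phi (1+h))$. Since $\|\mu_\Phi\|_\infty \le k$ and $\|\mu_\Phi\|_{L^2}^2 \le k^2 \areacyl(A) < \infty$ per period, a Neumann-series estimate gives $\|h\|_{L^2} \le k\|\mu_\Phi\|_{L^2}/(1-k)$; hence $\int_{\tilde A} J_\Phi \le \int_{\tilde A} |1+h|^2 \le 2\,\areacyl(A) + 2\|h\|_{L^2}^2 \le C(K)\,\areacyl(A) < \infty$. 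The main obstacle is implementing the Beurling analysis on the cylinder, since $\mu_\Phi \notin L^2(\C)$ due to its $2\pi i$-periodicity; this is handled either via the periodic Beurling kernel (summing $(\omega - \zeta + 2\pi i n)^{-2}$ over $n \in \mathbb{Z}$ to a closed form involving $\sinh$) or by approximating $\mu_\Phi$ with compactly-supported truncations and passing to the limit using the standard planar existence theorem.
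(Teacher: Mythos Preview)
Your route differs substantially from the paper's: after using Teichm\"uller--Wittich to reduce to bounding $\int_A J_\phi/|z|^2$, you pass to logarithmic coordinates, where the problem becomes an area-distortion estimate for the periodic quasiconformal map $\Phi$ on the cylinder $\C/2\pi i\Z$. That reduction is correct, but your ``core step'' has a real gap. The identity $\Phi_\zeta = 1 + S\Phi_{\bar\zeta}$ is not a general fact about quasiconformal maps; it characterises the \emph{principal solution} of the Beltrami equation with compactly supported $\mu$ on $\C$, normalised at infinity. To invoke it here you would need to (i) set up the Beurling transform as an $L^2$-isometry on the cylinder, (ii) construct the principal solution $\Psi$ there via the Neumann series, and (iii) identify your given $\Phi$ with $\Psi$ up to a conformal automorphism of the cylinder. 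The Belinski--Lehto asymptotic $\Phi(\zeta)-\zeta\to\log a$ is suggestive but does not by itself give $\Phi_\zeta - 1 \in L^2(\text{cylinder})$, which is essentially the conclusion you want. Your truncation alternative is more problematic still: cutting $\mu_\Phi$ to a compact subset of $\C$ destroys periodicity, and the resulting planar principal solutions have no evident relation to $\Phi$. In short, the programme amounts to re-proving the Gehring--Reich area-distortion estimate in cylindrical geometry, and you have only sketched that.

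The paper sidesteps all of this by quoting the disc version of that estimate (in Astala's formulation): for $K$-quasiconformal $f\colon\D\to\D$ with $f(0)=0$ and dilatation supported on $E$, one has $\area(f(E))\le C(K)\,\area(E)$. It decomposes $A$ into dyadic annuli $A_k = A\cap\{2^k<|z|<2^{k+1}\}$, rescales via $\phi_k(z)=\phi(2^{k+1}z)/2^{k+1}$, post-composes with a Riemann map of $\phi_k(\D)$ to land in $\D$, and applies the lemma. The support of the rescaled dilatation has Euclidean area $\asymp \sum_{j\le k}\lambda_j\,2^{j-k}$, where $\lambda_j=\areacyl(A_j)$; summing the resulting bounds on $\tilde\lambda_k=\areacyl(\phi(A_k))$ over $k$ gives a double sum bounded by a constant times $\sum_j\lambda_j<\infty$. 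So the paper trades your conceptually natural cylindrical picture for a clean black-box reduction to an existing planar theorem.
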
 
 \begin{proof}
   This claim is related to the area distortion problem for quasiconformal mappings,
   which was solved completely by 
   Astala \cite{astaladistortion}. The estimates we require are essentially due 
   to Gehring and Reich \cite{gehringreich}.   Instead of
   proving our claim directly
   using these methods, we shall formally derive
   it from the following result stated in Astala's article \cite[Lemma 3.3]{astaladistortion}.
   \emph{For every $K$, there is 
   a constant $C$ with the following property. If $f\colon \D\to\D$ is a $K$-quasiconformal
    homeomorphism with $f(0)=0$
   whose dilatation is supported on a closed subset $E\subset\D$, then 
    $\area(f(E)) \leq C\cdot \area(E)$.}

  To prove our claim, let $A_k$, for $k\geq 0$, denote the 
    the intersection of $A$ with the annulus $\{2^k < |z| < 2^{k+1}\}$, and set 
   $\lambda_k \defeq \areacyl(A_k)$.
  By assumption,
       \[ \sum_{k=0}^{\infty} \lambda_k < \infty. \]
   Now consider $\tilde{A}_k \defeq \phi(A_k)$ and its cylindrical area $\tilde{\lambda}_k$. We
     must show that the sequence $\tilde{\lambda}_k$ is also summable. 

    Note that the conclusion of the claim
    does not change under post-composition of $\phi$ by affine functions. 
    Hence we can assume that $\phi(0)=0$, and 
     (by the Teichm\"uller-Wittich-Belinski-Lehto Theorem) that
     $\phi(z)\sim z$ as $z\to\infty$. 

   For each $k\geq 0$, define a map
    $\phi_k$ by 
      \[ \phi_k(z) \defeq \frac{\phi(2^{k+1}z)}{2^{k+1}}. \]
     Also let $\psi_k$ be a Riemann map for $\phi_k(\D)$; i.e., let 
      $\psi_k\colon  \phi_k(\D)\to \D$ be a conformal isomorphism with $\psi_k(0)=0$ and 
      $\psi_k'(0)>0$. Since $\phi(z)\sim z$ as $z\to\infty$, we see that
      $\phi_k$ converges uniformly to the identity. We define a quasiconformal map $f_k\colon\D\to\D$ by
      \[ f_k \defeq \psi_k \circ \phi_k. \]

     Set $E_k \defeq \{z\in \D\colon  2^{k+1}z\in A\}$; i.e. $E_k$ is the support of the dilatation of $f_k$.  Since cylindrical area is invariant under
       linear maps, we have 
      \[ \area(E_k) \asymp \sum_{j=0}^k \frac{\lambda_j}{2^{k-j}} \]
      (for a constant independent of $k$). For the same reason, we have 
        \[ \tilde{\lambda}_k  \asymp \area(\phi_{k+1}(A_k/2^{k+2})), \qquad\text{and hence}
             \qquad \tilde{\lambda}_k  \asymp \area(f_{k+1}(A_k/2^{k+2})) \]
     by the Koebe Distortion Theorem. (Observe that $A_k/2^{k+2}$ is contained in the
      disc of radius $1/2$ around the origin, and hence $\phi_{k+1}(A_k/2^{k+2})$ is well inside
      $\phi_{k+1}(\D)$. So we can indeed apply the Distortion Theorem to the map 
      $\psi_{k+1}^{-1}$ on $f_{k+1}(A_k/2^{k+2})$.) 
    As $A_k/2^{k+2}\subset E_{k+1}$ by definition, it follows that 
 \[ \tilde{\lambda}_k \leq \const \cdot \area( f_{k+1}(E_{k+1})). \]
       By Astala's result stated above, we thus see that
       \[ \tilde{\lambda}_k \leq \const\cdot \sum_{j=0}^{k+1} \frac{\lambda_j}{2^{k+1-j}}. \]
       Hence
\[           \sum_{k=0}^{\infty} \tilde{\lambda}_k \leq
        \const\cdot \sum_{k=0}^{\infty} \sum_{j=0}^{k+1} \frac{\lambda_j}{2^{k+1-j}} 
    \leq \const\cdot \sum_{j=0}^{\infty} \lambda_j \sum_{m=0}^{\infty} \frac{1}{2^m} 
         < \infty. \qedhere \]
 \end{proof}
\begin{remark}
  Astala states his lemma for \emph{closed} subsets of the disc, but appears to prove it only when the set $E$ is \emph{compact}. 
    Since his estimates depend only on $K$, the version for closed subsets can be reduced to the compact one. Alternatively, for each $k$ we can solve the
    Beltrami equation to obtain a map $\sigma_k$ whose dilatation agrees with that of $\phi$ for $|z|<2^k$ and is zero otherwise. It is easy to see that 
    $\sigma_k\to \phi$ (since the corresponding dilatations converge almost everywhere), 
   and $\area(\sigma_k(A_j))\to \tilde{\lambda}_j$ as $k\to\infty$ for all $j$
   . We can then easily obtain the desired conclusion by applying the 
    proof as above, replacing $\phi$ by $\sigma_k$ in the definition of $f_k$; then the dilatation 
   of $f_k$ has compact support. 
    Using the fact that all estimates are uniform, we easily obtain the desired conclusion. 
\end{remark}

\subsection*{Some equivalent formulations of the area property}
 We now discuss some formulations of the area property that
  are easy to verify. We begin with an infinitesimal version:
\begin{prop}[Infinitesimal area property] \label{prop:equivalentformulation}\label{prop:infinitesimal}
  A transcendental entire function $f$ has the area property if and only if 
\begin{equation}
\sum_{z\in f^{-1}(w)\setminus\D}\frac{1}{|z|^{2}|f^{\prime }(z)|^{2}}<\infty
\label{1overzf}
\end{equation}
   for all $w\in \C\setminus S(f)$.

   Furthermore, if \eqref{1overzf} holds for some $w_0\in\C\setminus S(f)$, then it also holds for all $w$ that belong to the same component of
     $\C\setminus S(f)$ as $w_0$. 
\end{prop}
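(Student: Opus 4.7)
I would prove the proposition via the change-of-variables formula combined with the Koebe Distortion Theorem. Define
\[ F(w) := \sum_{z\in f^{-1}(w)\setminus\D}\frac{1}{|z|^2|f'(z)|^2}. \]
On any simply connected open $V\subset\C\setminus S(f)$, the map $f\colon f^{-1}(V)\to V$ is a covering, so $f^{-1}(V)$ is the disjoint union of the images $g_k(V)$ of the univalent inverse branches $g_k\colon V\to\C$ of $f$. Changing variables in each summand,
\[ \int_{g_k(V)\setminus\D}\frac{\dif x\,\dif y}{|z|^2} = \int_{V\cap\{|g_k|>1\}}\left|\frac{g_k'(w)}{g_k(w)}\right|^2 \dif u\,\dif v, \]
and summing via Tonelli on a cover of any compact $K\subset\C\setminus S(f)$ by such $V$ yields
\[ \int_{f^{-1}(K)\setminus\D}\frac{\dif x\,\dif y}{|z|^2} = \int_K F(w)\,\dif u\,\dif v. \]
Thus the area property is equivalent to $F\in L^1_{\mathrm{loc}}(\C\setminus S(f))$, and in particular it implies $F<\infty$ almost everywhere.

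The heart of the argument is then the following local-boundedness statement: \emph{if $F(w_0)<\infty$ for some $w_0$ in a component $\Omega$ of $\C\setminus S(f)$, then $F$ is bounded on a neighborhood of $w_0$}. Rewrite each summand as $|(\log g_k)'(w)|^2=|g_k'(w)/g_k(w)|^2$, fix concentric discs $V\Subset V'$ around $w_0$ with $\overline{V'}\subset\Omega$, and split the inverse branches on $V'$ into \emph{good} ones, with $g_k(V')\cap\overline{\D}=\emptyset$ and $|g_k'(w_0)|\leq c\,|g_k(w_0)|$ for a suitable constant $c=c(V,V')>0$, and \emph{exceptional} ones. The exceptional branches are finite in number: those with $g_k(V')\cap\overline{\D}\neq\emptyset$ correspond to preimages of $\overline{V'}$ in a compact set, while those with $|(\log g_k)'(w_0)|>c$ number at most $F(w_0)/c^2$, each contributing more than $c^2$ to the finite sum $F(w_0)$. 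For good branches the Koebe Distortion Theorem on $V'$ forces $|g_k(w)|\asymp|g_k(w_0)|$ and $|g_k'(w)|\asymp|g_k'(w_0)|$ uniformly on $V$, whence $|(\log g_k)'(w)|^2\leq C\,|(\log g_k)'(w_0)|^2$; their aggregate contribution to $F(w)$ is at most $C\cdot F(w_0)$. For the finitely many exceptional branches the function $|(\log g_k)'(w)|^2$ is continuous on $V$ (shrinking $V$ if necessary to keep $g_k$ away from $0$ and from $\partial\D$), hence uniformly bounded. Summing, $F\leq C\cdot F(w_0)+\mathrm{const}$ on $V$.

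The ``furthermore'' statement now follows by a chain-of-discs argument: given $w_1\in\Omega$, join it to $w_0$ by a compact path in $\Omega$, cover this path by finitely many overlapping discs on which the previous paragraph applies, and iterate. Because $\mathrm{dist}(\cdot,\partial\Omega)$ is bounded below along the path, the Koebe radii can be chosen uniformly, so the procedure terminates in finitely many steps and gives $F(w_1)<\infty$. The main equivalence then drops out at once: the area property yields $F\in L^1_{\mathrm{loc}}$, hence $F<\infty$ on a set of positive measure in each component of $\C\setminus S(f)$, hence everywhere on that component by the furthermore; conversely, pointwise finiteness gives local boundedness, hence $L^1_{\mathrm{loc}}$, hence the area property.

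\textbf{Main obstacle.} The principal difficulty is that the summands of $F$ switch on and off as $w$ crosses a curve on which $|g_k|=1$, so $F$ is neither a sum of everywhere-defined continuous functions nor obviously subharmonic, and standard convergence or propagation results do not apply directly. This is overcome by the observation that only finitely many branches on any given disc $V'$ have image meeting the bounded set $\overline{\D}$; those branches are peeled off as a finite collection of bounded continuous corrections, while the bulk sum is controlled uniformly by Koebe distortion applied to the well-behaved branches that stay far from $\overline{\D}$.
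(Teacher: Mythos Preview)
Your argument is correct and rests on the same core tool as the paper, the Koebe Distortion Theorem applied to the inverse branches. The paper's execution is more direct: rather than introducing the function $F$ and splitting branches into good and exceptional, it observes in one stroke that for each preimage component $V_z$ of a small closed disc $D_w$ (with $V_z$ lying in a slightly larger domain on which $f$ is univalent, and $V_z$ disjoint from $\tfrac12\D$), Koebe gives the two-sided estimate
\[
  \operatorname{area}_{\mathrm{cyl}}(V_z)\ \asymp\ \frac{1}{|\zeta|^{2}|f'(\zeta)|^{2}}
  \qquad\text{for every }\zeta\in V_z.
\]
Summing over the (all but finitely many) such components then yields both directions of the equivalence and the ``furthermore'' simultaneously, since the right-hand side is uniformly comparable for any two choices of basepoint in $D_w$; this replaces your integral identity, local-boundedness step, and chain-of-discs propagation by a single comparability. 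Incidentally, your second ``good'' criterion $|g_k'(w_0)|\le c\,|g_k(w_0)|$ is already a consequence of the first via the Koebe one-quarter theorem: if $g_k(V')\cap\overline{\D}=\emptyset$ and $V'$ is a disc of radius $r'$ about $w_0$, then $g_k(V')$ contains the disc of radius $r'|g_k'(w_0)|/4$ about $g_k(w_0)$, forcing $|g_k(w_0)|\ge r'|g_k'(w_0)|/4$. Thus with $c=4/r'$ your class of exceptional branches of the second kind is empty, and your argument simplifies to exactly the paper's.
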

\begin{proof}
  Let $w\in S(f)$, and let $D=D_w\subset \C\setminus S(f)$ be a closed topological disc whose 
  interior contains $w$. 
  Let 
  $\tilde{D} \subset \C\setminus S(f)$ 
   be a slightly larger simply-connected domain
   with $D\subset \tilde{D}$. 

  Let $z\in f^{-1}(w)\setminus\D$, and let $V_z$ be the component of $f^{-1}(D)$ containing $z$. 
   If $\tilde{V}$ is the component of $f^{-1}(\tilde{D})$ containing $V_z$, then 
    $f:\tilde{V}\to \tilde{D}$ is a conformal isomorphism, and if $D_w$ was chosen sufficiently
    small, then $V$ does not intersect the disc of radius $1/2$ around the origin. It follows that
      \begin{equation}\label{eqn:keyinfinitesimalstep}
         \min_{\zeta\in V}\frac{1}{|\zeta|^2 |f^{\prime}(\zeta)|^2}
           \asymp \areacyl(V_z) \asymp 
                \max_{\zeta\in V}\frac{1}{|\zeta|^2 |f^{\prime}(\zeta)|^2} \end{equation}
    by the Koebe Distortion Theorem. In particular, the area property implies~\eqref{1overzf}. 

  For the ``if'' direction, suppose that \eqref{1overzf} holds and $K\subset S(f)$ is an arbitrary 
   compact set.  Then we can cover $K$ by finitely many discs $D_{w_1},\dots,D_{w_k}$ as 
   above, and it follows that $\areacyl(f^{-1}(K)\setminus\D)<\infty$.  
   Furthermore,~\eqref{eqn:keyinfinitesimalstep} shows that~\eqref{1overzf} is 
   an open and closed property, and hence the final claim follows.
\end{proof}

 The preceding proof relies crucially on the Koebe Distortion Theorem.
    It is well-known that area distortion theorems hold also for branched covering maps of bounded degree. This allows us to deduce
    that the area property will hold not only near regular values (as in Definition \ref{defn:areaproperty}), but also near non-asymptotic critical
    values for which the degree of the critical points is bounded. In particular, this justifies the remark after Conjecture
    \ref{conj:eremenkolyubich}. 
\begin{lem}[Bounded criticality]\label{lem:boundedcriticality}
  Let $f$ be a transcendental entire function. Let $s$ be an isolated point of $S(f)$ that is not an asymptotic value and such that
   the local degree of $f$ near any preimage of $s$ is uniformly bounded by a constant $\Delta$.

  Let $D$ be a round disc around $s$ such that $\overline{D}\cap S(f)=\{s\}$. If the condition~\eqref{1overzf} holds for all 
     $z\in D^* \defeq D\setminus\{s\}$, then $f^{-1}(D)\setminus\D$ has finite cylindrical area. 
\end{lem}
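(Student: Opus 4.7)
The strategy is to localize around $s$ and exploit the hypothesis~\eqref{1overzf} at a single regular base point, using the bound $d_V \leq \Delta$ on local degree to obtain uniform control via explicit uniformization of the components of $f^{-1}(D)$.

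First I would shrink: pick a closed disc $D_1$ with $D_1 \subset \operatorname{int}(D)$. The annulus $\overline{D}\setminus \operatorname{int}(D_1)$ is a compact subset of $\C \setminus S(f)$. By the last sentence of Proposition~\ref{prop:infinitesimal}, the hypothesis~\eqref{1overzf} propagates throughout the component of $\C\setminus S(f)$ meeting $D^*$, so the ``if'' direction of that proposition gives $\areacyl(f^{-1}(\overline{D}\setminus\operatorname{int}(D_1))\setminus\D)<\infty$. It therefore suffices to bound $\areacyl(f^{-1}(D_1)\setminus\D)$.

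Since $s$ is isolated in $S(f)$ and is not an asymptotic value, by shrinking $D_1$ further each connected component $V$ of $f^{-1}(D_1)$ is a precompact, simply connected neighborhood of a unique preimage $z_V$ of $s$, with $f|_V\colon V\to D_1$ a proper branched cover of degree $d_V \leq \Delta$ ramified only at $z_V$. Let $\phi_V\colon\D\to V$ be the Riemann map with $\phi_V(0)=z_V$; after identifying $D_1$ with the disc of radius $r$ about $s$, the proper degree-$d_V$ map $f\circ\phi_V\colon \D\to D_1$ has its only critical point at $0$, and hence has the explicit form $u\mapsto s+r\omega_V u^{d_V}$ with $|\omega_V|=1$. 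Now fix any $w^*\in D_1$ with $|w^*-s|=r/2$, let $M := \sum_{z \in f^{-1}(w^*)\setminus\D}\tfrac{1}{|z|^{2}|f'(z)|^{2}}<\infty$ by hypothesis, and write $S_V$ for the sub-sum restricted to preimages in $V$. The key step is the uniform estimate
\[ A_V := \areacyl(V\setminus\D) \;\leq\; C(\Delta)\,r^2\,S_V, \]
with $C(\Delta)$ depending only on $\Delta$. The $d_V$ preimages of $w^*$ in $V$ correspond under $\phi_V$ to roots $u_k$ with $|u_k| = 2^{-1/d_V} \leq 2^{-1/\Delta}<1$, so the Koebe Distortion Theorem applied on the subdisc $\{|u|\leq 2^{-1/\Delta}\}$ gives comparison of $|\phi_V(u_k)|$ and $|\phi_V'(u_k)|$ with their values at $0$, with constants depending only on $\Delta$. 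Inserting this together with $|(f\circ\phi_V)'(u)|=rd_V|u|^{d_V-1}$ into both $A_V$ and $S_V$ yields the displayed bound by direct calculation, analogous to the proof of Proposition~\ref{prop:infinitesimal}. Summing over all $V$ then gives $\areacyl(f^{-1}(D_1)\setminus\D)\leq C(\Delta)r^2 M<\infty$.

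The main obstacle is ensuring the Koebe comparison $|\phi_V(u)|\asymp |z_V|$ throughout $V$, which is used to replace $1/|z|^2$ by $1/|z_V|^2$ in the computation of $A_V$. This requires $|\phi_V'(0)| \ll |z_V|$, and can fail for components of unusually large conformal radius. Since $f^{-1}(s)$ is discrete in $\C$, only finitely many $z_V$ lie in any fixed bounded set, so components with $z_V$ near $\overline{\D}$ are finite in number and contribute a bounded amount. For $|z_V|$ large with $|\phi_V'(0)|$ comparable to $|z_V|$, such a component would be forced to contain points very close to the origin; such configurations can be handled by a dyadic decomposition of $\C\setminus\D$ into annuli $\{2^k\leq |z|<2^{k+1}\}$, or alternatively by applying the change-of-variables identity $A_V=\int_{D_1}S_V(w)\,dA(w)$ and bounding the integrand pointwise on $D_1^*$ via the same uniformization.
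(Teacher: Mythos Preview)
Your plan has the same core as the paper's proof: uniformize each preimage component so that $f\circ\phi$ becomes $u\mapsto u^{d}$, compare its cylindrical area to a single term of the sum~\eqref{1overzf} at a fixed regular base point via Koebe, then sum. You also correctly isolate the one real difficulty.

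The paper resolves that difficulty by \emph{enlarging} rather than shrinking. It passes to a slightly larger round disc $\tilde D\supset D$ with $\overline{\tilde D}\cap S(f)=\{s\}$ and uniformizes each component $\tilde V$ of $f^{-1}(\tilde D)$ by $\phi\colon\D\to\tilde V$. The component $V_c\subset\tilde V$ of $f^{-1}(D)$ then corresponds to the subdisc $B_{r^{1/d}}(0)$, where $r<1$ is the ratio of radii; since $d\le\Delta$ this lies in the fixed compact disc $B_{r^{1/\Delta}}\subset\D$, on which Koebe constants depend only on $\Delta$. The paper then restricts to those $\tilde V$ disjoint from $\overline{\D}$: only finitely many components meet $\overline{\D}$, each bounded, so they contribute finite cylindrical area. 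For the remaining $\tilde V$ one has $0\notin\phi(\D)$, and Koebe distortion (equivalently, applied to the univalent function $\log\phi$) gives $|\phi'(u)|/|\phi(u)|\asymp|\phi'(0)|/|c|$ uniformly on $B_{r^{1/\Delta}}$~--- precisely the estimate you flag as problematic. In your setup, by contrast, $\phi_V\colon\D\to V$ is \emph{onto}, so computing $A_V$ means integrating $|\phi_V'|^2/|\phi_V|^2$ over all of $\D$, where no uniform Koebe bound is available; your change-of-variables identity $A_V=\int_{D_1}S_V(w)\,dA(w)$ is correct, but bounding $\sum_V S_V(w)$ near $s$ again requires the same comparison $|\phi_V(u_k)|\asymp|z_V|$, so the circularity is not broken. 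The one-line repair is to enlarge instead of shrink.
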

\begin{proof}
  Let $\tilde{D}$ be a slightly larger round disc around $D$ whose closure still does not intersect the singular set except in $s$.
    By postcomposing with an affine map, we may assume for convenience that $\tilde{D}=\D$. 

 Let $\tilde{V}$ be a component of $f^{-1}(\tilde{D})$ that does not intersect $\D$. 
  The assumptions imply that $\tilde{V}$ is simply-connected and contains a unique preimage $c$ of $s$, of some degree $d\in\{1,\dots,\Delta\}$. 
   Let $\phi:\D\to \tilde{V}$ be a conformal isomorphism with $\phi(0)=c$. It follows that $f(\phi(z)) = \theta\cdot z^d$ for some $\theta\in\C$ 
    with $|\theta|=1$ and
    all $z\in\D$; by precomposing $\phi$ with a rotation we can assume that $\theta=1$. 

  Let $r<1$ denote the radius of $D$, so that $D=B_r(0)$ (where we use the standard notation for Euclidean balls).
    Hence $V_c\defeq \phi(B_{r^{1/d}}(0))$ is the component of $f^{-1}(D)$ containing $c$. 
    We may assume without loss of generality that $r>1/2$. Set $z \defeq \phi(2^{-1/d})\in V_c$; then $f(z)=1/2$. By the functional equation,
    we have $|f'(z)|\cdot|\phi'(2^{-1/d})|=d\cdot 2^{-(d-1)/d}$.  By the Koebe Distortion Theorem, it follows that
   \[
        \areacyl(V_c) = \areacyl(\phi(B_{r^{1/d}})) \asymp 
            \frac{|\phi'(2^{-1/d})|}{|\phi(2^{-1/d})|} =
            \frac{d}{2^{\frac{d-1}{d}}\cdot |z|\cdot |f'(z)|} \asymp \frac{1}{|z|\cdot |f'(z)|}.
    \]
   (Here the constants depend on $\Delta$, but not otherwise on $f$. In particular, they are independent of the choice of $\tilde{V}$.)
  
  So the total logarithmic area of all of these preimages {$V_c$} is bounded in terms of the sum~\eqref{1overzf} for $w=1/2$. 
   {It remains to show that the part of $f^{-1}(D)\setminus\D$ that 
     is contained in preimage
    components of $\tilde{D}$ that do intersect the unit disc has finite area. But each such
    component is bounded, and hence has finite area. Furthermore, by 
    the local mapping properties of holomorphic functions, the number of components of $f^{-1}(\tilde{D})$ is locally finite,
    and hence there are only finitely many components that intersect $\overline{\D}$. The claim follows.}
\end{proof}

There are various other ways to reformulate the area property. For example, since 
  $f$ is a covering map on every component of 
    $f^{-1}(\C\setminus S(f))$, the derivative $f'(z)$ can be expressed in terms of the hyperbolic metric of $f^{-1}(\C\setminus S(f))$. The
    hyperbolic metric of simply-connected domains is particularly easy to estimate in terms of the distance to the boundary, and hence
    we obtain the following.

 \begin{prop}[Distances and the area property] \label{prop:distance}
  Let $f$ be a transcendental entire function, and let $w\in \C\setminus S(f)$. Let $K\subset\C\setminus\{w\}$ be a closed connected set with
    $S(f)\subset K$ and $\# K >1$. Then
    \eqref{1overzf} holds if and only if
     \[  \sum_{z\in f^{-1}(w)\setminus \D}\frac{\dist(z,f^{-1}(K))^2}{|z|^2} < \infty.\]
 \end{prop}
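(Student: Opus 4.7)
The plan is to establish the stronger pointwise equivalence
\[ |f'(z)|\asymp \frac{1}{\dist(z, f^{-1}(K))}\qquad\text{for every } z\in f^{-1}(w)\setminus\D, \]
with multiplicative constants depending on $w$ and $K$ but not on $z$. Once this is done, the two series in the proposition agree termwise up to constants, yielding the asserted equivalence of convergence immediately.

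For the setup, let $U$ be the component of $\C\setminus K$ containing $w$ and, for each $z\in f^{-1}(w)$, let $V_z$ be the component of $f^{-1}(\C\setminus K)$ containing $z$. The hypothesis $\#K\geq 2$ implies that $\Ch\setminus U$ contains at least three points, so $U$ is hyperbolic; since $K\supset S(f)$, the restriction $f\colon V_z\to U$ is a covering map of hyperbolic Riemann surfaces, and hence a local isometry of their hyperbolic metrics:
\[ \rho_{V_z}(z)=|f'(z)|\,\rho_U(w). \]
A short topological argument shows $\partial V_z\subset f^{-1}(K)$ (a boundary point cannot lie in another open component of $f^{-1}(\C\setminus K)$), and that any straight segment from $z$ to a point of $f^{-1}(K)$ crosses $\partial V_z$ first; hence $\dist(z,f^{-1}(K))=\dist(z,\partial V_z)$. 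The upper bound $|f'(z)|\leq \const/\dist(z,f^{-1}(K))$ now follows from the general estimate $\rho_{V_z}(z)\leq 2/\dist(z,\partial V_z)$, which holds for any hyperbolic planar domain by Schwarz--Pick applied to the inclusion of the maximal Euclidean disc centred at $z$ into $V_z$.

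For the matching lower bound, I would fix a closed disc $\overline{D(w,r)}\subset\C\setminus K$ and, for each $z$, let $W_z$ be the component of $f^{-1}(D(w,r))$ containing $z$. Since $D(w,r)$ is simply connected and contained in $\C\setminus S(f)$, the restriction $f\colon W_z\to D(w,r)$ is a conformal isomorphism; Koebe's one-quarter theorem applied to its inverse branch yields $W_z\supset D(z,r/(4|f'(z)|))$, so $\dist(z,\partial W_z)\geq r/(4|f'(z)|)$. Since $W_z\subset V_z$, any straight segment from $z$ to $\partial V_z$ must exit $W_z$ first, giving $\dist(z,\partial W_z)\leq\dist(z,\partial V_z)$ and hence $|f'(z)|\geq (r/4)/\dist(z,f^{-1}(K))$. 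The main point requiring care is that $V_z$ itself need not be simply connected~-- for example when $U$ is the unbounded component of $\C\setminus K$ and $K$ is bounded~-- so the two-sided estimate $\rho_{V_z}(z)\asymp 1/\dist(z,\partial V_z)$ is not directly available from~\eqref{eqn:standardestimate}. The argument above sidesteps this, since the upper bound only uses that $V_z$ is hyperbolic, and the lower bound is obtained via Koebe on the simply connected subdomain $W_z\subset V_z$, so no topological assumption on $V_z$ itself is needed.
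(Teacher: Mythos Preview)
Your argument is correct. The paper takes a slightly different route: rather than splitting into an upper bound via the universal inequality $\rho_{V_z}(z)\le 2/\dist(z,\partial V_z)$ and a lower bound via Koebe on a small disc, it shows that $V_z$ is \emph{always} simply connected and then applies the two-sided standard estimate~\eqref{eqn:standardestimate} directly. The observation is that $U$, being a component of the complement of the closed connected set $K$, is either simply connected or (when $K$ is bounded and $U$ is the unbounded component) conformally a punctured disc $\D^*$; in the latter case the connected covers are $\D\to\D^*$ and $\D^*\to\D^*$, and the second is excluded because $f$ is transcendental entire (the puncture of $V_z$ would force either a finite preimage of $\infty$ or a pole of $f$ at $\infty$). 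So your worry that ``$V_z$ itself need not be simply connected'' is in fact unfounded in this setting---though your workaround is valid and arguably more robust, since it makes no use of the topology of $V_z$ and would carry over unchanged to, say, meromorphic $f$.
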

 \begin{proof}
   Let $z\in f^{-1}(w)$, let $W$ be the component of $\C\setminus K$ containing $w$ and let $V$ be the component of $f^{-1}(W)$ containing $z$.
 Then $f\colon V\to W$ is a holomorphic covering map.  If $\rho_V$ and $\rho_W$ denote the densities of the hyperbolic
    metrics of $V$ and $W$, we thus have
     \begin{equation}\label{eqn:coveringderivative} |f'(z)| = \rho_V(z)/\rho_W(w). \end{equation}

 The domain $W$ is either simply-connected or conformally equivalent to the 
   punctured unit disc 
    (if $K$ is bounded and $W$ is the unbounded connected component of $\C\setminus K$). The only covering spaces of the punctured disc are given by
   the universal covering (via the exponential map) and the punctured disc
   (via $z\mapsto z^d$, $d\geq 1$). The latter case cannot occur in our
   setting, since $f$ is transcendental; so we see that 
   $V$ is simply-connected.
  The claim now follows from~\eqref{eqn:coveringderivative} and the standard estimate~\eqref{eqn:standardestimate}. 
 \end{proof}

\subsection*{A return to Poincar\'e functions}
 We now study the area property for Poincar\'e functions, 
  proving Theorem \ref{thm:linearizers}.

\begin{thm}[Area property for linearizers]\label{thm:arealinearizers}
  Let $p$ be a polynomial of degree $\geq 2$
   with a repelling fixed point at $0$, and let 
   $f\colon \C\to\C$ be the Poincar\'e function for this fixed point.

  Let $w\in \C\setminus \P(p) = \C\setminus S(f)$. Then 
    (\ref{1overzf}) holds for $w$ if and only if $w$ does not belong to a Siegel disc of $p$.
\end{thm}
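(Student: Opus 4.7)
The plan is to reduce~(\ref{1overzf}) to a weighted Poincar\'e-series-type sum for $p$ and analyze its convergence. The starting observation is that, since $f(0)=0$, $f'(0)=1$, and $f(\lambda z)=p(f(z))$, the uniqueness of the Koenigs linearization forces $f = \phi_p^{-1}$ on a neighborhood $U$ of $0$, where $\phi_p\colon U_* \to U$ (with $U_* := f(U)$) is the Koenigs coordinate for $p$ at $0$. For any $z \in f^{-1}(w)$ of large modulus I let $n = n(z) \geq 0$ be the smallest integer with $z/\lambda^n \in U$, and set $w_n := f(z/\lambda^n)$. By the functional equation, $p^n(w_n) = w$; minimality of $n$ forces $w_n$ to lie in the bounded annular fundamental domain $B := U_* \setminus p^{-1}(U_*)$, and conversely each pair $(n,w_n)$ with $w_n \in p^{-n}(w) \cap B$ yields a unique preimage $z = \lambda^n \phi_p(w_n)$. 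A chain-rule computation gives $|z| \asymp |\lambda|^n$ and $|f'(z)| \asymp |(p^n)'(w_n)|/|\lambda|^n$, so
\[ \frac{1}{|z|^2 |f'(z)|^2} \;\asymp\; \frac{1}{|(p^n)'(w_n)|^2},\]
and hence~(\ref{1overzf}) is equivalent to convergence of
\[ T(w) \;:=\; \sum_{n\geq 0} \sum_{w_n \in p^{-n}(w) \cap B} \frac{1}{|(p^n)'(w_n)|^2}. \]

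For the Siegel case, suppose $w$ lies in a Siegel disc $\Sigma$ of $p$. Since $\Sigma \cap S(f) = \Sigma \cap \P(p) = \emptyset$ and $f \in \B$, every component of $f^{-1}(\Sigma)$ is bounded (an unbounded component would yield a finite asymptotic value in $\Sigma \subset \C\setminus S(f)$). Pick such a component $\Omega$. Since $p(\Sigma)=\Sigma$ with $p|_\Sigma$ a bijection, the functional equation together with injectivity of $f$ on each component gives that $\lambda\Omega$ is also a full component of $f^{-1}(\Sigma)$; and because $\Omega$ is bounded while $|\lambda|>1$, the components $\lambda^k \Omega$ for $k \geq 0$ are pairwise distinct. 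Cylindrical area being invariant under $z \mapsto \lambda z$, each has the same positive cylindrical area, so $\areacyl(f^{-1}(\Sigma)\setminus\D)=\infty$. The change-of-variables identity
\[ \areacyl(f^{-1}(\Sigma)) \;=\; \int_\Sigma \sum_{z\in f^{-1}(w)} \frac{1}{|z|^2|f'(z)|^2}\,du\,dv \]
then forces~(\ref{1overzf}) to fail on a positive-measure subset of $\Sigma$, and hence---by the openness--closedness clause in Proposition~\ref{prop:infinitesimal}---throughout the component of $\C\setminus\P(p)$ containing~$\Sigma$.

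For the non-Siegel case I would establish $T(w)<\infty$. Koebe distortion, applied to the univalent inverse branches $g_{w_n}$ of $p^n$ at $w$ (defined since $w \notin \P(p)$), identifies $|(p^n)'(w_n)|^{-2}$ with the Euclidean area of the corresponding component of $p^{-n}(D_0)$ for any fixed small round disc $D_0 \ni w$ disjoint from $\P(p)$; summing yields $T(w) \asymp \sum_{n \geq 0} \area(p^{-n}(D_0) \cap B')$ for a slight enlargement $B' \supset B$. Showing this total area is finite whenever $w$ avoids every Siegel disc is the main obstacle, and constitutes the promised link between the area property of $f$ and the classical Poincar\'e series of $p$. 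Via the Fatou-component classification (excluding Siegel by hypothesis, and Herman rings as they do not occur for polynomials), one handles the remaining attracting and parabolic basins and the Julia dynamics separately, showing in each case that the derivatives $(p^n)'(w_n)$ along backward orbits re-entering $B$ grow sufficiently rapidly to force convergence.
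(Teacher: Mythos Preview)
Your reduction of~\eqref{1overzf} to a Poincar\'e-series-type sum over backward $p$-orbits landing in a fundamental annulus is exactly the engine of the paper's proof (compare~\eqref{eqn:poincareseries}). Your Siegel argument is different from the paper's---you exploit $\lambda$-invariance of cylindrical area on preimage components of $\Sigma$, whereas the paper directly exhibits preimages $z_k\in f^{-1}(w)$ along which $|z_k f'(z_k)|$ stays bounded---and it is essentially correct. Two small repairs are needed: the assertion $\Sigma\cap\P(p)=\emptyset$ can fail (a critical orbit of a higher-degree polynomial may enter $\Sigma$), and your boundedness argument for components of $f^{-1}(\Sigma)$ really requires $\overline{\Sigma}\cap S(f)=\emptyset$, which is false since $\partial\Sigma\subset\P(p)$. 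Both are fixed by replacing $\Sigma$ with a compactly contained $p$-invariant subdisc.

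The non-Siegel direction, however, has a genuine gap. You correctly isolate the problem as bounding $\sum_{n}\area(p^{-n}(D_0)\cap B')$, but the plan to ``handle the remaining attracting and parabolic basins and the Julia dynamics separately, showing in each case that the derivatives \dots\ grow sufficiently rapidly'' is neither carried out nor clearly viable for $w\in J(p)$: backward orbits that linger near a parabolic or Cremer point before re-entering $B$ admit no uniform expansion bound, and for periodic $w$ the disc $D_0$ is never wandering. The paper's resolution is short and sidesteps all of this. First, reduce to $w\in F(p)$: any $w\in J(p)\setminus\P(p)$ shares a component of $\C\setminus\P(p)$ with a nearby point in the basin of infinity, so Proposition~\ref{prop:infinitesimal} transfers the conclusion. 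Second, for $w$ in a (super)attracting basin, a parabolic basin, the basin of infinity, or a non-periodic Fatou component, choose $D_0$ small enough that $p^{-n}(D_0)\cap D_0=\emptyset$ for every $n\geq 1$. Then the sets $p^{-n}(D_0)$ are pairwise disjoint, and the full Poincar\'e series at exponent~$2$ is dominated by the spherical area of their union---hence finite. This disjointness trick, not a derivative-growth estimate, is the missing idea.
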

\begin{proof}
  It suffices to prove the theorem for $w\in F(p)$. Indeed, if $w\in J(p)$,
   then we can let $w'$ be a point in the basin of infinity of $p$ that
   belongs to the same component of $\C\setminus S(f)$ as $w$. 
  {By
   Proposition \ref{prop:infinitesimal}, 
   property~\eqref{1overzf} holds for $w'$ if and only if it holds for $w$.}

 Let $\eta<1$ be small enough so that 
   $f$ is injective on a neighbourhood of the closed disc of radius $\eta$ around $0$.
    We define 
    \[ A \defeq \{z\in\C\colon  \eta/|\lambda| < |z| < \eta \}, \]
 where $\lambda=p'(0)$. For simplicity, we may assume that $\eta$ is chosen such that 
  $f(\partial A)$ does not intersect the backwards orbit
    $\bigcup_{n=0}^{\infty} p^{-n}(w)$. 

 Suppose that $z\in f^{-1}(w)\setminus\overline{\D}$, and let $n\geq 1$ be minimal such that
   $|\lambda|^n \geq |z|/\eta$. Set $\tilde{z} \defeq z/\lambda^n$ and $\tilde{w} \defeq f(\tilde{z})$. By the functional relation 
    $f(\lambda z) = p(f(z))$, we have 
\begin{align*}
          p^n(f(\tilde{z})) &= f(\lambda^n\tilde{z}) = f(z) = w \qquad\text{and}\\
       |z|\cdot |f'(z)| &= |z|\cdot \frac{|(p^n)'(\tilde{w})| \cdot |f'(\tilde{z})|}{ \lambda^n} =
           |\tilde{z}| \cdot |f'(\tilde{z})| \cdot |(p^n)'(\tilde{w})|. \end{align*}
   In particular, by our assumption on $\eta$, we have $\tilde{z}\in A$, and hence the
    numbers $|\tilde{z}|$ and $|f'(\tilde{z})|$ are uniformly bounded away from 
    $0$ and $\infty$. Furthermore, 
      {for every $n\geq 1$, 
the correspondence between $z$ and $\tilde{w}$
    defines a bijection between the points of $f^{-1}(w)$ of modulus between
     $|\lambda^{n-1}|$ and $|\lambda^n|$ and the intersection
     $p^{-n}(w)\cap f(A)$.} So, for $N\geq 1$, 
    \begin{equation}\label{eqn:poincareseries}
        \sum_{z\in f^{-1}(w), 1\leq |z|\leq \lambda^N} \frac{1}{|z|^2|f'(z)|^2} \asymp
           \sum_{n=1}^{N}\sum_{\tilde{w}\in f(A)\cap p^{-n}(w)}\frac{1}{|(p^n)'(\tilde{w})|^2}.
    \end{equation}

  If $w$ does not belong to a Siegel disc, then it lies in the
    basin of infinity of $p$, an atttracting or parabolic basin, or a Fatou component that is not periodic. In each case, we can find
   a small disc $D$ around $w$ such that $p^{-n}(D)\cap D=\emptyset$ for all $n\geq 1$.  
{So} the sum
   \[ \sum_{n=0}^{\infty} \sum_{\tilde{w}\in p^{-n}(w)} \frac{1}{(p^n)^{\sharp}(\tilde{w})^2} \]
   (the \emph{Poincar\'e series} at exponent $2$) is comparable to the spherical area of the 
   backward orbit of the disc $D$ under $p$, {and hence
   finite.} 
   (Here we use 
    $(p^n)^{\sharp}$ to denote the derivative of $p^n$ as measured with respect to the spherical metric, both in the range and in the domain.)
    Since the spherical metric and the Euclidean metric are comparable on the bounded set $f(A)$, we see from~\eqref{eqn:poincareseries} that $f$ satisfies~\eqref{1overzf}. 

 On the other hand, suppose that $w$ belongs to a Siegel disc $U$ of $p$ of period $n$. 
   Since $p^n|_U$ is conjugate to an irrational rotation, there is a
   sequence $n_k$ such that $p^{n_k}|_U\to \id$. 

  Set $w_k \defeq (p^{n_k}|_U)^{-1}(w)$; then $w_k\to w$. 
   Fix $\zeta_0\in f^{-1}(w)$ and let $D$ be a neighbourhood of $\zeta_0$ on which $f$ is
     injective. By disregarding finitely many entries, we can ensure that $w_k\in f(D)$ for all $k\geq 1$. Let us define
      $\zeta_k \defeq (f|_D)^{-1}(w_k)$ and $z_k \defeq \lambda^{n_k}\cdot \zeta_k$; we may assume that $|z_k|\geq 1$ for all $k$. Then
      $f(z_k)=w$ and 
  \[ z_k\cdot f'(z_k) =
        \zeta_k \cdot f'(\zeta_k) \cdot (p^{n_k})'(w_k) \to
           \zeta_0 \cdot f'(\zeta_0). \]
  Thus
  \[
       \sum_{z\in f^{-1}(w)\setminus\D}\frac{1}{|z|^{2}|f^{\prime }(z)|^{2}} 
 \geq \sum_{k=1}^{\infty} \frac{1}{|z_k|^{2}|f^{\prime }(z_k)|^{2}} = \infty, \]
   as required.
\end{proof}

{We conclude the section by including a counterexample to Conjecture~\ref{conj:eremenkolyubich}.}

\begin{thm}[Poincar\'e functions of postcritically finite hyperbolic polynomials]\label{thm:eremenkolyubichcounterexample}
{
  Let $p$ be a polynomial such that every critical point of $p$ eventually maps to a superattracting cycle. 
    Let $z_0$ be a fixed point of $p$ that does not belong to the boundary of an invariant Fatou component, and let $f\in\classS$ be the corresponding Poincar\'e 
    function. Then $f$ has no asymptotic values, but nonetheless satisfies~\eqref{eqn:eremenkolyubich}.}
\end{thm}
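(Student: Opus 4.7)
The proof has three parts: (i) $f\in\classS$, (ii) $f$ has no finite asymptotic values, and (iii) $f$ satisfies~\eqref{eqn:eremenkolyubich}. Part~(i) is immediate from $S(f)=\P(p)$ (recalled at the beginning of Section~\ref{sec:linearizers}) together with the postcritical finiteness of $p$.

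For part~(iii), I would argue by a direct dynamical invariance computation in the spirit of the Siegel-disc case of Theorem~\ref{thm:arealinearizers}. Let $K(p)$ denote the filled Julia set of $p$, and choose $R>0$ with $K(p)\subset\overline{B(0,R)}$. Set $E\defeq f^{-1}(K(p))$. Since $K(p)=p^{-1}(K(p))$, the functional equation $f(\lambda z)=p(f(z))$ yields $\lambda E=E$. Because $p$ is hyperbolic, $K(p)$ contains the open superattracting basins and thus has positive planar Lebesgue measure; since $f$ is non-constant entire, so does $E$, and by $\lambda$-invariance the cylindrical area $c\defeq\areacyl(E\cap\{1\leq|z|\leq|\lambda|\})$ is strictly positive. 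Stacking $N$ fundamental annuli and using $E\subset\{|f|\leq R\}$,
\[
\int_{\{1\leq|z|\leq|\lambda|^N,\ |f(z)|\leq R\}}\frac{\dif x\,\dif y}{|z|^2}\ \geq\ cN,
\]
and dividing by $\log(|\lambda|^N)=N\log|\lambda|$ gives the desired lower bound $\liminf\geq c/\log|\lambda|>0$.

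The main work is part~(ii). Suppose toward contradiction that $a\in\C$ is an asymptotic value of $f$, with a logarithmic tract $U$ on which $f|_U\colon U\to B(a,\epsilon)\setminus\{a\}$ is a universal cover. Using $f(\lambda z)=p(f(z))$ and the fact that the composition of a universal cover with a (possibly branched) covering is again a universal cover, $\lambda U$ lies in a logarithmic tract over $p(a)$; hence $p(a)\in A(f)$ as well. Iterating, every forward iterate of $a$ under $p$ is an asymptotic value. Since $a\in S(f)=\P(p)$ and every critical orbit of $p$ terminates in a superattracting cycle, some superattracting periodic point $b$ of period $q$ is asymptotic. Noting that $f$ is simultaneously the Poincar\'e function of $p^q$ at $z_0$ with multiplier $\lambda^q$, one may replace $p$ by $p^q$ and reduce to the case where $b$ is a superattracting \emph{fixed} point of $p^q$, with immediate basin $U_b$ an invariant Fatou component. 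The tract over $b$ then represents an asymptotic access to $\partial U_b$ from infinity, and tracing it back through the functional equation for $p^q$ down to a neighborhood of $0$ forces $z_0=f(0)$ to lie on $\partial U_b$, contradicting the hypothesis. The main obstacle is making this last step rigorous: one has to verify that the universal-cover structure of the tract at infinity propagates through the inverse functional equation so as to produce a \emph{genuine} boundary landing at $z_0$ rather than merely a cluster point. Executing this cleanly requires careful analysis in logarithmic coordinates, exploiting the contraction of $p^q$ near $b$ to control the iterated asymptotic paths.
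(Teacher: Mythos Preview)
Your arguments for (i) and (iii) are correct and coincide with the paper's: the paper likewise observes $S(f)=\P(p)$ for (i), and for (iii) uses precisely the $\lambda$-invariance of $f^{-1}(K(p))$ together with positivity of $\area(K(p))$ to get linear growth of the cylindrical area in $\log r$.

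For (ii), the paper gives no direct argument at all: it simply cites \cite[Corollary~4.4]{helenajoern} as a black box. Your attempt at a self-contained proof is therefore genuinely different, but it has two problems beyond the technical gap you already flag. First, the reduction step does not preserve the hypothesis. You pass from $p$ to $p^q$ so that $b$ becomes a superattracting \emph{fixed} point with immediate basin $U_b$, and then aim to conclude $z_0\in\partial U_b$ ``contradicting the hypothesis''. But $U_b$ is $p^q$-invariant, i.e.\ $p$-periodic of period $q$, whereas the stated hypothesis only excludes $z_0$ from the boundary of \emph{$p$-invariant} Fatou components. In the paper's own example $p(z)=z^2-1$, there are no bounded $p$-invariant Fatou components, so the hypothesis is vacuous; yet the other fixed point $\alpha=(1-\sqrt{5})/2$ \emph{does} lie on $\partial U_0\cap\partial U_{-1}$. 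Thus even if you succeeded in deriving $z_0\in\partial U_b$, this would not contradict the hypothesis as stated when $q>1$. Second, the heuristic ``tracing back'' step is indeed the heart of the matter and is not a routine exercise in logarithmic coordinates; the cited result in \cite{helenajoern} establishes the precise relationship between asymptotic values of $f$ and the boundary structure of Fatou components at $z_0$, and reproducing it would require substantially more than what you sketch.

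In short: (i) and (iii) are fine and match the paper, but for (ii) you should either cite the external result as the paper does, or recognise that a direct proof requires both a sharper statement (periodic rather than invariant components) and a complete argument linking logarithmic singularities of $f$ to the local structure of $\partial U_b$ at $z_0$.
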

\begin{remark}
{
  As an example, one can take $p(z)=z^2-1$ and $z_0=(1+\sqrt{5})/2$. This Poincar\'e function was previously considered in 
    \cite[Appendix~B]{helenaconjugacy} as an example of a function $f\in\classS$ having no asymptotic values but critical points of arbitrarily high order.}
\end{remark}
\begin{proof}
{
  By \cite[Corollary~4.4]{helenajoern}, the function $f$ has no asymptotic values. Since the filled Julia set $K\defeq K(p)$ has non-empty interior, it has
   positive area. Now $f^{-1}(K)$ is completely invariant under multiplication by the multiplier $\lambda$ of $z_0$, by the 
   functional equation~\eqref{eqn:functionalequation}. Hence, for all $n\geq 1$, 
    \[  \int_{z\in\C: 1\leq |z|\leq |\lambda|^n \text{ and } f(z)\in U } \frac{\dif x \dif y}{|z|^2} = 
        n\cdot \int_{z\in\C: 1\leq |z| \leq |\lambda| \text{ and }f(z)\in U} \frac{\dif x \dif y}{|z|^2} =:\eps\cdot n, \]
    with $\eps>0$. Hence~\eqref{eqn:eremenkolyubich} holds, as required.}
\end{proof}

\section{A counterexample to the area property near infinity}\label{sec:counterexample}

\begin{proof}[Proof of Theorem \ref{thm:counterexample}]
 We shall now show that there exists an entire function $f\in\B$ that violates the area property near infinity, using a construction 
   from \cite[Section~7]{approximationhypdim}. As indicated in the introduction, 
  {and stated in 
    \cite{approximationhypdim} (following Theorem 1.11 in that paper)}, 
   it can be shown that the 
   exact function considered in that paper, which is a hyperbolic entire function with full hyperbolic dimension, also violates
   the area property. However, it shall be slightly more convenient for us to use the same construction, but with different
   parameters. {We shall remark on the original example following the completion of the proof.}

 As in \cite{approximationhypdim}, the proof proceeds in two steps:
  \begin{itemize}
    \item First, a simply-connected domain $V$ is constructed that does not intersect its 
      $2\pi i$-translates, along with a conformal isomorphism 
    \[ G\colon  V\to  H \defeq \{x+iy \colon  x > - 14\log_+|y|\}, \]
      where again $\log_+|y| = \max(0,\log|y|)$. The tract is chosen such that the (not globally defined)
      function $g\colon\exp(V)\to \C$; $g(\exp(z)) = \exp(G(z))$ does not have the area property near
      infinity.
   \item By \cite[Theorem 1.7]{approximationhypdim}, 
     the function $g$ can be approximated by a transcendental entire function $f\in\B$ with
      $|f(z)-g(z)| = O(1/|z|)$ for $z\in \exp(V)$, and $|f(z)|=O(1/|z|)$ elsewhere. 
       It can then be checked that $f$ also does not have the area property
       near infinity. 
 \end{itemize}

 For the first step, we shall use \cite[Lemma 7.2 and Theorem 7.4]{approximationhypdim}, which 
   imply that the domain $V$ and the function $G$ can be chosen such that the following 
   properties hold.
 \begin{enumerate}[\normalfont(1)]
  \item $V\subsetneq\{ a + ib\colon  a>1 \text{ and }|b|<\pi\}$;\label{item:stripV}
  \item there are $C_1>1$ and $Q\geq 1$ such that 
       $\re z /C_1 \leq \log_+ |G(z)| \leq C_1\re z$ for all $z\in V$ with $\re z \geq Q$.
 \label{item:growthofG}
  \item there exist constants $C_2>0$, $k_0\geq Q/2\pi$ 
     and a collection of points $(\zeta_k)_{k\geq k_0}$ in $V$ such that
     $\re \zeta_k = 2\pi k$, $\dist(\zeta_k,\partial V)\geq C_2$ 
    and $\re G(\zeta_k) = 1$ for all $k\geq k_0$.\label{item:zetak}
 \end{enumerate}

We remark that $V$ consists of a 
   central strip of fixed width, to which a sequence of ``side chambers'' are attached;
   see \cite[Figure 1]{approximationhypdim}. 
  These are equally spaced in a $2\pi$-periodic manner, and
   the $k$-th chamber is connected to the central strip by a small opening of size $\eps_k$. The 
   points $\zeta_k$ are precisely the mid-points of these chambers, and the opening size $\eps_k$ 
    is chosen such that $\re G(\zeta_k)=1$. This ensures property
   \ref{item:zetak}. Property \ref{item:growthofG} is a simple consequence of the description of the tract and
   classical geometric function theory. For details, we refer to \cite[Section 7]{approximationhypdim}.

 Let us verify that, when $G$ is chosen with properties \ref{item:stripV} to \ref{item:zetak}, we have 
    \begin{equation}\label{eqn:areaG} 
      \sum_{m\in\Z} |(G^{-1})'(1 + 2\pi i m)|^2 = \infty. 
     \end{equation}
  As in Proposition~\ref{prop:infinitesimal}, the formula~\eqref{eqn:areaG} implies via Koebe's Distortion Theorem
     that $g$ does not have the area property
     near infinity. (By this, we mean that $g^{-1}(K)$ has infinite cylindrical area for any
     compact set $K\subset \C\setminus\overline{\D}$ with nonempty interior.)

\begin{claim}[Claim 1]
 There is a constant $C_3>1$ such that $k/C_3\leq \dist(G(\zeta_k),\partial H)\leq C_3\cdot k$ for
   all $k\geq k_0$. 
\end{claim}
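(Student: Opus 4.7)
The plan is to identify the image point $G(\zeta_k)$ explicitly, use property~\ref{item:growthofG} to control its modulus, and then perform an elementary Euclidean distance calculation. The key observation is that $G(\zeta_k)$ lies on the vertical line $\re w = 1$, so its distance to $\partial H$ will essentially be the horizontal distance to the boundary curve at roughly the same height, which is logarithmic in the height. Everything thus reduces to showing that the imaginary part of $G(\zeta_k)$ is exponentially large in $k$.

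Concretely, writing $G(\zeta_k) = 1 + i y_k$ (using $\re G(\zeta_k) = 1$ from property~\ref{item:zetak}), I would apply property~\ref{item:growthofG} at $z=\zeta_k$. This is permissible since $\re\zeta_k = 2\pi k \geq 2\pi k_0 \geq Q$ for $k\geq k_0$, and yields
\[
   \frac{2\pi k}{C_1} \;\leq\; \log_+ |G(\zeta_k)| \;\leq\; 2\pi k\, C_1.
\]
Since $|G(\zeta_k)|^2 = 1+y_k^2$, once $k$ is large enough that $|y_k|\geq 2$ we conclude $\log|y_k|\asymp k$; in particular, $|y_k|$ grows at least exponentially in $k$.

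For the distance estimate, recall that $\partial H$ is the graph $\{s+it\colon s = -14\log_+|t|\}$. The upper bound is immediate: the boundary point $-14\log|y_k| + iy_k$ lies at Euclidean distance $1 + 14\log|y_k|\leq \const\cdot k$ from $G(\zeta_k)$. For the lower bound, let $s+it\in\partial H$ be arbitrary and split into two cases. If $|t|\leq |y_k|/2$ or $|t|\geq 2|y_k|$, then $|y_k - t|\geq |y_k|/2$, which dominates any linear function of $k$ because $|y_k|$ is exponentially large in $k$. Otherwise $|y_k|/2\leq |t|\leq 2|y_k|$, so $\log|t| \geq \log|y_k| - \log 2$, and the horizontal component $|1-s| = 1+14\log_+|t|$ is at least $\const\cdot k$. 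Combining both cases, and enlarging $C_3$ to absorb the finitely many small values of $k$ for which the $\asymp$ estimates have not yet kicked in, yields the claim.

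The argument is a direct computation rather than a conceptual step, and I do not anticipate any real obstacle. The only small subtlety lies in the lower bound, where one must rule out an unexpectedly short slanted path from $G(\zeta_k)$ to $\partial H$; the dichotomy on $|t|$ above handles this and is the only place where the exponential gap between $|y_k|$ and $\log|y_k|$ is actually used.
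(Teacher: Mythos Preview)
Your proposal is correct and follows essentially the same approach as the paper: use property~\ref{item:growthofG} to get $\log_+|G(\zeta_k)|\asymp k$, then compare with the geometry of $\partial H$ along the line $\re w=1$. The paper compresses the distance estimate into the single inequality $\dist(\zeta,\partial H)\leq 15\log_+|\zeta|$ for $\zeta\in H$ with $\re\zeta=1$ and declares the lower bound ``analogous''; your dichotomy on $|t|$ simply makes that analogous step explicit.
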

\begin{subproof}
  By~\ref{item:zetak} above, we have $\re \zeta_k = 2\pi k\geq Q$, 
   and hence $\log_+ |G(z)| \leq 2\pi C_1 k$ by~\ref{item:growthofG}. On the other hand,
   $\dist(\zeta,\partial H)\leq 15\log_+\im\zeta \leq 15 \log_+|\zeta|$ whenever
   $\zeta\in H$ with $\re\zeta = 1$.
   This implies the upper bound; the lower bound follows analogously.
\end{subproof}

Since $G$ is a conformal isomorphism, we see from the standard estimate~\eqref{eqn:standardestimate}, together with 
    Claim 1 and~\ref{item:zetak}, that
    \begin{equation}\label{eqn:zetakderiv}
    |G'(\zeta_k)| = \frac{\rho_V(\zeta_k)}{\rho_H(G(\zeta_k))} \leq 
         4\cdot \frac{\dist(G(\zeta_k),\partial H)}{\dist(\zeta_k, \partial V)} \leq 
       \const\cdot k. \end{equation}

\newcommand{\M}{\mathcal{M}}
Now, for $k\geq k_0$, consider the set $\M_k\defeq\{m\in\N: |2\pi m - \im G(\zeta_k)|\leq k/(5C_3)\}$. By~\eqref{eqn:zetakderiv} and Koebe's
  Distortion Theorem, we have
 \[ |(G^{-1})'(1 + 2\pi i m) \geq \frac{\const}{k} \]
 when $m\in \M_k$. Also note that
  \[ \# \M_k \geq \left\lfloor \frac{k}{5\pi C_3}\right\rfloor \geq  \const\cdot k\]
 for all $k\geq k_1 \defeq \max(\lceil5\pi C_3\rceil,k_0)$ (for a constant independent of $k$). 

\begin{claim}[Claim 2]
  The sets $\M_k$ are pairwise disjoint.
\end{claim}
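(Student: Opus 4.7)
My plan is to prove Claim 2 by contradiction: assume $m\in\M_k\cap\M_{k'}$ for some $k\neq k'$ with $k,k'\geq k_1$, and derive a contradiction by sandwiching the hyperbolic distance $d_V(\zeta_k,\zeta_{k'})$ between an upper bound (coming from the closeness of $G(\zeta_k)$ and $G(\zeta_{k'})$ to the common point $w\defeq 1+2\pi im$) and a lower bound (coming from the strip containment of $V$).

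For the upper bound, I would use that $|w-G(\zeta_k)|=|2\pi m-\im G(\zeta_k)|\leq k/(5C_3)$, together with $\dist(G(\zeta_k),\partial H)\geq k/C_3$ from Claim~1, so that the straight segment from $G(\zeta_k)$ to $w$ lies entirely in $H$ and remains at Euclidean distance at least $k/C_3-k/(5C_3)=4k/(5C_3)$ from $\partial H$. The standard estimate~\eqref{eqn:standardestimate} then bounds the hyperbolic length of that segment by
\[
\frac{2\cdot (k/(5C_3))}{4k/(5C_3)} = \frac{1}{2},
\]
and the same holds for the segment from $w$ to $G(\zeta_{k'})$. Since $G\colon V\to H$ is a hyperbolic isometry, the triangle inequality gives $d_V(\zeta_k,\zeta_{k'})=d_H(G(\zeta_k),G(\zeta_{k'}))\leq 1$.

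For the lower bound, property~\ref{item:stripV} gives $V\subset\{a+ib\colon |b|<\pi\}$, whose hyperbolic density equals $1/(2\cos(\im z/2))$ and is bounded below by $1/2$ (attained on the real axis). By monotonicity of the hyperbolic density under domain inclusion, $\rho_V\geq 1/2$ on $V$. Any rectifiable curve in $V$ joining $\zeta_k$ to $\zeta_{k'}$ has Euclidean length at least $|\re(\zeta_k-\zeta_{k'})|=2\pi|k-k'|\geq 2\pi$, hence hyperbolic length at least $\pi$. Thus $d_V(\zeta_k,\zeta_{k'})\geq \pi$, contradicting the upper bound since $\pi>1$.

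The main subtlety is numerical: the factor $5$ in the definition of $\M_k$ is chosen precisely so that the upper bound $1/2+1/2=1$ falls strictly below the lower bound $\pi$. The proof does not require any structural information about the chamber layout of $V$ beyond the strip containment; the narrow passages between chambers would only improve the lower bound on $d_V$.
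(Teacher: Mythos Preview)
Your proof is correct and follows essentially the same route as the paper's own argument: both bound $d_H(G(\zeta_k),1+2\pi im)\leq 1/2$ for $m\in\M_k$ via the standard estimate~\eqref{eqn:standardestimate} applied along the vertical segment in $H$, and both bound $d_V(\zeta_k,\zeta_{k'})$ from below using the containment $V\subset\{|b|<\pi\}$ and the comparison principle. The only difference is cosmetic: the paper leaves the lower bound as ``strictly larger than $1$ by direct computation'', whereas you carry out that computation explicitly (density $\geq 1/2$ on the strip, Euclidean length $\geq 2\pi$, hence hyperbolic length $\geq\pi$).
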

\begin{subproof}
 Consider the vertical line segment $S$ connecting the points $G(\zeta_k)-ki/(5C_3)$ and
    $G(\zeta_k)+ki/(5C_3)$.
 By Claim 1, we have 
    \[ \dist( \zeta , \partial H)\geq  \frac{4k}{5C_3} \] 
  for all $\zeta\in S$. By the standard estimate~\eqref{eqn:standardestimate}, 
   we see that the hyperbolic distance in $H$ between the midpoint  $G(\zeta_k)$ and any point of 
   $S$ is at most $1/2$. Recall that $1+2\pi m\in S$ for all $m\in \M_k$, by definition.

On the other hand, the 
   hyperbolic distance in $V$ between $\zeta_k$ and $\zeta_{k'}$, with $k\neq k'$, is strictly
   larger than $1$. Indeed, the hyperbolic distance between $\zeta_k$ and $\zeta_{k'}$ 
    in the strip 
   $\{a+ib \colon |a|<\pi\}\supset V$ is strictly larger than one by direct computation, 
   and the claim follows from the comparison principle of hyperbolic geometry. 
\end{subproof}

 Combining all the estimates, we see that
    \begin{align*}
           \sum_{m\in\Z} |(G^{-1})'(1 + 2\pi i m)|^2 &\geq
           \sum_{k=k_1}^{\infty} \sum_{m\in \M_k} |(G^{-1})'(1 + 2\pi i m)|^2 \\ &\geq
         \const \cdot \sum_{k=k_1}^{\infty} \#\M_k \cdot \frac{1}{k^2} \geq
         \const \cdot \sum_{k=k_1}^{\infty}\frac{1}{k} = \infty. 
   \end{align*}
 This establishes~\eqref{eqn:areaG}.

Now let $f\in\B$ be the entire function satisfying 
    $|f(z)-g(z)|=O(1/|z|)$ for $z\in \exp(V)$, whose existence is guaranteed by
    \cite[Theorem 1.7]{approximationhypdim}. It is not difficult
    to verify, using the above information about the construction, that 
    $f$ does not have the area property near infinity. Instead, we shall derive this fact
    using quasiconformal equivalence. By \cite[Theorem 1.8]{approximationhypdim}, there is $R>0$ and 
    a quasiconformal map
    $\phi\colon \C\to\C$ such that $g(\phi(z))=f(z)$ whenever $|f(z)| > R$. Moreover, by
    \cite[Theorem~6.3]{approximationhypdim},
    the map 
    $\phi$ can be chosen such that $\phi(z)=z$ on $f^{-1}(D)$, where $D$ is a simply-connected 
   neighbourhood
    of $S(f)$. 

  In particular, the dilatation of $\phi$ is supported on the set
    \[ \{z\in\C\colon  |f(z)|\leq R\text{ and }f(z)\notin D\}. \] 
   If $f$ had the area property near $\infty$, 
    then this set would have finite cylindrical area, and hence
   $\phi$ would preserve the property of having finite cylindrical area by 
   Proposition
    \ref{prop:areapropertyqc}. But then $g$ also has the area property (due to
   the functional relation $g(\phi(z))=f(z)$), which is a contradiction.
\end{proof}
\begin{remark}
{%
  The function $G$ constructed in 
   \cite[Section~7]{approximationhypdim} has the same properties as above, except
    that, in~\ref{item:zetak}, the points $\zeta_k$ satisfy
    $\re G(\zeta_k)\asymp k$, rather than $\re G(\zeta_k) = 1$. (This is what enables the 
    construction of a suitable iterated function system, which gives rise to the desired
    hyperbolic sets.) Moreover, this is not required to hold for \emph{all}
    sufficiently large $k$ (although that can be arranged), but only for those satisfying
    $K_i < 2\pi k + R_0 < 3K_i$ for some $i$, where $(K_i)$ is a (possibly rapidly) increasing
   sequence and $R_0$ is a universal constant; see \cite[Corollary~7.5]{approximationhypdim}. Without loss of generality, let us assume that
   $K_0\geq \max(2\pi, R_0)$.}

{%
 To see that this function also satisfies~\eqref{eqn:areaG}, and hence violates the area property near infinity, define a sequence
   $(\tilde{\zeta}_k)$ by $\tilde{\zeta}_k \defeq G^{-1}(1 + i\cdot \im G(\zeta_k))$. By~\ref{item:growthofG}, we see that
   $\log_+ \im G(\zeta_k) \geq \const\cdot k$, and hence the hyperbolic distance (in $H$) between $\zeta_k$ and $\tilde{\zeta}_k$ is
   bounded from above, independently of $k$. Thus $\dist(\tilde{\zeta}_k,\partial V)\geq \const$, independently of $k$, and we 
   can apply the same argument as above, replacing $\zeta_k$ by $\tilde{\zeta}_k$. We conclude that indeed
    \begin{align*} \sum_{m\in\Z} |(G^{-1})'(1+2\pi im)|^2 &\geq
         \sum_{i\geq 0} \sum_{k=\lfloor (K_i-R_0)/2\pi\rfloor +1}^{\lceil (3K_i-R_0)/2\pi\rceil -1} \frac{\const}{k}  \\
       &\geq \const\cdot \sum_{i\geq 0} \left\lfloor\frac{2K_i}{2\pi}\right\rfloor\cdot \frac{1}{3K_i-R_0} \geq
          \const\cdot \sum _{i\geq 0} \frac{K_i}{2K_i} = \infty. \end{align*}}
  \end{remark}

\section{Quadratic differentials}\label{sec:qd}
 In this section, we discuss a matter that is closely connected to 
   the area property, namely the behaviour of \emph{quadratic differentials} under
   the pushforward by an entire function. Recall that a \emph{quadratic differential} is a tensor of the form
   $q(z)\dif z^2$ (in local coordinates). {If the local coefficient $q$ can always be chosen to be
   measurable, holomorphic or meromorphic, then the differential itself is called measurable, holomorphic or meromorphic. 
   A \emph{pole} of a meromorphic quadratic differential is then a point near which the local coefficient must have a pole; observe that locally such
   a differential can always be written in the form $\dif z^2/z^d$, where $d$ is the \emph{order} of the pole. Any quadratic differential gives
   rise to an associated area form $|q(z)|\,\modd{z^2}$; the total area 
              \[ \int |q(z)|\,\modd{z^2} \]
   is referred to as the \emph{(total) mass} of the differential. 
   By an elementary calculation, the mass of a meromorphic quadratic differential is finite near a simple pole, but infinite near a pole of higher order.}

 {Quadratic} differentials play a key role in complex dynamics 
   and complex
   analysis. Of particular interest is the \emph{pushforward} operator: given an analytic map
   $f\colon U\to V$, and a measurable (in the following usually meromorphic or even holomorphic)
   quadratic differential $q = q(z)\dif z^2$ on $U$, 
    its pushforward is defined to
    be the formal sum 
    \[ f_*q \defeq \left(\sum_{f(z)=w} \frac{q(z)}{f'(z)^2}\right)\dif w^2 . \]
   Of course, in general this sum may or may not converge; if it converges absolutely, we shall say that $q$ is \emph{$f$-summable}. 
    
  If $q$ is a meromorphic
   quadratic differential on the Riemann sphere with at most finite poles, then $q$ has
    finite total \emph{mass}. Since the pushforward under a holomorphic map can never increase mass (and may in fact decrease it 
   due to possible cancellations), it follows that such a 
    differential is always summable. On the other hand, for quadratic differentials with at worst double poles the total mass is infinite, so this argument {cannot be
    used to show that the differential is $f$-summable}. 
  {(Quadratic differentials with double poles can play an important role in holomorphic dynamics;
   for example, they appear when 
    bounding the number of non-repelling cycles, see \cite{fatoushishikura}.)} However, if
    $f$ is entire and has the area property, then 
    the pushforward converges absolutely at least for $w\notin S(f)$. 
    If furthermore $f\in\B$, then we can say more about the nature of the singularity of this pushforward
     near $\infty$. This is a consequence of the following observation.

 \begin{prop}[Quadratic differentials and logarithmic singularities]\label{prop:doublepole}
   Let $V\subsetneq\C$ be an unbounded simply-connected domain, and let 
     $f:V\to\C\setminus\overline{\D}=: W$ be a holomorphic universal covering map. 
      Suppose furthermore that 
     \begin{equation}\label{eqn:areatracts}
       \sum_{z\in f^{-1}(w)\setminus\D} \frac{1}{|z|^2\cdot|f'(z)|^2} < \infty 
      \end{equation}
    for all $w\in W$. 

   Let $q=q(z)\dif z^2$ be a holomorphic quadratic differential on $V$ for which there is 
     a constant $C>0$ such that
    $|q(z)| \leq  C/|z|^2$ for all sufficiently large $z\in V$. Then
    $q$ is $f$-summable for all $w\in W$ and the pushforward
     $f_*q$ has at most a double pole at $\infty$. 
 \end{prop}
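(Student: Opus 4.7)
The plan is first to verify $f$-summability by a dichotomy of preimages, and then to reduce the double-pole claim to a boundedness statement via the universal-cover factorization of $f$.

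For $f$-summability, fix $w\in W$ and choose $R_0$ large enough that $|q(z)|\le C/|z|^2$ for $z\in V$ with $|z|\ge R_0$. Split $\sum_{z\in f^{-1}(w)}|q(z)|/|f'(z)|^2$ accordingly. The portion with $|z|\ge R_0$ is dominated by $C$ times the sum in \eqref{eqn:areatracts}, which is finite by hypothesis. The portion with $|z|<R_0$ consists of only finitely many discrete preimages in the bounded set $V\cap\overline{B_{R_0}(0)}$; on this set $|q|$ is bounded (holomorphy) and $|f'|$ is bounded below (since $f$ is a covering), so the contribution is finite.

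For the double-pole claim, since $V$ is simply connected and $f$ is a universal covering, we may factor $f=\exp\circ F$ where $F\colon V\to\tilde{W}:=\{\re u>0\}$ is a conformal isomorphism. Transporting $q$ to $\tilde{W}$ yields $\tilde{q}(u)\,du^2:=q(F^{-1}(u))(F^{-1})'(u)^2\,du^2$, and the pushforward takes the form
\[
f_*q(w)=\frac{H(w)}{w^2},\qquad H(w):=\sum_{k\in\Z}\tilde{q}(u+2\pi ik),
\]
with $u$ any branch of $\log w$. Since the sum is $2\pi i$-periodic in $u$ and holomorphic, $H$ descends to a holomorphic function on $W$. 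Hence $f_*q$ has a pole of order at most $2$ at $\infty$ if and only if $H$ is bounded in a neighbourhood of $\infty$ in $W$.

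The bound $|q(z)|\le C/|z|^2$ combined with $|f'(z_k)|=|w|\cdot|F'(z_k)|$ gives
\[
|H(w)|\le C\sum_{k\in\Z}\frac{|(F^{-1})'(u+2\pi ik)|^2}{|z_k|^2}=C\,|w|^2\sum_{z\in f^{-1}(w)\setminus\D}\frac{1}{|z|^2\,|f'(z)|^2}.
\]
Thus the proof reduces to showing that $|w|^2$ times the sum in \eqref{eqn:areatracts} is bounded as $|w|\to\infty$. This last step is the main obstacle. Assuming $0\notin V$ (which may be arranged by translation of coordinates), this quantity equals $\sum_{k\in\Z}|g(u+2\pi ik)|^2$, where $g:=(F^{-1})'/F^{-1}$ is holomorphic on $\tilde{W}$, so it is subharmonic in $u$. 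My plan for this final estimate is to apply the subharmonic mean-value inequality on a disc of fixed radius around $u_0=\log w_0$, and convert the resulting local integral over $\tilde{W}$ into an integral of the density from \eqref{eqn:areatracts} over a compact subset of $W$ via the exponential map; the integrated form of the area property (from Proposition~\ref{prop:infinitesimal}) together with Koebe distortion for $F^{-1}$ on one fundamental domain then yields a uniform pointwise bound on $\sum_k|g(u+2\pi ik)|^2$ as $\re u\to\infty$, completing the argument.
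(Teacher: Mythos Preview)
Your setup is correct and matches the paper's approach: the $f$-summability argument is fine, and your factorization $f=\exp\circ F$ (equivalently, the paper's logarithmic change of variable $\phi=F^{-1}\circ\log$) reduces the problem to controlling $S(u_0):=\sum_k|g(u_0+2\pi ik)|^2=|w_0|^2\sum_{z\in f^{-1}(w_0)}|z|^{-2}|f'(z)|^{-2}$. But your final step has a real gap. You assert that the double-pole conclusion follows once $S$ is \emph{bounded}, and propose to obtain this from the subharmonic mean-value inequality on a disc of fixed radius~$r$. The mean-value estimate gives
\[
S(u_0)\le\frac{1}{\pi r^2}\int_{B_r(u_0)}S(u)\,dA(u)=\frac{1}{\pi r^2}\,\areacyl\bigl(f^{-1}(\exp(B_r(u_0)))\bigr),
\]
but the image $\exp(B_r(u_0))\subset W$ is a disc around $w_0$ that \emph{moves to infinity} with $w_0$; it is not contained in any fixed compact set, so the area hypothesis~\eqref{eqn:areatracts} gives no uniform bound. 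Worse, by Koebe distortion each summand $|g(u+2\pi ik)|^2$ is essentially constant on $B_r(u_0)$, so $\int_{B_r(u_0)}S\asymp\pi r^2\,S(u_0)$ and the inequality is vacuous.

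The paper does not attempt to prove that $S$ is bounded; instead it applies Koebe on the \emph{large} disc $B_{\re u_0}(u_0)\subset\tilde W$ to compare $|\phi'(u_0+2\pi ik)|$ with $|\phi'(1+i\im u_0+2\pi ik)|$, obtaining $S(u_0)\le\const\,(\re u_0)^2\cdot S(1+i\im u_0)$. The right-hand factor is now evaluated on the fixed compact circle $\{|w|=e\}$, where the area hypothesis does give a uniform bound. This yields only $|\sigma(w)|\le\const\,(\log|w|)^2/|w|^2$, which is $o(1/|w|)$ but not $O(1/|w|^2)$. The missing idea is that holomorphicity of $\sigma$ upgrades this for free: since $\sigma$ is holomorphic on $\{|w|>1\}$ with $\sigma(w)=o(1/|w|)$, its Laurent coefficients $a_n$ vanish for $n\ge-1$, and hence $\sigma(w)=O(1/|w|^2)$, i.e.\ $H$ is bounded. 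Your approach tries to get $H$ bounded directly via the triangle inequality $|H|\le C\,S$, but this forfeits the cancellation that makes the result true.
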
 
 \begin{proof}
   For convenience,  we may assume that
    $0\notin \cl{V}$ (restricting $V$ to the complement of a slightly larger disc and reparametrizing, if necessary). This ensures
     that we can write $q=(\rho(z)/z^2)\dif z^2$ for all $z\in\V$, with $\rho(z)\leq \tilde{C}$ for a suitable constant $\tilde{C}>0$. 
     The pushforward is then the formal sum
    \[ f_* q = \left(\sum_{z\in f^{-1}(w)}\frac{\rho(z)}{f'(z)^2 z^2}\right) \dif w^2 =: \sigma(w)\dif w^2. \] 
   By virtue of the area property~\eqref{eqn:areatracts} for $f$, the sum is absolutely convergent
   for $w\in W$, so $\sigma$ is defined and holomorphic
    on $W$. (Recall that the sum~\eqref{eqn:areatracts} converges locally uniformly in $w$ due to the Koebe Distortion Theorem.) We must show that 
    \[ |\sigma(w)| = o(1/|w|)\qquad \text{as $w\to 0$.} \]

  To begin, let us perform a \emph{logarithmic change of variable} in the sense of
    Eremenko and Lyubich \cite{alexmisha} as follows. 
    Let $U$ be a connected component of $\exp^{-1}(V)$. Since $f\circ\exp:U\to W$ is
    a universal covering map, there is a conformal isomorphism $\phi$ from     $\HH \defeq \{a+ib \colon a>0\}$ to $U$
    such that $f(\exp(\phi(\zeta))) = \exp(\zeta)$ for all $\zeta\in\HH$. 

  Let $\zeta\in\HH$ with $\re \zeta > 1$, and define $\tilde{\zeta} \defeq 1 + \im \zeta$.    
  By Koebe's Distortion Theorem, applied to the restriction of $\phi$ to the 
   disc of radius $\re \zeta$ around $\zeta$, we see that 
    \[ |\phi'(\zeta)| \leq 8\re \zeta\cdot |\phi'(\tilde{\zeta})|. \] 

 Now let $w\in W$ with ${|w|>e}$, and let $\zeta_0\in \exp^{-1}(w)$. We set
  $\tilde{w} \defeq e\cdot w/|w|$, so that $|\tilde{w}| =e$. 
  Differentiating the relation $f \circ \exp\circ \phi = \exp$, we see that 
        \[ f'(z) \cdot z  = \frac{e^{\zeta}}{\phi'(\zeta)}, \]
    {whenever} $z=e^{\phi(\zeta)}$. Hence
    \begin{align}
       |\sigma(x)| &\leq \sum_{z\in f^{-1}(w)} \frac{|\rho(z)|}{|z|^2\cdot |f'(z)|^2} \leq 
         \frac{\tilde{C}}{|w|^2}\cdot \sum_{m\in\Z} |\phi'({\zeta_0} + 2\pi i m)|^2 \label{eqn:doublepolefinalestimate}
   \\ 
       &\leq 
         \frac{8\tilde{C}(\re {\zeta_0})^2}{|w|^2}\cdot \sum_{m\in\Z} |\phi'( 1 + (2\pi m + \im \zeta)i)|^2 \notag
    \\
        &=
           \frac{8\tilde{C} (\log|w|)^2}{|w|^2}\cdot \sum_{z\in f^{-1}(\tilde{w})} 
                \frac{e^2}{|z|^2\cdot|f'(z)|^2} \notag
     \\ 
       &\leq 
         \const \cdot \frac{(\log|w|)^2}{|w|^2} 
            \max_{|\omega| = e} \sum_{z\in f^{-1}(\omega)} 
                \frac{1}{|z|^2\cdot|f'(z)|^2}. \notag
    \end{align}
    The maximum on the right-hand side 
      is finite by~\eqref{eqn:areatracts}. So indeed
      \[ |\sigma(x)| \leq \const\cdot \frac{(\log |w|)^2}{|w|^2} = o(1/|w|)\qquad\text{as $w\to\infty$}. \qedhere \]
 \end{proof}

We can deduce the following global statement.

\begin{cor}[Pushforwards of QD under class $\classS$ maps with the area property]\label{cor:doublepole}
  Let $f\in\classS$ have the area property, and let $q$ be a meromorphic quadratic differential on $\Ch$, with at most double poles.
   Then $f_*q$ is also a meromorphic quadratic differential on $\Ch$ with at most double poles.

 More precisely, $f_*q$ 
  \begin{itemize}
    \item has at most double poles at $\infty$, asymptotic values of $f$, and at the images of double poles of $q$;
    \item has at most simple poles at non-asymptotic critical values of $f$ and at the images of simple poles of $q$;
    \item is holomorphic elsewhere.
  \end{itemize}
\end{cor}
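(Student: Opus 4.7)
The plan is to introduce the finite exceptional set
\[ E \defeq \{\infty\} \cup S(f) \cup f(\{\text{poles of }q\}) \subset \Ch, \]
to establish that $f_*q$ converges absolutely and locally uniformly on $\Ch \setminus E$ (and hence is holomorphic there), and then to analyse its local behaviour at each point of $E$.  The convergence step is straightforward: since $q$ is a meromorphic quadratic differential on $\Ch$ with at most a double pole at $\infty$, passing to the chart $u = 1/z$ shows that $|q(z)| \leq C/|z|^2$ for $|z|$ large (away from the finitely many poles of $q$), so $|q(z)/f'(z)^2| \leq C/(|z|^2 |f'(z)|^2)$ for large $z$; Proposition~\ref{prop:infinitesimal} together with the Koebe Distortion Theorem then gives absolute local uniform convergence of the pushforward sum on $\Ch \setminus E$.

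At a non-asymptotic point $w_0 \in E$, preimages do not accumulate, and the sum splits into finitely many local contributions plus a tail at infinity that is controlled by the previous paragraph.  At a critical value $v$ of $f$ with a critical preimage $c$ of local degree $k$, normalising coordinates so that $f(z) - v = (z-c)^k$ and Taylor expanding the local coefficient $Q$ of $q = Q(z)\,\dif z^2$ at $c$ expresses the sum over the $k$ local branches as a series in $(w-v)^{1/k}$; the $k$-fold rotational symmetry forces all fractional-exponent terms to cancel, and the only surviving singular term has exponent $-1$, yielding at most a simple pole of $f_*q$ at $v$.  At a regular preimage $z_0$ of $w_0$ that is a simple (respectively double) pole of $q$, a local change of coordinate likewise gives at most a simple (respectively double) pole of $f_*q$ at $w_0$.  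Summing finitely many such local contributions with the holomorphic tail yields the asserted pole orders at all non-asymptotic points of $E$.

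The main obstacle is the analysis at $w_0 = \infty$ and at the asymptotic values of $f$, where preimages accumulate and Proposition~\ref{prop:doublepole} is needed.  For $w_0 = \infty$: since $f \in \classS$, pick $R>0$ with $S(f)$ and the poles of $q$ all contained in $\{|z|<R\}$; each unbounded component $V$ of $f^{-1}(\{|w|>R\})$ is then simply-connected, and $f|_V$ is a holomorphic universal covering onto $\{|w|>R\}$.  A linear rescaling of the target reduces this to the setting of Proposition~\ref{prop:doublepole}, and inspection of the proof (specifically~\eqref{eqn:doublepolefinalestimate}) shows that the multiplicative constant on each tract is controlled by the global area sum, so summing over all tracts preserves the required decay and yields an at-most-double pole of $f_*q$ at $\infty$.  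For an asymptotic value $v \in S(f)$, I would apply the M\"obius transformation $T(w) = 1/(w-v)$, which carries a punctured disc around $v$ to a neighbourhood of $\infty$: the preimages under $f$ of points close to $v$ lie in logarithmic tracts where $|z|$ is large, so the bound $|q(z)| \leq C/|z|^2$ still holds on each tract.  Applying Proposition~\ref{prop:doublepole} to $T \circ f$ tract by tract and translating the conclusion back through $T$ gives an at-most-double pole of $f_*q$ at $v$, independently of whether $q$ itself has a pole at $v$ (the behaviour of $q$ near $v$ on the $w$-side does not enter, since $q$ is holomorphic on the tracts themselves).  Combining all local contributions yields $f_*q$ as a meromorphic quadratic differential on $\Ch$ with the asserted pole structure.
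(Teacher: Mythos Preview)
Your overall strategy~--- define the finite exceptional set $E$, establish holomorphicity on $\Ch\setminus E$ via the area property, then analyse each point of $E$~--- is exactly the paper's. But the local analysis has two related gaps.

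At a non-asymptotic singular value $w_0$ you write that the sum ``splits into finitely many local contributions plus a tail at infinity that is controlled by the previous paragraph''. A non-asymptotic critical value can have \emph{infinitely} many critical preimages (e.g.\ $f=\cos$, $w_0=1$), so the tail itself carries infinitely many simple-pole contributions and is not holomorphic at $w_0$; your first paragraph only gives convergence for $w\notin E$ and says nothing there. Similarly, at an asymptotic value $v$ you assert that ``the preimages under $f$ of points close to $v$ lie in logarithmic tracts where $|z|$ is large''. This is false: for a small disc $D\ni v$, the preimage $f^{-1}(D)$ has both unbounded (tract) components and, in general, infinitely many bounded finite-degree components, and you never account for the latter.

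The paper closes both gaps with one device. After normalising $w_0=0$, it decomposes $f^{-1}(\D)$ into its connected components $V$ and shows each local pushforward $\sigma_V$ has the asserted pole order at $0$ (Proposition~\ref{prop:doublepole} for universal-covering components, the local computation you sketch for finite-degree ones). The full pushforward $\sigma=\sum_V\sigma_V$ converges locally uniformly on $\D^*$ by the area property, and the pole order of $\sigma$ follows by a Laurent-coefficient argument: each coefficient $a_{-k}(\sigma)$ is the limit of the corresponding coefficients of the partial sums (computed by integration over a fixed circle in $\D^*$), and these vanish for $k\geq 3$ (respectively $k\geq 2$ in the simple-pole case). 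This summation step~--- passing from ``each piece has at most a double pole'' to ``the infinite sum has at most a double pole''~--- is precisely what your argument is missing.
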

\begin{proof}
  Let us again write $\rho(z) = \rho(z) \frac{\dif z^2}{z^2}$, where $\rho$ is meromorphic on $\Ch$ and satisfies
     $\rho(z)= O(1)$ $w\to\infty$. Then 
    \[ f_* q = \left(\sum_{z\in f^{-1}(w)}\frac{\rho(z)}{f'(z)^2 z^2}\right) \dif w^2 =: \sigma(w)\dif w^2 \]  
   is defined and holomorphic by the area property, except possibly at singular values of $f$ (including $\infty$) and the images of
   poles of {$q$}. Let $E$ denote this finite exceptional set; we must investigate the behaviour of $\sigma$ near a point $w_0\in E$. 

  By postcomposing with a M\"obius transformation, we may assume that $w_0=0$ and that $E\cap\D = \emptyset$. Let
   $\V$ denote the set of all connected components of $f^{-1}(\D)$. For every $V\in\V$, denote the pushforward of $q$ under the restriction $f|_V$ by 
     $\sigma_V(w)\dif w^2$. Then $\sigma$ is defined and holomorphic on $\D^*$.
   \begin{enumerate}[(a)]
    \item If $f:V\to\D^*$ is a universal covering, then $\sigma_V$ has at most a double pole at $0$ by Proposition \ref{prop:doublepole}
     (applied to $1/f$). 
    \item Otherwise, $f:V\setminus f^{-1}(0)\to \D^*$ is a finite-degree covering map, and it follows that $V$ contains exactly one element $z_0$ of
      $f^{-1}(0)$, and that $f:V\to \D$ is a branched covering map with no critical points except possibly at $z_0$. The local pushforward of 
     a meromorphic quadratic differential defined near a critical point is well-understood (and can be verified by a simple computation in local coordinates);
     see \cite[Formula~(4)]{fatoushishikura}.
     \begin{itemize}
       \item The local pushforward of a quadratic differential with at most double poles under a holomorphic map has at most a double pole; hence
           $\sigma_V$ has at most a double pole at $0$.
       \item If $q$ has at most a simple pole at $z_0$, then $\sigma_V$ has at most a simple pole at $0$. (This also follows by considering the mass of
         $q$, as 
          mentioned above.) 
       \item If $q$ is holomorphic at $z_0$ and $f:V\to\D$ is a conformal isomorphism, then clearly $f_*q$ is holomorphic at $0$. 
     \end{itemize}
   \end{enumerate}

 So we have seen that each $\sigma_V$ has at most a double pole at zero. Recall that 
      \[ \sigma(w) = \sum_{V\in\V} \sigma_V(w) \]
     on $\D^*$, where the sum converges locally uniformly. It follows (e.g.\ by comparing the Laurent series of $\sigma(w)$ with that of the
    partial sums) that $\sigma$ also has at most a double pole at zero. The claim about simple poles follows analogously.
\end{proof}

  It
   is interesting to consider when $f_* q$ acquires at most simple poles also at finite asymptotic values or at infinity. For example, 
   set $f(z)\defeq e^z$ and $q\defeq \dif z^2/z^2$. We have 
   \begin{equation} \label{eqn:exppushforward}
f_{*}q=\sum_{k=-\infty }^{\infty }\frac{\dif w^{2}}{w^{2}(\log w+2\pi ik)^{2}}=%
\frac{-\pi \dif w^{2}}{4w^{2}\sin ^{2}\frac{\log w}{2i}}=\frac{\pi \dif w^{2}}{%
w^{3}-2w^{2}+w}
\end{equation}
 (where we used the infinite partial fraction expansion of $\frac{1}{\sin^2}$.) 
   Hence $f_*q$ indeed has only a \emph{simple} pole at $\infty$.   This phenomenon 
  means that the pushforward results in a massive cancellation of mass near infinity. 

  More generally, consider the following 
   strengthening of the area property: 
  \begin{equation}\label{eqn:strongarea}
      \sum_{z\in f^{-1}(w)\setminus\D} \frac{1}{|z|^{t} |f'(z)|^{t}}<\infty 
  \end{equation}
  for some $t=2-\eps<2$. A simple estimate shows that, for $f(z)=\exp(z)$, this property is satisfied for all $t>1$. Furthermore,
  if $f$ is a Poincar\'e function for a postcritically finite polynomial $p$, then~\eqref{eqn:strongarea} holds for some $t<2$. Indeed,
  as in inequality~\eqref{eqn:poincareseries} in the proof of Theorem \ref{thm:arealinearizers}, we see that the corresponding series is bounded by 
  the Poincar\'e series for $p$ with exponent $t$. This series converges when $t>\delta(p)$, where $\delta(p)$ is the critical
  exponent for the Poincar\'e series. For postcritically finite maps, it is known that $\delta(p)<2$ coincides with the Hausdorff dimension of the Julia set
   (compare \cite{przytyckiconical}).   

 \begin{thm}[Simple poles at asymptotic values]\label{thm:simplepole}
  Let $f\in\classS$ satisfy \eqref{eqn:strongarea} for some $t<2$, and let 
    $q$ be a meromorphic quadratic differential on the Riemann sphere with at most double poles.
   Then $f_*q$ is a meromorphic quadratic differential on the Riemann 
   sphere with at most double poles at the images of the finite double poles of $q$, and 
   at most simple poles elsewhere.
\end{thm}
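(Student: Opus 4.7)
Since the strong area property implies the ordinary area property, the conclusions of Corollary~\ref{cor:doublepole} already apply: in particular, $f_*q$ is meromorphic on $\Ch$ with at most double poles. The only thing that needs strengthening is the statement that at an asymptotic value $w_0$ of $f$ (including $\infty$) which is \emph{not} the image of a finite double pole of $q$, the pushforward has at most a simple pole, rather than at most a double pole. After a M\"obius change of coordinate we may take $w_0 = \infty$; by part~(b) of the local analysis in the proof of Corollary~\ref{cor:doublepole}, the finite-degree branches of $f^{-1}$ near $\infty$ already contribute at most simple poles, so only the universal-covering tracts $V$ of $f$ over $\{|w|>R\}$ require additional attention.

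For each such tract, apply the logarithmic change of variable as in Proposition~\ref{prop:doublepole}: let $\phi\colon\HH\to U$ be the associated conformal isomorphism and $\zeta_m = \zeta_0 + 2\pi im$. The contribution of $V$ to the pushforward is
\[
 \sigma_V(w) \;=\; \frac{1}{w^2}\sum_{m\in\Z} \rho(z_m)\,\phi'(\zeta_m)^2, \qquad |\rho|\leq M \text{ on }V,
\]
where the bound on $\rho(z)=z^2q(z)$ follows from $q$ having at most a double pole at $\infty$. A standard Laurent coefficient estimate on $\{|w|>R\}$ shows that $\sigma_V(w)\,dw^2$ has at most a simple pole at $\infty$ as soon as $|\sigma_V(w)|=o(1/|w|^2)$, equivalently $\sum_m|\phi'(\zeta_m)|^2 = o(1)$ as $X:=\re\zeta_0=\log|w|\to\infty$. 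I would combine two bounds: first, since $U$ does not intersect its $2\pi i$-translates, it has width at most $2\pi$, and Koebe's theorem applied to $\phi\colon\HH\to U$ yields $\sup_m|\phi'(\zeta_m)|\leq C/X$; second, the strong area property at exponent $t<2$ gives $\sum_m|\phi'(\zeta_m)|^t<\infty$ in the infinitesimal form of Proposition~\ref{prop:infinitesimal}. A H\"older split then yields
\[
\sum_m|\phi'(\zeta_m)|^2 \;\leq\; \Bigl(\sup_m |\phi'(\zeta_m)|\Bigr)^{2-t}\sum_m|\phi'(\zeta_m)|^t \;\leq\; \Bigl(\frac{C}{X}\Bigr)^{2-t}\sum_m|\phi'(\zeta_m)|^t,
\]
reducing the problem to controlling the growth of $\sum_m|\phi'(\zeta_m)|^t$ in $X$.

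The hard part is the last estimate. The naive Koebe comparison between $\phi'(\zeta_m)$ and $\phi'(1+i\im\zeta_m)$, used in the proof of Proposition~\ref{prop:doublepole}, only gives $\sum_m|\phi'(\zeta_m)|^t\leq \const\cdot X^t\sum_m|\phi'(1+i\im\zeta_m)|^t$, which when inserted into the H\"older split yields $o(1)$ only for $t<1$. For general $t<2$ one must exploit the fact that $|\phi'(\zeta_m)|$ decays substantially faster than $C/X$ at large hyperbolic distance from $\zeta_0$. A natural approach is to show directly that the strong area sum, regarded as a function $w\mapsto |w|^t\sum_z 1/(|z|^t|f'(z)|^t)$, stays uniformly bounded (or indeed tends to zero) as $|w|\to\infty$, using the continuity of this sum on $\C\setminus S(f)$ and the logarithmic-tract structure of finite-type maps. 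Any such bound, combined with the Koebe inequality $|\phi'|\leq C/X$, yields $\sum_m|\phi'(\zeta_m)|^2 = O(X^{-(2-t)}) = o(1)$ and thus the desired simple pole at $\infty$. The analogous statement at a finite asymptotic value that is not the image of a finite double pole of $q$ follows by the same argument after M\"obius coordinate change.
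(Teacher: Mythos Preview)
Your strategy matches the paper's: reduce via the framework of Corollary~\ref{cor:doublepole} to a strengthening of Proposition~\ref{prop:doublepole}, and attack the tract estimate by combining the Eremenko--Lyubich expansion bound $|\phi'(\zeta)|\le C/X$ with convergence of the exponent-$t$ sum. Your H\"older split is equivalent to the paper's displayed inequality $\sum|zf'(z)|^{-2}\le|w|^{-\eps}\sum|zf'(z)|^{-(2-\eps)}$, and both arguments then need to control $\Sigma_t(X)=\sum_m|\phi'(X+2\pi im)|^t$ as $X\to\infty$. You are right that the Koebe comparison to $\re\zeta=1$ only gives $\Sigma_t(X)\le(CX)^t\,\Sigma_t(1)$, hence $\Sigma_2(X)=O(X^{2t-2})$, which suffices only when $t<1$; the paper's terse sketch, which asserts $|\sigma(w)|=O((\log|w|)^2/|w|^{2+\eps})$, appears to gloss over exactly this point (indeed that bound is stronger than the triangle-inequality estimate $|\sigma|\le \tilde C\,S_2(w)$ can ever yield, already for $f=\exp$).

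The genuine gap in your proposal is the final step. You say one should show that $|w|^t S_t(w)=\Sigma_t(X)$ remains bounded as $|w|\to\infty$, but the justification offered --- continuity of the sum on $\C\setminus S(f)$ together with ``the logarithmic-tract structure of finite-type maps'' --- is not an argument. Continuity gives no control at infinity, and the tract model is precisely the setting that produced the unhelpful factor $X^t$ you are trying to beat. To close the argument for $1\le t<2$ you need an actual reason why $\Sigma_t(X)$ grows more slowly than $X^{2-t}$ (for instance a subharmonicity or Hardy-type estimate for $|\phi'|^t$ along vertical lines, exploiting that the image strip has bounded width). As written, neither your proposal nor the paper's sketch supplies such an estimate.
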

The theorem follows by replacing Proposition \ref{prop:doublepole} in the proof of Corollary \ref{cor:doublepole} with the following
  observation.
  
\begin{prop}[Quadratic differentials and logarithmic singularities, II]\label{prop:simplepole}
  Suppose that $f$ and $q$ are as in Proposition \ref{prop:doublepole}, but that $f$ additionally satisfies~\eqref{eqn:strongarea}
   for some $t<2$. Then $f_*q$ has at most a simple pole at $0$. 
\end{prop}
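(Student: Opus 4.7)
The plan is to parallel the proof of Proposition~\ref{prop:doublepole}, sharpening the decay estimate by invoking the strong area hypothesis. Retaining the logarithmic change of variable, with $\phi\colon\HH\to U$ the conformal isomorphism and $\zeta_m\defeq\log w+2\pi im$, the same chain of inequalities culminating in~\eqref{eqn:doublepolefinalestimate} yields
\[
|w|^2|\sigma(w)|\leq\tilde{C}\sum_{m\in\Z}|\phi'(\zeta_m)|^2.
\]
Since $\sigma\,dw^2$ is meromorphic in a punctured neighbourhood of infinity, having at most a simple pole there is equivalent to the vanishing of the Laurent coefficient of $1/w^2$, hence to $|w|^2\sigma(w)\to 0$ as $w\to\infty$. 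The task therefore reduces to upgrading the $O((\log|w|)^2)$ bound of Proposition~\ref{prop:doublepole} on this right-hand side to $o(1)$.

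The strong area hypothesis~\eqref{eqn:strongarea}, via the same change of variable, reads
\[
\sum_{m\in\Z}|\phi'(\zeta_m)|^t \;=\; |w|^t\sum_{z\in f^{-1}(w)\setminus\D}\frac{1}{|z|^t|f'(z)|^t}\;<\;\infty,
\]
and a Koebe-based uniformity argument of the kind underlying Proposition~\ref{prop:infinitesimal} gives a bound $\sum_m|\phi'(\tilde\zeta_m)|^t\leq B$, uniform in $\arg w$, for $\tilde\zeta_m\defeq 1+i\im\zeta_m$. The natural way to couple this $\ell^t$-control with an $\ell^2$ estimate is via the H\"older interpolation
\[
\sum_m|\phi'(\zeta_m)|^2\;\leq\;\bigl(\sup_m|\phi'(\zeta_m)|\bigr)^{2-t}\sum_m|\phi'(\zeta_m)|^t,
\]
and the Koebe distortion bound $|\phi'(\zeta_m)|\leq 8(\log|w|)\,|\phi'(\tilde\zeta_m)|$ applied along horizontal hyperbolic geodesics of $\HH$.

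The hard part is precisely that inserting these bare Koebe bounds into both factors of the H\"older inequality exactly reproduces the $(\log|w|)^2$ loss already present in Proposition~\ref{prop:doublepole}, with no net gain from the stronger summability. To make the exponent $t<2$ bite, one must exploit the feature that distinguishes $\ell^t$-summability from mere $\ell^2$-summability, namely the tail decay of the sequence $(|\phi'(\tilde\zeta_m)|)_{m\in\Z}$. A plausible implementation is to split the index set into a finite ``bulk'' $\{m:|\phi'(\tilde\zeta_m)|\geq\epsilon\}$ of cardinality at most $B/\epsilon^t$, where Koebe may be applied wastefully but the number of summands is controlled, and a ``tail'' on which the bound $|\phi'(\tilde\zeta_m)|^{2-t}\leq\epsilon^{2-t}$ permits the interpolation $\sum_{\mathrm{tail}}|\phi'(\tilde\zeta_m)|^2\leq\epsilon^{2-t}B$; one then optimises $\epsilon=\epsilon(|w|)\to 0$ as $|w|\to\infty$. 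Whether this splitting (or a refinement of it that incorporates the hyperbolic geometry of $\phi$ more carefully) can be made to yield the required $o(1)$ bound without reintroducing the full $(\log|w|)^2$ factor is, in my view, the central technical obstacle of the proof.
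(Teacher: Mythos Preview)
Your proposal correctly sets up the problem and locates the difficulty, but it is explicitly incomplete: you concede that you cannot see how to beat the $(\log|w|)^2$ loss, and the bulk/tail splitting you sketch is speculative. The tool you are missing is the Eremenko--Lyubich expansion estimate \cite[Lemma~1]{alexmisha}: for a universal covering $f\colon V\to\{|w|>1\}$ of the type under consideration, one has
\[
|z\,f'(z)|\;\geq\;\const\cdot|f(z)|\cdot\log|f(z)|
\]
whenever $|f(z)|$ is large. In your logarithmic coordinates this reads $|\phi'(\zeta)|\leq\const/\re\zeta$, so in particular $\sup_m|\phi'(\zeta_m)|\leq\const/\log|w|$~--- exactly the uniform $\ell^\infty$ control that your H\"older scheme was lacking and that Koebe alone cannot deliver.

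The paper does not use your H\"older interpolation at all. Instead, setting $\eps=2-t>0$, the Eremenko--Lyubich bound yields the termwise inequality
\[
\frac{1}{|z|^2|f'(z)|^2}\;\leq\;\frac{\const}{|w|^{\eps}}\cdot\frac{1}{|z|^{2-\eps}|f'(z)|^{2-\eps}},
\]
after which one simply reruns the chain of inequalities~\eqref{eqn:doublepolefinalestimate} from Proposition~\ref{prop:doublepole} with every exponent $2$ replaced by $2-\eps$. The strong area hypothesis~\eqref{eqn:strongarea} ensures that the reference-level sum $\max_{|\omega|=e}\sum_{z\in f^{-1}(\omega)}|zf'(z)|^{-(2-\eps)}$ is finite, and the prefactor $|w|^{-\eps}$ furnishes the additional decay. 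So the argument is not an interpolation but a direct trade of exponent $2$ for exponent $2-\eps$, paid for in powers of $|w|$ courtesy of Eremenko--Lyubich; your attempted splitting becomes unnecessary once this estimate is in hand.
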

\begin{proof}
 By a well-known estimate of Eremenko and Lyubich \cite[Lemma~1]{alexmisha}, we have
    \[ |z\cdot f'(z)| \geq \const\cdot |f(z)|\cdot \bigl| \log|f(z)| \bigr|\]
   for such a universal covering {when $|f(z)|$ is sufficiently large}.
   Hence,
    \[ \sum_{z\in f^{-1}(w)\setminus\D} \frac{1}{|z|^{2} |f'(z)|^{2}} 
          \leq \frac{1}{|w|^{\eps}} \cdot 
\sum_{z\in f^{-1}(w)\setminus\D} \frac{1}{|z|^{2-\eps} |f'(z)|^{2-\eps}}, \]
    provided that $\eps$ is chosen such that the sum on the left converges. (This is possible
     by~\eqref{eqn:strongarea}.) 
   In particular, if we replace the exponent $2$ by the exponent $2-\eps$ in the estimate~\eqref{eqn:doublepolefinalestimate} in the proof of Proposition~\ref{prop:doublepole}, we obtain 
    \[ |\sigma(x)| \leq  O((\log |w|)^2/|w|^{2+\eps}) = o(1/|w|^2), \] 
   showing that $f_* q$ has at most a simple pole.  
\end{proof}

\begin{remark}[Remark 1] Let $f$ be 
    the Poincar\'e function
    of a postcritically finite polynomial (or rational map) $p$ at a 
    repelling periodic point. Let
    $q=\dif z^2/z^2$; then 
    $q$ is invariant under the map $\lambda$ (multiplication by the multiplier of the
    repelling cycle). Using the functional relation~\eqref{eqn:functionalequation}, we see that
       \[ p_* f_* q = f_* \lambda_* q = f_* q, \]
     so $f_* q$ is pushforward invariant under $p$. (Note that
     the pushforward considered in~\eqref{eqn:exppushforward} is precisely of this type, using
     $f(z)=\exp(z)$ and $p(z)=z^2$.)

    We observed in Theorem \ref{thm:simplepole} that $f_* q$ has at most a 
    {\emph{simple}} pole
    at infinity (and at most double poles elsewhere). 
   The set of non-trivial
    quadratic differentials that are pushforward invariant under
    a postcritically finite polynomial or rational map is an intriguing
    object. {Unless the rational map in question is a 
    Latt\`es example, pushing forward a quadratic differential with at most simple poles
    eventually results in some cancellation of mass. (This is infinitesimal
    Thurston contraction \cite[Lemma~1 on p.~272]{douadyhubbardthurston}; compare also \cite[Lemma~3]{fatoushishikura}.) Hence a push-forward
    invariant quadratic differential}
    must have at least a double pole somewhere. Some quadratic differentials
    of this type can be obtained from algebraic expressions between
    multipliers of repelling cycles; {the above construction yields another
    set of examples.} It seems interesting
    to study their properties.
\end{remark}

\begin{remark}[Remark 2]
  Suppose that $f\in\B$ satisfies the area property (or its stronger variation~\eqref{eqn:strongarea}). 
    If $q$
   is a meromorphic quadratic differential with at most double pole at $\infty$, then 
   Propositions \ref{prop:doublepole} and \ref{prop:simplepole} still imply that 
    $f_*q$ is defined for sufficiently large values of $w$ and has at most a double (resp.\ simple pole) at $\infty$. However, there is no reason
    to expect the pushforward to be globally meromorphic. 
\end{remark}

\section{Further comments and questions}\label{sec:remarks}
 \subsection*{Meromorphic functions} We have stated our theorems, for the most part, for
   entire rather than meromorphic functions, as it is known classically that invariance of
   order can fail in the latter case. However, essentially all our results also apply, with the
   same proof, to
   meromorphic functions. (We remark that the area property was defined for
   entire functions, but it extends
   verbatim to the meromorphic case.) 
   In particular, Corollary \ref{cor:doublepole} and Theorem \ref{thm:simplepole} extend to meromorphic functions with finite
   singular sets, using exactly the same proof.

  An exception is given by Theorem \ref{thm:arealinearizers}, concerning Poincar\'e functions of polynomials.
 In the proof of this theorem, we used the fact that $p$ has nonempty Fatou set,
   but did not otherwise rely on the fact that $p$ is a polynomial. Hence Theorem
   \ref{thm:arealinearizers} holds 
   also for a Poincar\'e function $f$ of a transcendental entire function
    $h$ or of a rational function $h$, provided that the Fatou set of $h$ is non-empty. However,
    in the case where $F(h)=\emptyset$, it is possible for the area property to fail. Indeed, 
   consider the case where $h$ is a \emph{Latt\`es map}; that is, a (postcritically finite)
    rational function obtained from a linear toral endomorphism
   via projection to the Riemann sphere. By definition, the linearizer $f$ is an elliptic
   function, namely the projection from the torus in question to the sphere. Any
   doubly-periodic set of positive area has infinite cylindrical area, since 
     $\sum_{\lambda \in \Lambda\setminus\{0\}} 1/|\lambda|^2 = \infty$
     for any lattice $\Lambda\subset \C$. Hence $f$ does not have the 
   area property. 

 The general answer turns out to depend on the measurable dynamics of the function $h$.
   {Indeed, it is well-known that, for a Latt\`es map, almost every orbit is dense in the Riemann
    sphere. As the following result, which extends Theorem \ref{thm:arealinearizers}, shows, this is what causes
    the failure of the area property.}

\begin{thm}[Poincar\'e functions of rational or transcendental entire functions]
    Suppose that the transcendental meromorphic function $f$ is a Poincar\'e function of some
    entire or rational function 
     $h$. 

If $F(h)\neq \emptyset$, then $f$ has the area property {if and only if $h$ has no Siegel discs}. 
 Otherwise, 
    $f$ has the area property if and only if 
     $\dist(h^n(z),\P(h))\to 0$ for almost all $z\in\C$.
\end{thm}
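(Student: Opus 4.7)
The plan is to mimic the proof of Theorem~\ref{thm:arealinearizers}. As there, I would begin with the change of variables $z=\lambda^n\zeta$, $\zeta\in A$, where $A$ is the fundamental annulus on which $f$ is injective. The functional equation $h\circ f=f\circ\lambda$ together with the Koebe Distortion Theorem applied to $f$ on $A$ (and the fact that $f(A)$ is a bounded subset of $\C^{*}$ on which spherical and Euclidean area are comparable) reduce the area integral~\eqref{eqn:area} to the question
\[
\sum_{n=1}^{\infty}\area_{\operatorname{sph}}\!\bigl(f(A)\cap h^{-n}(K)\bigr)\;\asymp\;\int_{f(A)}\#\{n\geq 1:h^n(z)\in K\}\,d\area_{\operatorname{sph}}(z)\;<\;\infty
\]
for every compact $K\subset\C\setminus\P(h)$, with $\area_{\operatorname{sph}}$ denoting spherical area on $\Ch$.

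If $F(h)\neq\emptyset$, then $J(h)$ has empty interior, and each connected component of $\C\setminus\P(h)$ meets $F(h)$. By Proposition~\ref{prop:infinitesimal} it suffices to verify~\eqref{1overzf} at one $w$ per component, and we may take $w\in F(h)$. The remainder of the argument is then identical to that of Theorem~\ref{thm:arealinearizers}: if $w$ is not in a Siegel disc one finds a small disc $D\ni w$ with $h^{-n}(D)\cap D=\emptyset$ for all $n\geq 1$, so the backward orbit of $D$ has total spherical area at most $\area_{\operatorname{sph}}(\Ch)$; if $w$ lies in a Siegel disc of $h$ the irrational rotation on that disc yields a sequence $z_k\in f^{-1}(w)$ with $|z_k|\to\infty$ and $|z_k|\cdot|f'(z_k)|$ uniformly bounded, forcing~\eqref{1overzf} to diverge.

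For $F(h)=\emptyset$, so $J(h)=\Ch$, I would invoke the ergodic dichotomy for rational maps with $J(h)=\Ch$ (Lyubich) and its analogue for transcendental entire functions with empty Fatou set: either $h$ is \emph{conservative}, so that almost every orbit is equidistributed, or $h$ is \emph{dissipative}, so that $\omega(z)\subset\P(h)$ for a.e.\ $z\in\C$ (equivalently $\dist(h^n(z),\P(h))\to 0$ a.e.). In the conservative case, Poincar\'e recurrence gives $\#\{n:h^n(z)\in K\}=\infty$ a.e.\ on $f(A)$ whenever $K$ has positive area, so the area property fails along with the distance condition, in agreement with the theorem. Conversely, if the area property holds, then Fubini combined with a countable exhaustion $K_j\nearrow\C\setminus\P(h)$ shows that $\#\{n:h^n(z)\in K_j\}<\infty$ a.e.\ on $f(A)$ for every $j$, hence $\dist(h^n(z),\P(h))\to 0$ a.e.\ on $f(A)$; ergodicity of $h$ then extends this to almost every $z\in\C$.

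The main obstacle will be the remaining implication: in the dissipative case, that the integral in the first display is \emph{finite}, not merely a.e.\ finite-valued. The strategy is to use the Hopf decomposition associated with dissipativity, which provides a wandering set $W\subset\Ch\setminus\P(h)$ whose preimages $\{h^{-n}(W)\}_{n\geq 0}$ are essentially disjoint and cover $\Ch$ up to measure zero, so that $\sum_n\area_{\operatorname{sph}}(h^{-n}(W))\leq\area_{\operatorname{sph}}(\Ch)<\infty$. Decomposing a compact $K\subset\C\setminus\P(h)$ according to this tiling and estimating the spherical area of each $h^{-n}(K)$ in terms of the corresponding preimages of $W$ should reduce the total to a convergent expression; carrying out this bookkeeping rigorously, and verifying that the dichotomy has the correct form for transcendental entire $h$ with $F(h)=\emptyset$, is where the main technical difficulty lies.
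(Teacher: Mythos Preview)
Your reduction of the cylindrical-area integral to $\int_{f(A)}\#\{n\ge 1:h^n(w)\in K\}\,d\area_{\operatorname{sph}}(w)$ is correct and equivalent to the paper's formulation via the Poincar\'e series at exponent~$2$. The case $F(h)\neq\emptyset$, and the implication ``conservative $\Rightarrow$ area property fails'' when $F(h)=\emptyset$, match the paper (which invokes the same density-of-orbits dichotomy you cite).

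The gap is exactly where you place it, but the Hopf-decomposition route does not close it. A wandering set $W$ bounds $\#\{n:h^n(w)\in W\}$, not $\#\{n:h^n(w)\in K\}$; there is no a~priori control on how many tiles $h^{-m}(W)$ meet a given compact $K$, and decomposing $K$ along that tiling simply reproduces the original sum rather than producing a convergent majorant. The wandering-set structure carries no quantitative information about the number of visits to $K$, so the ``bookkeeping'' you allude to has no input to work with.

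The paper supplies that input via \emph{nice sets} (Rivera-Letelier for rational maps, Dobbs in general): a small simply-connected $U\ni w$ with $\overline U\subset\C\setminus\P(h)$ and $h^n(\partial U)\cap U=\emptyset$ for all $n\ge 1$. The first-return inverse branches then form a conformal IFS on $U$, each branch extending univalently to a fixed simply-connected neighbourhood of $\overline U$ inside $\C\setminus\P(h)$, so the distortion is uniformly bounded by some constant $C$. The dissipativity hypothesis yields a set $E\subset U$ of relative measure $\delta>0$ that never returns; by bounded distortion, every level-one piece $\phi_i(U)$ contains a copy $\phi_i(E)$ of relative measure at least $\delta/C$ that never returns again. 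Iterating, the total area of the level-$n$ pieces is at most $(1-\delta/C)^{n}\area(U)$, hence summable. This definite contraction at every return is precisely what the wandering-set approach cannot manufacture.
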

\begin{proof}[Sketch of proof]
 {By the above remarks, it suffices to consider the case where $F(h)$ is empty.}
  As in the 
     proof of Theorem~\ref{thm:arealinearizers}, and as remarked after~\eqref{eqn:strongarea},
    the area property is equivalent to the
    question over whether the Poincar\'e series of $h$ converges for the exponent $2$ at 
    $w\notin \P(h)$. The fact that this is the case
    if and only if $\dist(h^n(z),\P(h))\to 0$ is surely known, at least for rational functions; 
   for completeness, we sketch a proof below. 

  If $\P(h)=\C$, then there is nothing to prove (since the area property only makes statements about sets \emph{disjoint} from the singular set of 
   $f$, it holds trivially). Otherwise, let $w\in\C\setminus \P(h)$. 
    By 
     the  existence of \emph{nice sets}
     (proved by Rivera in the rational case and Dobbs \cite{dobbsnice} in the general case),   
  there is a small simply-connected open set $U\subset\C\setminus \P(h)$ around $w$, 
    such that $h^n(\partial U)\cap \partial U = \emptyset$ for all $n\geq 1$. 
  The pullbacks of $U$ along first returns to $U$ form 
    an infinite conformal iterated function system (IFS). It is easy to see that the Poincar\'e series for $f$ corresponding to exponent $2$ will
    converge if and only if the sum, over all levels $n$, of the total area of the {sets of level $n$} in this IFS converges. 
   {Here by a ``set of level $n$'', we mean the result of applying a composition of $n$ of the contractions defining the 
   IFS. In other words, the sets of level $n$ are precisely the domains of the $n$-th return map to $U$.}

  If $\dist(h^n(z),\P(h))\to 0$ for almost all $z\in\C$, then there is a positive measure set of 
    points in $U$ that never return to $U$ under iteration. 	It follows that the areas in question decrease geometrically, and hence
    the Poincar\'e series converges. On the other hand, if there is a positive measure set of points with $\limsup \dist(h^n(z),\P(h))\not\to 0$, then
    by a well-known argument almost every orbit is dense (see e.g.\ \cite[Theorem~3.3]{linefields}). In particular, the level $n$ sets
    of the IFS have full area in $U$, and hence the Poincar\'e series diverges.
\end{proof}

\subsection*{The area property and measurable dynamics}
  In several places, our work suggests close connections between the area property
   and measurable dynamics. One such connection concerns the case of Poincar\'e functions, where we have
   seen that the measurable dynamics of the original map (here: the Poincar\'e series), are reflected in the value distribution (here: the area property and its 
   generalizations)
   for the linearizer. We remark that Eremenko and Sodin \cite{eremenkosodin} used this type of connection to give a new proof of the 
   existence of measures of maximal entropy for rational functions.

  Perhaps more interestingly, it appears that there are connections between conditions such as the area property and its
   stronger variant~\eqref{eqn:strongarea} and the measurable dynamics of the transcendental function itself. Indeed, we already saw that 
   the same construction that leads to a hyperbolic entire function with full hyperbolic dimension also yields a function for which the area property
   fails. Furthermore, such connections are suggested by work of Urba\'nski with several collaborators (see e.g.\ \cite{urbanskizdunik1,mayerurbanski}) 
   on the existence of conformal and invariant measures, and Hausdorff dimension of radial Julia sets, for finite-order entire and meromorphic functions.
   
 In particular, in \cite{mayerurbanski}, a class of hyperbolic meromorphic functions of finite order
   is treated that satisfy a strong regularity of growth property, known
   there as the \emph{balanced} condition. In the
   case of a finite-order entire function $f\in\B$, the function is balanced if and only if
   \begin{equation}\label{eqn:balanced} |f'(z)| \asymp (1+|z|)^{\rho(f)-1}\cdot (1+|f(z)|) \end{equation}
   for all $z\in J(f)$ \cite[Lemma~3.1]{mayerurbanski}. 
   It follows from this condition that~\eqref{eqn:strongarea} holds for all $t>1$, and hence all of these functions have the area
   property and the conclusions of Theorem \ref{thm:simplepole}. In particular, this implies that Poincar\'e functions of postcritically finite
   polynomials with non-smooth Julia sets are \emph{not} balanced in the sense of Mayer and Urba\'nski. Indeed, the critical exponent $c$ of the Poincar\'e series of such a polynomial coincides 
 with the Hausdorff dimension of its Julia set, which is larger than one. It follows that
   the Poincar\'e function does not satisfy~\eqref{eqn:strongarea} for $1<t<c$. 

 It is plausible that the minimal exponent in~\eqref{eqn:strongarea} is
   connected to the concept of \emph{eventual hyperbolic dimension}, which is defined in analogy with \cite[Section 5]{escapingdim} as follows:
 \[ \edim_{\hyp}(f) \defeq \lim_{R\to\infty} \sup\{\dim K\colon  K \text{ hyperbolic}, \min_{z\in K}|z|\geq R\}. \]
  (Recall that a set is \emph{hyperbolic} for $f$ if it is compact, invariant and expanding.)

\subsection*{Classes of functions with the area property}
  Given the many interesting connections between the area property and interesting applications in complex dynamics
   and function theory, it makes sense to identify classes of entire transcendental functions $f\in\B$ having the area property.
   As mentioned above, the balanced condition of Mayer and Urba\'nski \cite{mayerurbanski} gives rise to such a class of functions, but
   it is rather restrictive; in particular, it does not include Poincar\'e functions, which have been our primary source of non-trivial examples.
   We believe that there should be a natural \emph{geometric} condition on the tracts of an entire transcendental
   function $f\in\B$ that covers all balanced functions in this class (in the sense of~\eqref{eqn:balanced}) as well as 
   all Poincar\'e functions of postcritical polynomials. This condition should
    ensure that the area property holds and that quadratic differentials  with at most a double pole push forward to at most
   a simple pole at infinity. Furthermore, hyperbolic functions in the corresponding quasiconformal equivalence class should have hyperbolic
   dimension strictly less than two. As this
   question takes us beyond the scope of this note, it will be left to a subsequent paper.

\bibliographystyle{amsalpha}
\bibliography{../../Biblio/biblio}

\end{document}